\documentclass[a4paper]{amsart}
%%%%%%%%%%%%%%%%%%%%%%%%%%%%%%%%%%%%%%%%%%%%%%%%%%%%%%%%%%%%%%%%%%%%%%%%%%%%%%%%%%%%%%%%%%%%%%%%%%%%%%%%%%%%%%%%%%%%%%%%%%%%%%%%%%%%%%%%%%%%%%%%%%%%%%%%%%%%%%%%%%%%%%%%%%%%%%%%%%%%%%%%%%%%%%%%%%%%%%%%%%%%%%%%%%%%%%%%%%%%%%%%%%%%%%%%%%%%%%%%%%%%%%%%%%%%
\usepackage{amsfonts}
\usepackage{amsmath}
\usepackage{amssymb}
\usepackage{graphicx}

\setcounter{MaxMatrixCols}{10}
%TCIDATA{OutputFilter=LATEX.DLL}
%TCIDATA{Version=5.50.0.2960}
%TCIDATA{<META NAME="SaveForMode" CONTENT="1">}
%TCIDATA{BibliographyScheme=Manual}
%TCIDATA{LastRevised=Monday, May 06, 2013 09:58:04}
%TCIDATA{<META NAME="GraphicsSave" CONTENT="32">}
%TCIDATA{Language=American English}

\providecommand{\U}[1]{\protect\rule{.1in}{.1in}}
\newtheorem{theorem}{Theorem}
\theoremstyle{plain}

\newtheorem{conjecture}{Conjecture}
\newtheorem{corollary}{Corollary}

\newtheorem{definition}{Definition}

\newtheorem{lemma}{Lemma}

\newtheorem{proposition}{Proposition}
\newtheorem{remark}{Remark}

\numberwithin{equation}{section}
\input{tcilatex}

\begin{document}
\title[Aubry-Mather theorem for elastic chains]{The Aubry-Mather theorem for
driven generalized elastic chains}
\author{Sini\v{s}a Slijep\v{c}evi\'{c}}
\address{Department of Mathematics, Bijeni\v{c}ka 30, Zagreb, Croatia}
\email{slijepce@math.hr}
\urladdr{}
\date{3 May 2013}
\subjclass[2000]{ 34C12, 34C15, 34D45, 37C65, 37L60, 35B40}
\keywords{Aubry-Mather theory, Frenkel-Kontorova model, twist maps,
attractors, Poincar\'{e}-Bendixson theorem, reaction-diffusion equation,
synchronization, uniformly sliding states, minimizing measures.}

\begin{abstract}
We consider uniformly (DC)\ or periodically (AC)\ driven generalized
infinite elastic chains (a generalized Frenkel-Kontorova model) with
gradient dynamics. We first show that the union of supports of all the
invariant measures, denoted by $\mathcal{A}$, projects injectively to a
dynamical system on a 2-dimensional cylinder. We also prove existence of
ergodic invariant measures supported on a set of rotationaly ordered
configurations with an arbitrary (rational or irrational) rotation number.
This shows that the Aubry-Mather structure of ground states persists if an
arbitrary AC or DC\ force is applied. The set $\mathcal{A}$ attracts almost
surely (in probability) configurations with bounded spacing. In the DC case, 
$\mathcal{A}$ consists entirely of equilibria and uniformly sliding
solutions. The key tool is a new weak Lyapunov function on the space of
translationally invariant probability measures on the state space, which
counts intersections.
\end{abstract}

\maketitle

\section{Introduction}

We consider the equation%
\begin{equation}
\frac{d}{dt}%
u_{j}(t)=-V_{2}(u_{j-1}(t),u_{j}(t))-V_{1}(u_{j}(t),u_{j+1}(t))+F(t)\text{%
,\quad }j\in \mathbb{Z}  \label{r:FK}
\end{equation}%
such that $V:\mathbb{R}^{2}\rightarrow \mathbb{R}$ is $C^{2}$, periodic\ ($%
V(u+1,v+1)=V(u,v)$), and satisfies the twist condition $V_{12}(u,v)\leq
-\delta <0$ for some fixed $\delta \in \mathbb{R}$.

We assume that $F$ is either constant (the DC\ case) or 1-periodic in $t$
(the AC\ case). In the AC case, we further assume that $V$, $F$ are real
analytic.

\subsection{\textbf{The background and motivation. }}

The case $F=0$ is the standard setting of the well-known Aubry-Mather theory
(\cite{Baesens98}, \cite{Katok05}, Section 9.3), as the solutions of $%
du/dt=0 $ correspond to orbits of an area-preserving twist map $f_{V}$ on a
cylinder whose generating function is $V$. The Aubry-Mather theory describes
minimizing configurations/orbits and measures, and so simultaneously
characterizes physically important "ground states" of (\ref{r:FK}) and
describes "KAM-circles" and what is left of them for $f_{V}$.

Our goal is to contribute towards the description of the dynamics of (\ref%
{r:FK}) on the entire state-space, allowing a fairly general family of
initial conditions. We are, however, inspired by results on asymptotics of
one-dimensional reaction-diffusion equations (\cite{Fiedler89}, \cite{Joly10}%
), their braid dynamics (\cite{Ghrist09}), stability of synchronization
phenomena (\cite{Gallay11});\ in the Hamiltonian dynamics setting the\
Mather's measure-theoretical approach to minimizing orbits and its
extensions (\cite{Mather89}); PDE extensions of Aubry-Mather theory (\cite%
{Llave09});\ results on spatio-temporal entropy of extended PDE's (\cite%
{Mielke09}, \cite{Zelik03}); all related to what is actually known on the
behavior of (\ref{r:FK}) either numerically, physically (\cite{Braun:04}) or
analytically (\cite{Baesens98}, \cite{Baesens05}, \cite{Hu05}, \cite{Qin10}, 
\cite{Qin11}). Before stating precisely the actual results, we will try to
put them in a wider context of these topics.

In the "standard" case $V(u,v)=(u-v)^{2}/2-W(u)$, where $W(u+1)=W(u)$ and $W:%
\mathbb{R}\rightarrow \mathbb{R}$ is $C^{2}$, the equation (\ref{r:FK}) is a
spatial discretization of the scalar reaction-diffusion equation%
\begin{equation*}
u_{t}=u_{xx}+W^{^{\prime }}(u)\text{,}
\end{equation*}%
and it is closely related to a more general reaction-diffusion equation%
\begin{equation}
u_{t}=u_{xx}+f(x,u,u_{x}),  \label{r:rfd}
\end{equation}%
where $f$ is $1$-periodic in the first coordinate. Both (\ref{r:FK})\ and\ (%
\ref{r:rfd})\ are scalar and order-preserving (or monotone, or cooperative,
see \cite{Smith:96}). The well-known Fiedler and Mallet-Paret Poincar\'{e}%
-Bendixson theorem \cite{Fiedler89} showed that the asymptotics of (\ref%
{r:rfd}) on any bounded domain is 2-dimensional, which is a basis of a
relatively complete understanding of dynamics of (\ref{r:rfd}) on bounded
domains (\cite{Joly10} and references therein). The key tool they applied is
the intersection-counting function $(u,v)\rightarrow z(u-v)$, which is a
discrete Lyapunov function. That means that $t\mapsto z(u(t)-v(t))$ is
non-increasing along two solutions of (\ref{r:rfd}).\ Furthermore, if $%
u(t_{0})-v(t_{0})$ has a singular zero (we also say that $u(t_{0})$ and $%
v(t_{0})$ intersect non-transversally), $z(u(t)-v(t))$ strictly drops at $%
t=t_{0}$.

These ideas have not in our knowledge been applied to the equation (\ref%
{r:rfd}) on the entire $\mathbb{R}$, considered as an "extended system"
(i.e. without assuming $u(x)\rightarrow 0$ as $x\rightarrow \infty $). The
reason is that the intersection-counting function $z$ is typically not
finite when $u,v$ are defined on the entire $\mathbb{R}$, thus all
bounded-domain tools fail. We believe that the right approach in the
extended-domain case is to consider the extension of the
intersection-counting function $z$ to the space of translationally invariant
measures on the state space.

In this paper we pursue this idea for a technically simpler discrete-space
system (\ref{r:FK}), and consider a function%
\begin{equation*}
(\mu ^{1},\mu ^{2})\longmapsto Z(\mu ^{1},\mu ^{2})
\end{equation*}%
which counts the average number of intersections of configurations with
respect to two translationally invariant probability measures on the space
of configurations $(u_{i})_{i\in \mathbb{Z}}$. We will show that $t\mapsto
Z(\mu ^{1}(t),\mu ^{2}(t))$ is a weak Lyapunov function (for details see\
Section \ref{s:measure}), where $\mu ^{1}(t),\mu ^{2}(t)$ are evolutions of
measures with respect to (\ref{r:FK}). We believe these ideas can also be
applied also to (\ref{r:rfd}) and give fuller insight on the dynamics of (%
\ref{r:rfd}) on unbounded domain. In particular, we conjecture that all our
results (with appropriate rephrasing)\ hold also for (\ref{r:rfd}).

Van der Berg, Ghrist, Vandervorst and W\'{o}jcik studied in several papers
topological "braid" properties of intersections of solutions of (\ref{r:rfd}%
) on bounded domains and its spatial discretization (\cite{Ghrist09} and
references therein). Our approach may lead to extension of their ideas to
unbounded domains.

It has been noted that synchronized solutions of the extended
reaction-diffusion equation in more spatial dimensions ($\geq 2$) are often
stable and locally attracting (see e.g. \cite{Gallay11} for recent results
and references). We argue that it is possible to study this phenomenon also
in the spatial dimension 1. As compared to \cite{Gallay:01}, we also allow
unbounded solutions $u_{i}(t)$ (by identifying $u$ and $u+1$ and considering
the dynamics in an appropriate quotient space), and also study
non-autonomous systems (AC\ forcing). We eventually reduce our dynamics to a
2d flow (DC case)\ or map (AC\ case) with non-trivial dynamics (the
spatially $1d$ dynamics in \cite{Gallay11} reduces to a trivial $1d$ flow).
A heuristic explanation for stability of $1d$ synchronization is now as
follows:\ the synchronized solutions are the solutions in the support of
invariant measures~$\mu $ such that the self-intersecting function $Z(\mu
,\mu )$ is equal to zero (we consider measures invariant both for spatial
translations and time evolution). As the function $t\mapsto Z(\mu (t),\mu
(t))$ is non-decreasing, such solutions are expected to be at least Lyapunov
stable. We develop a rigorous approach to this idea.

The Aubry-Mather theory of ground states of (\ref{r:FK})\ in the case $F=0$
has been extended to more dimensional lattices and PDEs (\cite{Llave09} and
references therein). In our knowledge, generalizations typically deal with
existence of ground states with an arbitrary generalized rotation number\
(rational or irrational). In the PDE setting, this leads to construction of
various solutions of stationary PDEs. In our view, it is somewhat unexpected
that the Aubry-Mather stationary structure seems to be preserved in a not
entirely obvious way for an evolution equation (\ref{r:FK}) with arbitrarily
strong, and even time-periodic forcing.

Finally, let us come back to the actual dynamics of (\ref{r:FK}). The
Frenkel-Kontorova model is one of the most studied physical models, as it
reasonably realistically models a variety of physical phenomena, and is
perhaps a simplest model with sufficiently rich structure allowing lots of
"metastable states", various phase transitions such as "pinning-depinning",
etc. (\cite{Braun:04} and references therein). It has been long noted in the
physics literature that the dynamics of (\ref{r:FK}) seems to be attracted
to low-dimensional invariant manifolds, or to spatially synchronize. Baesens
and MacKay proved that in the DC case spatially periodic initial conditions
converge either to equilibria or to uniformly sliding solutions\ (\cite%
{Baesens98}, \cite{Baesens05};\ see below for details). Hu, Bambi and Qin
extended some of those results to show existence and properties of spatially
periodic, ordered solutions in the AC\ case (\cite{Hu05}), and\ Qin studied
existence and properties of uniformly sliding solutions with both rational
and irrational rotation number in the DC\ case (\cite{Qin10}, \cite{Qin11}%
).\ 

Here we attempt to give a broader perspective on the dynamics of (\ref{r:FK}%
) on a large state-space, and generalize most of the mentioned results.
Firstly, we consider dynamics on the space of configurations of bounded
spacing (configurations $(u_{i})_{i\in \mathbb{Z}}$ such that $\sup_{i\in 
\mathbb{Z}}|u_{i+1}-u_{i}|<\infty $), which is more general than the
configurations of bounded width (i.e. such that $\sup_{i\in \mathbb{Z}%
}|u_{i}-i\rho |<\infty $ for some $\rho \in \mathbb{R}$) rigorously studied
so far. This is important, as it includes configurations connecting two
configurations with different rotation number, and various other relevant
situations. We then show existence of a 2d invariant set of configurations
in both AC and DC\ case which in a physically meaningful way attracts
configurations with bounded spacing. Finally, we show existence of ordered
configurations with an arbitrary (rational or irrational)\ rotation number.

\subsection{\textbf{The statement of results.}}

The state space for the dynamics of (\ref{r:FK}) of configurations of
bounded spacing will be denoted by $X$, and defined precisely as follows:%
\begin{eqnarray*}
K_{n} &=&\{u\in \mathbb{R}^{Z}\text{, such that }\sup_{i\in \mathbb{Z}%
}|u_{i+1}-u_{i}|\leq n\}\text{,} \\
X &=&\tbigcup_{n=1}^{\infty }K_{n},
\end{eqnarray*}%
with the induced product topology. We will use the quotient sets $\mathcal{X}%
=X/R$, $\mathcal{K}_{n}=K_{n}/R$, where $Ru=u+1$ is a translation commuting
with (\ref{r:FK}) (for details see Section \ref{s:measure}).

The system (\ref{r:FK}) generates a continuous global semiflow and a local
flow on $\mathcal{X}$ (see Section 4), which we denote by $\varphi $ (or $%
\varphi ^{t}$ for a given $t$). Of key importance will be the translation $%
T:X\rightarrow X$ defined naturally as $(Tu)_{i}=u_{i-1}$, commuting with (%
\ref{r:FK}) and $\varphi $.

The Borel probability measures on $\mathcal{X}$ will be called \textit{%
measures with bounded spacing}. We will in particular study measures with
bounded spacing, invariant with respect to both the translation $T$ and the
semilflow $\varphi ^{t}$, called $(\varphi ,T)$-invariant. In the DC\ case,
the measures will be invariant with respect to $\varphi ^{t}$ for all $t\in 
\mathbb{R}$, and in the AC\ case with respect to $t\in \mathbb{Z}$. Denote
the sets of such measures on $\mathcal{K}_{n},\mathcal{X}$ by $\mathcal{E}(%
\mathcal{K}_{n})$, $\mathcal{E}(\mathcal{X})$. Let%
\begin{equation}
\mathcal{A}=\cup _{\mu \in \mathcal{E}(\mathcal{X})}\text{supp}(\mu )\text{,}
\label{r:ea}
\end{equation}%
where supp$(\mu )$ denotes the support of $\mu $. Our first main result is
the following:

\begin{theorem}
\label{t:main1}There is a continuous injective projection $\pi :\mathcal{A}%
\rightarrow \mathbb{T}^{1}\times \mathbb{R}$.
\end{theorem}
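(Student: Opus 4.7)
The plan is to define the projection by $\pi([u]) = ([u_{0}]_{\mathbb{T}^{1}},\, u_{1}-u_{0})$. This is well-defined on $\mathcal{X}=X/R$ because the $R$-action adds $1$ to every coordinate simultaneously, so $u_{0}\bmod 1$ and every difference $u_{j}-u_{k}$ are $R$-invariant; the second coordinate ranges over all of $\mathbb{R}$ because $X$ only imposes that the spacing be finite, not uniformly bounded across $X$. Continuity follows immediately from the product topology used to define $\mathcal{X}$. Hence the whole content of the theorem is the injectivity of $\pi|_{\mathcal{A}}$.

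For injectivity I would argue by contradiction. Suppose $u,v\in\mathcal{A}$ are distinct with $\pi(u)=\pi(v)$. Choosing lifts, we may assume $u_{0}=v_{0}$, $u_{1}=v_{1}$, but $u_{k}\neq v_{k}$ for some $k$. Since $u\in\mathrm{supp}(\mu)$, $v\in\mathrm{supp}(\nu)$ for some $\mu,\nu\in\mathcal{E}(\mathcal{X})$, both orbits $\varphi^{t}u,\varphi^{t}v$ are defined for all $t\in\mathbb{R}$. Set $w_{i}(t):=u_{i}(t)-v_{i}(t)$. The sequence $w(0)$ has two consecutive zeros at $i=0,1$, the discrete analogue of a non-transverse intersection. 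The twist hypothesis $V_{12}\leq-\delta<0$ then controls the infinitesimal dynamics: since $V_{1}(u_{0},u_{1})=V_{1}(v_{0},v_{1})$, one has $\dot{w}_{0}(0)=V_{2}(v_{-1},v_{0})-V_{2}(u_{-1},u_{0})$, which has sign opposite to $u_{-1}-v_{-1}$ (strictly, whenever $u_{-1}\neq v_{-1}$); a symmetric identity gives $\dot{w}_{1}(0)$ in terms of $u_{2}-v_{2}$. Combined with the standard discrete Sturmian principle for cooperative lattice systems, this forces the sign-change count of $\{w_{i}(t)\}_{i\in\mathbb{Z}}$ to strictly drop through $t=0$ (or else $u_{-1}=v_{-1},u_{2}=v_{2}$, whereupon one iterates the argument outward and concludes $u\equiv v$ by continuity of $\pi$ and the bounded-spacing assumption).

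The main obstacle is to convert this pointwise drop into a genuine contradiction, because the sign-change count on all of $\mathbb{Z}$ may be infinite and $u,v$ need not lie in the support of a single measure. Here I would use the key tool announced in the introduction: lift to the product system on $\mathcal{X}\times\mathcal{X}$ under $\varphi\times\varphi$ and $T\times T$, and, via Krylov--Bogolyubov applied to ergodic averages along the orbit of $(u,v)$, produce a joint invariant probability measure $\rho$ supported in the orbit closure of $(u,v)$ and with marginals in $\mathcal{E}(\mathcal{X})$. The weak Lyapunov function $Z$ of Section \ref{s:measure}, reinterpreted as the $\rho$-average of a localised sign-change count, is non-increasing under $\varphi\times\varphi$ and hence \emph{constant} in $t$ by $\rho$-invariance. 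But $T\times T$-invariance propagates the non-transverse pattern at $(u,v)$ to a set of positive $\rho$-measure, where the local analysis above forces a strict drop. This contradicts constancy, so $u=v$. The delicate step, on which I expect most of the work to rest, is the precise measure-theoretic formalisation of the ``strict drop at a non-transverse intersection'' --- essentially the technical heart of the new weak Lyapunov function advertised in the abstract.
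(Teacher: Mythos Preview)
Your overall strategy---derive a contradiction from a non-transversal intersection in the support of an invariant measure via the Lyapunov function $Z$---is exactly the paper's. Two points, however, are not in order.

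\textbf{The Krylov--Bogolyubov detour is both unnecessary and gappy.} The measure $\rho$ you build from ergodic averages along the orbit of $(u,v)$ is supported in the orbit closure, but there is no reason $(u,v)$ itself lies in $\mathrm{supp}(\rho)$; hence your ``$T\times T$-invariance propagates the non-transverse pattern at $(u,v)$ to a set of positive $\rho$-measure'' is unjustified. The paper avoids this entirely: since $u\in\mathrm{supp}(\mu^{1})$ and $v\in\mathrm{supp}(\mu^{2})$ with $\mu^{1},\mu^{2}\in\mathcal{E}(\mathcal{X})$, the convex combination $\mu=(\mu^{1}+\mu^{2})/2$ is again $(\varphi,T)$-invariant and has \emph{both} $u$ and $v$ in its support. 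One then works with $Z(\mu(t),\mu(t))$ (constant by invariance, strictly decreasing at a proper non-transversal intersection by Proposition~\ref{p:strict2}), and the contradiction is immediate. Equivalently one could use $Z(\mu^{1}(t),\mu^{2}(t))$ with the product measure $\mu^{1}\times\mu^{2}$; either way, no limiting construction is needed because the invariant measures are already given.

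\textbf{The infinite-degree case is not handled.} Your outward iteration ``$u_{-1}=v_{-1},\,u_{2}=v_{2}$, and so on, hence $u\equiv v$'' can stall on one side only: one may have $u_{i}=v_{i}$ for all $i\geq 0$ while $u_{-1}\neq v_{-1}$. This is a singular zero of infinite degree, and neither continuity of $\pi$ nor bounded spacing rules it out. Propositions~\ref{p:strict1}--\ref{p:strict2} require a \emph{finite}-degree (``proper'') non-transversal intersection. The paper treats this case separately via Lemma~\ref{l:infinite}: it produces small time-translates $u(t_{0}),v(t_{1})$ (or $u(t_{0}),u(t_{1})$) with a finite-degree singular zero, and then uses $\varphi$-invariance of the supports (in the AC case, constancy of $t\mapsto Z(\mu(t+t_{0}),\mu(t+t_{1}))$) to close the argument.
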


The injective projection will be given by%
\begin{equation}
\pi (u)=(u_{0},u_{1}-u_{0})\text{.}  \label{r:pi}
\end{equation}

As the projection $\pi $ is injective, we can pull the translation $T$, and
the semiflow $\varphi ^{t}$ in the DC\ case, or the map $\varphi ^{1}$ to
the 2-dimensional set $\pi (X)$. We introduce the following notation:%
\begin{equation}
\begin{array}{l}
\widetilde{\mathcal{X}}=\pi (\mathcal{X}), \\ 
\widetilde{\mathcal{K}}_{n}=\pi (\mathcal{K}_{n}), \\ 
\widetilde{T}=\pi \circ T\circ \pi ^{-1}, \\ 
\widetilde{\varphi }^{t}=\pi \circ \varphi ^{t}\circ \pi ^{-1}\text{.}%
\end{array}
\label{r:DEF}
\end{equation}

\begin{theorem}
\label{t:main2}For each $n\in \mathbb{N}$, the map $\widetilde{T}$ is a
homeomorphism of $\widetilde{\mathcal{K}}_{n}$, in the DC$\ $case $%
\widetilde{\varphi }^{t}$ is a continuous flow on $\widetilde{\mathcal{K}}%
_{n}$, in the AC\ case $\widetilde{\varphi }^{1}$ is a homeomorphism on $%
\widetilde{\mathcal{K}}_{n}$, and $\widetilde{T}$ and $\widetilde{\varphi }%
^{t}$ (respectively $\widetilde{\varphi }^{1}$) commute.
\end{theorem}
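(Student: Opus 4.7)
The plan is to exploit Theorem~\ref{t:main1} together with compactness of $\mathcal{K}_n$ to turn $\pi$ into a topological conjugacy on $\mathcal{A}\cap \mathcal{K}_n$, and then to transfer the properties of $T$ and $\varphi^t$ onto $\widetilde{T}$ and $\widetilde{\varphi}^t$. The only structural input not contained in Theorem~\ref{t:main1} is the fact that the semiflow itself preserves each $\mathcal{K}_n$, and this is where the twist condition will enter.

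First I would verify that $\mathcal{K}_n$ is compact Hausdorff: any $u\in K_n$ satisfies $|u_i-u_0|\le n|i|$, so selecting the $R$-representative with $u_0\in [0,1)$ embeds $\mathcal{K}_n$ as a closed subset of a product of compact intervals, which is compact by Tychonoff. Theorem~\ref{t:main1} supplies a continuous injection of $\mathcal{A}\cap \mathcal{K}_n$ into the Hausdorff cylinder $\mathbb{T}^1\times \mathbb{R}$, and a continuous bijection from a compact space to a Hausdorff space is automatically a homeomorphism onto its image; hence $\pi^{-1}$ is continuous on $\widetilde{\mathcal{K}}_n$.

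Next I would check that $T$ and $\varphi^t$ preserve $\mathcal{A}\cap \mathcal{K}_n$. The shift $T$ preserves spacing exactly, so $T(\mathcal{K}_n)=\mathcal{K}_n$, and $T$- and $\varphi$-invariance of every $\mu\in\mathcal{E}(\mathcal{X})$ give $T(\mathcal{A})=\varphi^t(\mathcal{A})=\mathcal{A}$. The key remaining preservation $\varphi^t(\mathcal{K}_n)\subseteq \mathcal{K}_n$ uses the twist condition $V_{12}\le -\delta$: since $\partial \dot u_j/\partial u_{j\pm 1}=-V_{12}\ge \delta>0$, the system~(\ref{r:FK}) is cooperative and hence order-preserving. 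For $u\in K_n$ the pointwise inequalities $Tu-n\le u\le Tu+n$ are equivalent to $|u_j-u_{j-1}|\le n$; applying $\varphi^t$ and using its order-preservation together with the commutations $T\varphi^t=\varphi^t T$ and $\varphi^t(v+n)=\varphi^t(v)+n$ for $n\in \mathbb{N}$ yields $T\varphi^t(u)-n\le \varphi^t(u)\le T\varphi^t(u)+n$, i.e.\ $\varphi^t(u)\in K_n$. It follows that $\widetilde{T}=\pi \circ T\circ \pi^{-1}$ is a homeomorphism of $\widetilde{\mathcal{K}}_n$ as a composition of homeomorphisms, and the commutativity $\widetilde{T}\widetilde{\varphi}^t=\widetilde{\varphi}^t\widetilde{T}$ descends from $T\varphi^t=\varphi^t T$ on $\mathcal{X}$.

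The main obstacle is upgrading the semiflow to a full flow on $\widetilde{\mathcal{K}}_n$ in the DC case (and making $\widetilde{\varphi}^1$ a homeomorphism in the AC case). Forward continuity and the semigroup law come directly from the semiflow on $\mathcal{X}$. For invertibility, $\varphi^t_*\mu=\mu$ forces $\varphi^t$ to be surjective on each $\mathrm{supp}(\mu)$, and hence on $\mathcal{A}\cap \mathcal{K}_n$; injectivity follows from standard uniqueness for the ODE~(\ref{r:FK}), whose vector field is locally Lipschitz on $K_n$ thanks to $C^2$-regularity of $V$. A continuous bijection on the compact Hausdorff space $\widetilde{\mathcal{K}}_n$ is automatically a homeomorphism, so $\widetilde{\varphi}^{-t}$ (DC) and $(\widetilde{\varphi}^1)^{-1}$ (AC) are continuous, completing the proof.
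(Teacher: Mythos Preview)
Your proposal is correct and follows essentially the same route as the paper: the paper's proof invokes Theorem~\ref{t:main1} plus compactness of $\mathcal{K}_n$ to make $\pi$ a homeomorphism, and then cites Lemma~\ref{l:flow} (whose proof is exactly your surjectivity-from-invariance plus injectivity-from-local-flow argument) and Lemma~\ref{l:existence} (whose proof contains the order-preservation argument for forward invariance of $\mathcal{K}_n$ that you reproduce). One cosmetic point: your compactness sketch picks the representative with $u_0\in[0,1)$, which is not closed; use $u_0\in\mathbb{T}^1$ together with the spacings $(u_{i+1}-u_i)\in[-n,n]$ instead.
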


We propose to call the pair $(\widetilde{T}$,$\widetilde{\varphi })$ in the
DC case the \textit{characteristic flow-map}, and the pair $(\widetilde{T}$,$%
\widetilde{\varphi }^{1})$ in the AC case the \textit{characteristic maps}.

We give a detailed example in Section \ref{s:example} for the case $F=0$,
and relate our main results to the Aubry-Mather theory. In that case in
particular, $\widetilde{\mathcal{X}}$ is the entire cylinder $\mathbb{T}%
^{1}\times \mathbb{R}$ if $V$ is close to integrable, $\widetilde{\varphi }%
^{t}$ is the identity, and $\widetilde{T}$ is diffeomorphic to an
area-preserving twist map whose generating function is $V$ (\cite{Katok05}).

We do not know whether in general the maps $\widetilde{T}$ and $\widetilde{%
\varphi }^{t}$ can be naturally extended to the entire cylinder $\mathbb{T}%
^{1}\times \mathbb{R}$. Also unlike in the equilibrium case $F=0$, we do not
a-priori know $\widetilde{T}$, $\widetilde{\varphi }^{t}$. We give some
conjectures in Section \ref{s:characteristic}. We can, however, show that $%
\widetilde{\mathcal{X}}\subset \mathbb{T}^{1}\times \mathbb{R}$ is a"rich"
set for all $F$, as it contains analogues to Aubry-Mather sets of minimizing
configurations/orbits with an arbitrary \textit{rotation number} (called
also \textit{mean spacing}) found in the case $F=0$, and to Mather's
minimizing measures (\cite{Mather89}). (We postpone definitions of rotation
number, a $(\varphi ,T)$-ergodic measure, a rotationally ordered
configuration, and totally ordered orbit to Sections \ref{s:measure}, \ref%
{s:characteristic}.)

\begin{theorem}
\label{t:main3} Given any $\rho \in \mathbb{R}$, there exists a $(\varphi
,T) $-ergodic measure on $\mathcal{X}$ supported on a set of rotationally
ordered configurations with the rotation number $\rho $.
\end{theorem}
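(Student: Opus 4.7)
The plan is to construct the required measure first for rational $\rho=p/q$ by Krylov--Bogolyubov on the finite-dimensional invariant subspace of $q$-periodic configurations, then to reach irrational $\rho$ by approximation and weak-$*$ compactness, and finally to extract an ergodic component. A recurring ingredient is the uniform spacing estimate $u_{i+1}-u_i\in[\lfloor\rho\rfloor,\lceil\rho\rceil+1]$ valid for any rotationally ordered configuration with rotation number $\rho$ (immediate from totality of the $(T,R)$-orbit), so all supports constructed below lie in a common compact $\mathcal{K}_N/R\subset\mathcal{X}$ for any $N>|\rho|+2$, on which $\varphi^t$ and $T$ act continuously by Theorem \ref{t:main2}.

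For rational $\rho=p/q$, the subspace $P_{p,q}=\{u\in X:u_{i+q}=u_i+p\text{ for all }i\}$ is finite-dimensional affine, invariant under $\varphi^t$ (by translation equivariance) and under $T$, with compact quotient $P_{p,q}/R$. Inside it, the set $\mathcal{O}_{p,q}$ of rotationally ordered configurations is closed (a countable intersection of closed comparison conditions) and $(\varphi,T)$-invariant, since the twist condition $V_{12}\leq-\delta<0$ makes $\varphi^t$ order-preserving and hence preserves totality of $(T,R)$-orbits. Non-emptiness of $\mathcal{O}_{p,q}$ follows from existing work (Baesens--MacKay \cite{Baesens98}, \cite{Baesens05} in the DC case; Hu--Qin \cite{Hu05} in the AC case), which furnishes rotationally ordered $q$-periodic equilibria or uniformly sliding / time-periodic solutions for every $p/q$. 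Krylov--Bogolyubov on the compact $(\varphi,T)$-invariant set $\mathcal{O}_{p,q}/R$ yields a $\varphi^t$- (DC) or $\varphi^1$- (AC) invariant Borel probability measure $\nu$; symmetrizing $\mu_{p,q}=\frac{1}{q}\sum_{k=0}^{q-1}T_*^{k}\nu$ gives a $(\varphi,T)$-invariant measure supported on $\mathcal{O}_{p,q}$, and any ergodic component proves the theorem for $\rho=p/q$.

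For irrational $\rho$, pick rationals $\rho_n=p_n/q_n\to\rho$ with associated ergodic measures $\mu_n$; the uniform spacing bound places them all on a common compact $\mathcal{K}_N/R$, and I extract a weak-$*$ subsequential limit $\mu$. The $(\varphi,T)$-invariance passes to the limit by continuity, the closed condition of rotational ordering forces $\mathrm{supp}(\mu)$ to consist of rotationally ordered configurations, and continuity of the rotation-number function $u\mapsto\lim u_n/n$ on $\mathcal{O}\cap\mathcal{K}_N$ (which holds since $|u_n/n-\rho(u)|=O(1/n)$ uniformly under bounded spacing) forces $\mathrm{supp}(\mu)$ to consist of configurations with rotation number exactly $\rho$. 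Finally, ergodically decompose $\mu$ with respect to $(\varphi,T)$; since the rotation number and the indicator of $\mathcal{O}$ are $(\varphi,T)$-invariant measurable functions, $\mu$-a.e.\ ergodic component has rotation number $\rho$ and is supported on rotationally ordered configurations.

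The main obstacle, apart from some care with the semiflow/flow asymmetry, is verifying non-emptiness of $\mathcal{O}_{p,q}$ in the AC case under the present generality of $V,F$: that step requires a fixed-point / degree argument on the compact space $P_{p,q}/R$ exploiting the order-preserving Poincar\'e map $\varphi^{1}$, which I would invoke from \cite{Hu05} rather than re-prove. Everything else reduces to standard facts (Krylov--Bogolyubov, ergodic decomposition, the portmanteau theorem applied to the closed invariant sets above) once the spacing bound and the continuity of the rotation number on $\mathcal{O}\cap\mathcal{K}_N$ are in hand.
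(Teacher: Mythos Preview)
Your argument is correct in outline, but it takes a substantial detour compared with the paper's proof, and the detour imports an external difficulty that is entirely avoidable.

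The paper works directly with the full set $\mathcal{R}_\rho$ of rotationally ordered configurations with rotation number $\rho$, for \emph{arbitrary} $\rho\in\mathbb{R}$. The key point you are missing is that $\mathcal{R}_\rho$ is trivially non-empty: the linear configuration $u_i=\rho i$ is rotationally ordered with rotation number $\rho$, since $(T_{p,q}u)_i-u_i=\rho p+q$ is constant in $i$. One then checks that $\mathcal{R}_\rho$ is closed, contained in some $\mathcal{K}_n$ (from the uniform width estimate), and $(\varphi,T)$-invariant (because $T_{p,q}$ commutes with $\varphi$ and $T$, and $\varphi$ is order-preserving). A single Krylov--Bogolyubov averaging over both $\varphi$ and $T$ then yields a $(\varphi,T)$-invariant measure supported on $\mathcal{R}_\rho$, and ergodic decomposition finishes.

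Your route---rational $\rho$ first via the finite-dimensional periodic subspaces, then irrational $\rho$ by weak-$*$ approximation---also works, but it forces you to establish non-emptiness of $\mathcal{O}_{p,q}$ by invoking Baesens--MacKay and Hu--Qin, which you yourself flag as the main obstacle. That obstacle is an artifact of the approach: there is no need to find a \emph{periodic} rotationally ordered configuration, because the linear one already lies in $\mathcal{R}_\rho$. Likewise the approximation step, the continuity-of-rotation-number argument, and the portmanteau bookkeeping all become unnecessary once you apply Krylov--Bogolyubov directly on $\mathcal{R}_\rho$ for the given $\rho$.
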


In the DC case, we can completely describe the set of all $(\varphi ,T)$%
-invariant and ergodic measures:

\begin{theorem}
\label{t:main4}\ Assume $F$ is DC. Then every $(\varphi ,T)$-ergodic measure
on $\mathcal{X}$ is supported either on a set of equilibria, or on a single
totally ordered periodic orbit.
\end{theorem}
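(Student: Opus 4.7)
The plan is as follows. Let $\mu$ be a $(\varphi,T)$-ergodic measure on $\mathcal{X}$, necessarily supported on some $\mathcal{K}_n$. The strategy is to push $\mu$ through $\pi$ to reduce to a 2D dynamical system, apply the Poincar\'{e}--Bendixson theorem there, and close using the weak Lyapunov function $Z$. By Theorem~\ref{t:main1}, $\pi$ is injective on $\mathcal{A}\supseteq\mathrm{supp}(\mu)$, so $\tilde\mu:=\pi_*\mu$ is a Borel probability measure on the compact set $\widetilde{\mathcal{K}}_n\subset\mathbb{T}^1\times\mathbb{R}$. By Theorem~\ref{t:main2}, $\tilde\mu$ is invariant and ergodic under the commuting pair $(\widetilde T,\widetilde\varphi^t)$, where $\widetilde\varphi^t$ is a continuous 2-dimensional flow; every structural conclusion about $\tilde\mu$ then transfers verbatim to $\mu$ via injectivity of $\pi$.

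Next I would split into cases using the fixed point set $E=\mathrm{Fix}(\widetilde\varphi)\subseteq\widetilde{\mathcal{K}}_n$. Since $\widetilde T$ commutes with $\widetilde\varphi^t$, $E$ is both $\widetilde\varphi$- and $\widetilde T$-invariant, so ergodicity forces $\tilde\mu(E)\in\{0,1\}$. If $\tilde\mu(E)=1$, then $\mu$ is supported on equilibria of $\varphi$, giving the first case of the theorem. Otherwise $\tilde\mu(E)=0$, so $\tilde\mu$-a.e.\ $x\in\mathrm{supp}(\tilde\mu)$ is a non-equilibrium that is $\widetilde\varphi$-recurrent by Poincar\'{e} recurrence, and the Poincar\'{e}--Bendixson theorem applied to the 2D flow $\widetilde\varphi$ on the compact set $\widetilde{\mathcal{K}}_n$ forces its $\widetilde\varphi$-orbit to be a periodic loop. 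The minimal period $P(x)$ is measurable and $(\widetilde\varphi,\widetilde T)$-invariant, hence $\tilde\mu$-almost surely equal to a constant $\tau>0$.

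It remains to show that the support is a \emph{single} periodic orbit consisting of totally ordered configurations. Both conclusions rest on the weak Lyapunov function $Z(\mu^1,\mu^2)$ introduced in Section~\ref{s:measure}: $\varphi$-invariance of $\mu$ makes $t\mapsto Z(\mu,\mu)$ constant, so the strict drop of $Z$ at non-transverse intersections forces $\mu\times\mu$-almost every pair of configurations to intersect only transversally, and $T$-invariance extends this property to all translates. This measure-theoretic rigidity should yield pointwise constancy of sign of $i\mapsto u_{i+k}-u_i-m$ for every $(k,m)\ne(0,0)$ (i.e.\ total ordering of $u$), and simultaneously preclude distinct coexisting $\widetilde\varphi$-orbits of the common period $\tau$ in the support: two such orbits would, after an appropriate $\widetilde T$-shift, realize a non-transverse intersection and contradict constancy of $Z(\mu,\mu)$. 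The main obstacle I expect is precisely this last stage, namely converting the measure-theoretic constancy of $Z(\mu,\mu)$ into pointwise ordering and uniqueness of orbit; here the commuting $\widetilde T$-action must be used in tandem with the Poincar\'{e}--Bendixson output to manufacture the shifts needed to compare distinct orbits or distinct translates of a single configuration.
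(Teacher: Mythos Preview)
Your overall strategy---project to the cylinder via $\pi$ and invoke Poincar\'e--Bendixson---matches the paper's Lemma~\ref{l:uc3}. However, two genuine ingredients are missing. First, the paper does not split on $\tilde\mu(\mathrm{Fix}(\widetilde\varphi))$ but on the average speed $v(\mu)=\int(u_0(1)-u_0(0))\,d\mu$. When $v(\mu)=0$ the paper bypasses Poincar\'e--Bendixson entirely and computes
\[
\frac{d}{dt}\int V(u_0,u_1)\,d\mu \;=\; -\int (u_0')^2\,d\mu \;+\; F\,v(\mu),
\]
using $T$-invariance to pass from $V_2(u_0,u_1)u_1'$ to $V_2(u_{-1},u_0)u_0'$; since the left side vanishes by $\varphi$-invariance, $u_0'\equiv 0$ $\mu$-a.e.\ and the support is equilibria. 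You do not mention this energy identity, and without it your $\tilde\mu(E)=0$ branch cannot exclude homotopically trivial periodic loops on the cylinder. This matters because the paper's single-orbit argument (Lemma~\ref{l:single}) explicitly needs $v(\mu)\neq 0$ to conclude the projected loop is homotopically nontrivial, hence $u(t_0)=u(0)\pm 1$.

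Second, your plan to extract uniqueness and total ordering from constancy of $Z(\mu,\mu)$ does not go through as stated. Constancy of $Z(\mu,\mu)$ only forbids \emph{singular} zeros between configurations in $\mathrm{supp}(\mu)$ (this is already Remark~\ref{r:one}); distinct periodic orbits can perfectly well coexist while intersecting only transversally, so ``after an appropriate $\widetilde T$-shift, realize a non-transverse intersection'' is precisely what has to be proved, not assumed. The paper's route is different: (i) it shows $u(0)$ and $u(t)$ are comparable for $0<t<t_0$ by tracking a hypothetical regular zero of $u(t)-u(0)$ continuously in $t$ until it escapes to $\pm\infty$, then extracting a $T^{n_k}$-limit in $\mathrm{supp}(\mu)$ with strictly smaller period; (ii) it then passes to the Poincar\'e section $\{x=0\}$, where $\mathrm{supp}(\mu)/\varphi$ becomes a compact set $\bar B\subset\mathbb{R}$ on which the induced map $\bar T$ is order-preserving, and any $\bar T$-ergodic measure on such a set is a Dirac mass. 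Finally, note that ``totally ordered'' in the theorem means the set $\{\varphi^t T^k u:t\in\mathbb{R},\,k\in\mathbb{Z}\}$ is totally ordered, not the rotational-ordering condition on $u_{i+k}-u_i-m$ that you wrote.
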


The totally ordered periodic orbits in Theorem \ref{t:main4} are in the
literature known as uniformly sliding states (\cite{Baesens98}, \cite%
{Baesens05}, \cite{Qin10}, \cite{Qin11}). We show in Section \ref{s:sliding}
that the known results on uniformly sliding states follow as a direct
corollary from our considerations.

Finally, we address asymptotics of the dynamics (\ref{r:FK}) relative to the
set $\mathcal{A}$ defined by (\ref{r:ea}).

\subsection{\textbf{The spatio-temporal attractor. }}

We propose a notion of attractiveness which is weaker than the usual notions
of attractor or union of $\omega $-limit sets, but for extended systems such
as (\ref{r:FK}) often in practice (physical systems or numerical
simulations)\ indistinguishable from them. Imagine a physical situation in
which we observe dynamics of (\ref{r:FK}) on $\mathbb{Z}$ (or a very large
subset of $\mathbb{Z}$), and that no position in space is preferred.
Consider also that an initial condition $u(0)$ emerges as a realization of
some spatially homogenous random process.\ For example, we can choose $u(0)$
as a double-infinite sequence so that $u_{i}$ (or $u_{i+1}-u_{i}$) is chosen
randomly and independently in each step $i\in \mathbb{Z}$ from the same
bounded measurable subset of $\mathbb{R}$. We can study that as analyzing
the dynamics (\ref{r:FK}) with respect to $\mu $-a.e initial condition,
where $\mu $ is a $T$-invariant probability measure on $\mathcal{X}$. We
propose a notion of spatio-temporal attractiveness, and say that a set $%
\mathcal{A}$ is a spatio-temporal attractor, if given any $T$-invariant
probability measure $\mu $, then for $\mu $-a.e. initial condition $u$ and
with density of times $t$ equal to $1$, $\varphi ^{t}(u)$ is arbitrarily
close to $\mathcal{A}$. More precisely, given any $S>0$, we define the Borel
probability measure $P_{\mu ,S}$ on $\mathcal{X}\times \lbrack 0,S]$ as the
product measure $\mu \times \lambda _{S}$, where $\lambda _{S}=(1/S)\lambda
|_{[0,S]}$ is the normed Lebesgue measure $\lambda $ restricted to $[0,S]$,
and define the attractor as follows:

\begin{definition}
A set $\mathcal{A}$ is a spatio-temporal attractor for (\ref{r:FK}) on $%
\mathcal{X}$, if given any $T$-invariant probability measure $\mu \ $on $%
\mathcal{X}$ and any open neighbourhood $U$ of $\mathcal{A}$, 
\begin{equation}
\lim_{S\rightarrow \infty }P_{\mu ,S}(\{(u,t),\text{ }\varphi ^{t}(u)\in
U\})=1\text{.}  \label{d:attractor}
\end{equation}
\end{definition}

We finally in Section \ref{s:attractor} we show that the set $\mathcal{A}$
described in Theorems \ref{t:main1}-\ref{t:main4}, is indeed a
spatio-temporal attractor.

\begin{theorem}
\label{t:main5}The set $\mathcal{A}$ defined with (\ref{r:ea}) is the
spatio-temporal attractor for (\ref{r:FK}) on $\mathcal{X}$.
\end{theorem}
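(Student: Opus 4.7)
The plan is a Krylov--Bogolyubov averaging argument combined with tightness. Given any $T$-invariant Borel probability measure $\mu$ on $\mathcal{X}$, introduce the time-averaged pushforward measures
$$\nu_{S}=\frac{1}{S}\int_{0}^{S}(\varphi^{t})_{\ast}\mu\,dt.$$
Each $\nu_{S}$ is $T$-invariant because $\varphi^{t}$ commutes with $T$, and by Fubini $P_{\mu,S}(\{(u,t):\varphi^{t}(u)\in U\})=\nu_{S}(U)$; hence it suffices to show $\nu_{S}(U)\to 1$ for every open neighbourhood $U$ of $\mathcal{A}$.

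The first step is tightness of $\{\nu_{S}\}_{S>0}$ on $\mathcal{X}$. Here I rely on the a priori spacing control from Section 4 (where the semiflow is constructed), which should give $\varphi^{t}(\mathcal{K}_{n})\subset\mathcal{K}_{n'}$ for some $n'=n'(n)$ independent of $t\geq 0$; this is what the twist condition together with periodicity $V(u+1,v+1)=V(u,v)$ and boundedness of $F$ buys via a comparison argument against spatially constant sub- and super-solutions. Given $\varepsilon>0$, choose $n$ with $\mu(\mathcal{K}_{n})>1-\varepsilon$; $T$-invariance of $\mu$ and the inclusion above yield
$$\nu_{S}(\mathcal{K}_{n'})=\frac{1}{S}\int_{0}^{S}\mu(\varphi^{-t}(\mathcal{K}_{n'}))\,dt\geq\mu(\mathcal{K}_{n})>1-\varepsilon,$$
so tightness holds because each $\mathcal{K}_{n'}$ is compact in the product topology.

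The second step is the standard averaging estimate: for any bounded continuous $f$ on $\mathcal{X}$ and $s\in\mathbb{R}$ (DC) or $s\in\mathbb{Z}$ (AC),
$$\Bigl|\int f\circ\varphi^{s}\,d\nu_{S}-\int f\,d\nu_{S}\Bigr|\leq\frac{2|s|\,\|f\|_{\infty}}{S}\longrightarrow 0,$$
so any weak-$\ast$ limit point $\nu_{\infty}$ of $(\nu_{S})$ is $\varphi^{s}$-invariant for the appropriate range of $s$ and remains $T$-invariant. Thus $\nu_{\infty}\in\mathcal{E}(\mathcal{X})$ and $\operatorname{supp}\nu_{\infty}\subset\mathcal{A}\subset U$, so $\nu_{\infty}(U)=1$. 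To close, if $\nu_{S}(U)\not\to 1$ then along some $S_{k}\to\infty$ one has $\nu_{S_{k}}(U)\leq 1-\eta$; tightness extracts a weak-$\ast$ convergent subsubsequence $\nu_{S_{k_{j}}}\to\nu_{\infty}$, and the Portmanteau inequality for open sets gives $1-\eta\geq\liminf_{j}\nu_{S_{k_{j}}}(U)\geq\nu_{\infty}(U)=1$, a contradiction.

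The main obstacle is the tightness step, i.e.\ the uniform-in-time spacing bound $\varphi^{t}(\mathcal{K}_{n})\subset\mathcal{K}_{n'(n)}$. In the worst case this estimate may only hold with $n'$ growing with $t$, in which case the argument must be refined by choosing $n$ depending on $\varepsilon$ and then exploiting that the average over $[0,S]$ still concentrates mass in a fixed $\mathcal{K}_{N}$ (with $N$ independent of $S$), using that $n'(t)$ grows sufficiently slowly — again a consequence of the periodicity and twist structure. All other ingredients (Fubini, Krylov--Bogolyubov averaging, Portmanteau) are soft and require no further input beyond what is already in place once $\mathcal{E}(\mathcal{X})$ and $\mathcal{A}$ have been set up.
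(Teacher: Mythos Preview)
Your argument is correct and follows essentially the same Krylov--Bogolyubov averaging route as the paper's own proof in Section~\ref{s:attractor}: form the time-averaged measures $\nu_S$, extract a weak limit which is $(\varphi,T)$-invariant and hence supported on $\mathcal{A}$, and conclude via Portmanteau.

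The one place you overcomplicate matters is tightness. Your ``main obstacle'' is not an obstacle at all: Lemma~\ref{l:existence} already establishes that each $\mathcal{K}_n$ is \emph{forward invariant} under $\varphi^t$ (via the monotonicity argument $Tu+n\geq u\geq Tu-n$), so in your notation $n'(n)=n$ with no dependence on $t$. The paper exploits this by first reducing, through $T$-ergodic decomposition, to the case $\mu\in\mathcal{M}(\mathcal{K}_n)$; then every $\nu_S$ lies in the compact set $\mathcal{M}(\mathcal{K}_n)$ and tightness is automatic. Your direct tightness argument (choosing $n$ with $\mu(\mathcal{K}_n)>1-\varepsilon$) also works once you plug in $n'=n$, and has the mild advantage of not invoking ergodic decomposition, but the speculative paragraph about $n'$ growing with $t$ should simply be deleted.
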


Lastly, we comment on the spatio-temporal entropy of (\ref{r:FK}). A
corollary of Theorem \ref{t:main4} is that the spatio-temporal entropy of (%
\ref{r:FK}) on $\mathcal{X}$ in the DC\ case is zero (see e.g. \cite%
{Mielke09}, \cite{Zelik03} for definitions). A proof follows from the
variational principle for metric and topological spatio-temporal entropy and
our description of all spatio-temporal invariant measures;\ we intend to
report details separately. This complements well the existing results for
PDE\ systems such as (\ref{r:rfd}). Specifically, Zelik \cite{Zelik03}
showed that extended gradient systems (the case $F=0$ of (\ref{r:FK})) have
spatio-temporal entropy zero, Mielke and Zelik \cite{Mielke09} constructed $%
1d$ non-autonomous PDE\ examples with positive entropy, and Turaev and Zelik 
\cite{Turaev10} proved positive entropy for a $1d$ complex system.

\subsection{\textbf{The structure of the paper. }}

In Section \ref{s:example} we as an example summarize the known facts on the
dynamics (\ref{r:FK})\ in the "extended gradient" case $F=0$, and relate
them to our results. In Sections \ref{s:zeroset}, \ref{s:intersections} we
develop tools on the dynamics of the zero-set of a non-autonomous system of
infinitely many linear ODEs, and as a corollary on dynamics of intersections
of a cooperative system of infinitely many ODEs. Here we extend some of the
known results (\cite{Baesens05}, \cite{Smillie84}, \cite{Smith:96})\ to
systems of infinitely many ODEs, less smooth then e.g. in \cite{Smillie84},
and on a general space of solutions (the Fr\'{e}chet space of configurations
of sub-exponential growth at infinity). As such, our results could be of
independent interest. We postpone most of the technicalities to Appendices
1,2, dealing in particular with some exceptional, "degenerate" situations
due to the infinite number of equations. In Section \ref{s:measure} we
introduce the semiflow on the space of translationally invariant measures on 
$\mathcal{X}$, and show that the average-number-of-intersections-counting
function $Z(\mu _{1},\mu _{2})$ is non-increasing and strictly decreasing if 
$u,v$ in the supports of $\mu _{1},\mu _{2}$ have a non-transversal
intersection (or, using the alternative terminology of the paper, $u-v$ has
a singular zero). Once all the tools are in place, the proofs of Theorems %
\ref{t:main1}--\ref{t:main5} are relatively straightforward and given in
Sections \ref{s:characteristic}--\ref{s:attractor}, including a more
detailed discussion of uniformly sliding states.

\section{Example:\ dynamics without driving and Aubry-Mather theory \label%
{s:example}}

In this section we relate our results to the known results in the case of no
driving $F=0$. Our assumptions on the function $V$ are the standard setting
of the Aubry-Mather theory. Let us first consider the equilibria $du/dt=0$
of (\ref{r:FK}), denoted by $\mathcal{E}\subset \mathbb{R}^{\mathbb{Z}}/R$,
where we again identify $u$ and $Ru=u+1$ and consider the quotient set. It
is straightforward to deduce from the conditions on $V$ that the projection $%
\pi :\mathcal{E\rightarrow }\mathbb{T}^{1}\times \mathbb{R}$ is a
homeomorphism (again considering the induced product topology on $\mathcal{E}
$). Also the map $\widetilde{T}:\mathbb{T}^{1}\times \mathbb{R\rightarrow T}%
^{1}\times \mathbb{R}$ is $C^{2}$, diffeomorphic to the twist map generated
by $V$ (\cite{Katok05}, Section 9.3). The diffeomorphism is given by the
discrete Legendre transform, which is for example\ identity in the
"standard" case%
\begin{equation}
V(u,v)=(v-u)^{2}/2+W(u),  \label{r:standard}
\end{equation}%
where $W(u+1)=W(u)\,$and $W$ is a $C^{2}$ function.

It is however not a-priori clear that all the $(\varphi ,T)$-invariant
measures of (\ref{r:FK}) with $F=0$ are supported on $\mathcal{E}$. The
system (\ref{r:FK}) is an example of an extended gradient system, i.e. a
system which has a formal Lyapunov function (in our case the formal sum $%
\sum_{i=-\infty }^{\infty }V(u_{i},u_{i+1})$), and is a gradient system when
reduced to a finite domain.\ It has been shown that extended gradient
systems can have more complex dynamics than what is by LaSalle principle
possible for gradient systems (\cite{Gallay:01}, \cite{Gallay:12}, \cite%
{Slijepcevic00}), including possible existence of non-stationary orbits in $%
\omega $-limit sets.

In \cite{Slijepcevic00} we however showed the following:

\begin{theorem}
All $(\varphi ,T)$-invariant measures of (\ref{r:FK})\ on $\mathcal{X}$ in
the case $F=0$ are supported on $\mathcal{E}$.
\end{theorem}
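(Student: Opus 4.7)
The plan is to extend the finite-dimensional LaSalle principle to this spatially translation-invariant setting by averaging the formal Lyapunov function $\sum_i V(u_i,u_{i+1})$ against a $T$-invariant measure. For $T$-invariant $\mu$ supported on some $\mathcal{K}_n$, I would define the \emph{average energy per site}
$$
L(\mu) = \int_{\mathcal{X}} V(u_0,u_1)\,d\mu(u).
$$
On $\mathcal{K}_n$ this is finite: $V$ is $R$-invariant, so on the quotient we may take $u_0 \in [0,1)$, and then $|u_1-u_0|\leq n$ confines $(u_0,u_1)$ to a compact set on which the $C^2$ function $V$ is bounded. The first step is the reduction to this situation: since $\sup_i|u_{i+1}-u_i|$ is a $T$-invariant function, ergodic decomposition with respect to the abelian semigroup generated by $T$ and $\varphi$ expresses every $(\varphi,T)$-invariant measure as an integral of ergodic measures, each of which is supported on some $\mathcal{K}_n$.

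Next I would compute the evolution of $L$ along the semiflow. Writing $\mu(t)$ for the push-forward of $\mu$ under $\varphi^t$, and noting that $\mu(t)$ remains $T$-invariant because $\varphi$ and $T$ commute, differentiation under the integral together with the equation of motion (\ref{r:FK}) with $F=0$ gives
$$
\frac{d}{dt}L(\mu(t)) = \int \bigl[V_1(u_0,u_1)\dot u_0 + V_2(u_0,u_1)\dot u_1\bigr]\,d\mu(t)(u).
$$
Using $T$-invariance of $\mu(t)$, the second integrand may be re-indexed to $V_2(u_{-1},u_0)\dot u_0$, and the bracket then collapses against the identity $\dot u_0 = -V_2(u_{-1},u_0)-V_1(u_0,u_1)$ to yield the key formula
$$
\frac{d}{dt}L(\mu(t)) = -\int \dot u_0^{\,2}\,d\mu(t)(u) \leq 0.
$$
On $\mathcal{K}_n$ all relevant integrands are bounded continuous functions of $u$, so dominated convergence legitimates both the differentiation under the integral and the re-indexing.

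If $\mu$ is $\varphi^t$-invariant then $\mu(t)=\mu$ for all $t$, so $L(\mu(t))$ is constant and the identity above forces $\int \dot u_0^{\,2}\,d\mu = 0$. Hence $\dot u_0 = 0$ for $\mu$-a.e.\ $u$; by $T$-invariance, $\dot u_i=0$ for every $i\in\mathbb{Z}$ on a set of full $\mu$-measure, i.e.\ $\mu$ is supported on $\mathcal{E}$. The main obstacle is the measure-theoretic bookkeeping attached to the infinite-dimensional Lyapunov computation: the formal sum $\sum_i V(u_i,u_{i+1})$ diverges, so one must define its spatial average $L(\mu)$ rigorously, carry out the reduction from $\mathcal{X}=\bigcup_n\mathcal{K}_n$ to a single level $\mathcal{K}_n$, and justify the differentiation and re-indexing under the integral. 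Once these technicalities are handled, the argument is a clean infinite-dimensional analogue of LaSalle's principle obtained via spatial averaging.
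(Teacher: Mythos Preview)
Your proposal is correct and follows essentially the same route as the paper: the paper's stated key tool is precisely the Lyapunov function $\mu \mapsto \int V(u_0,u_1)\,d\mu$ on $T$-invariant measures, and the identical computation (with the extra $F v(\mu)$ term) is carried out in detail in the proof of Theorem~\ref{t:main4}, including the reduction to $\mathcal{K}_n$, the differentiation under the integral justified by compactness, the re-indexing via $T$-invariance, and the conclusion $u_0'(t)=0$ $\mu$-a.e.\ from $\varphi$-invariance.
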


The key tool is an observation that the function $\mu \mapsto \int
V(u_{0},u_{1})d\mu $ is a Lyapunov function on the space of $T$-invariant
probability measures $\mathcal{X}$, so LaSalle principle must hold for the
evolution of $T$-invariant measures.

We conclude that Theorems \ref{t:main1}-\ref{t:main3} do hold in the case $%
F=0$, where the measures in Theorem \ref{t:main3} are for example Mather's
minimizing measures (\cite{Bangert88}, \cite{Mather89}).

Let us briefly discuss the question whether and when $\pi :\mathcal{%
A\rightarrow }\mathbb{T}^{1}\times \mathbb{R}$ is a homeomorphism, where $%
\mathcal{A}$ is the support of all $(\varphi ,T)$-invariant measures of
bounded spacing as defined in (\ref{r:ea}). One can see that $\pi $ is a
homeomorphism if and only if $\mathcal{E}\subset \mathcal{X}$. This is true
because $\widetilde{T}$ is diffeomorphic to an area-preserving map, so we
can pull back a Lebesgue measure and construct $(\varphi ,T)$-invariant
measures supported on any closed subset of $\mathcal{E\cap X}$. In general, $%
\mathcal{E}\subset \mathcal{X}$ is true if and only if the area-preserving
twist map $f_{V}$ generated by $V$ is close enough to integrable. For
example, in the standard case (\ref{r:standard}), $\mathcal{E}\subset 
\mathcal{X}$ holds if and only if $f_{V}$ has homotopically non-trivial
invariant circles,\ otherwise one can construct "accelerating" invariant
measures of $f_{V}$;\ i.e. there are $T$-invariant measures in $\mathcal{E}$
which do not have bounded spacing (\cite{Slijepcevic99}).

Finally, let us consider asymptotics. The following slightly stronger
version of Theorem \ref{t:main5} in the case $F=0$ was shown in \cite%
{Slijepcevic00} and extended in \cite{Gallay:01}, \cite{Gallay:12} for much
more general systems:

\begin{theorem}
\label{t:seven}Consider the dynamics (\ref{r:FK})\ with $F=0$ on $\mathcal{X}
$.

(i)\ If $\mu $ is a $T$-invariant Borel probability measure on $\mathcal{X}$%
, then the $\omega $-limit set for $\mu $-a.e. $u\in \mathcal{X}$ consists
of equilibria.

(ii)\ Let $u\in \mathcal{X}$ and let $U$ be any open neighborhood of $%
\mathcal{E}$. Then the set of times $t\in \lbrack 0,\infty )$ such that $%
u(t)\in U$ has Banach density $1$.
\end{theorem}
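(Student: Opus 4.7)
The plan is to make rigorous the heuristic that the formal energy sum $\sum_j V(u_j,u_{j+1})$ is dissipated by the gradient dynamics, by lifting the argument to the space of $T$-invariant probability measures on $\mathcal{X}$. For a $T$-invariant probability measure $\mu$ supported in $\mathcal{K}_n$, set $\mathcal{V}(\mu)=\int V(u_0,u_1)\,d\mu$, which is bounded since $V$ is $\mathbb{Z}^2$-periodic and spacings on $\mathcal{K}_n$ lie in $[-n,n]$; the monotone (cooperative) structure of (\ref{r:FK}) together with the fact that translations commute with $\varphi$ shows that $\mathcal{K}_n$ is $\varphi$-invariant. Writing $\mu_t=(\varphi^t)_*\mu$, differentiating under the integral, substituting $\dot u_0=-V_2(u_{-1},u_0)-V_1(u_0,u_1)$, and using $T$-invariance to shift indices in $\int V_2(u_0,u_1)\dot u_1\,d\mu_t$ produces the key identity
$$\frac{d}{dt}\mathcal{V}(\mu_t)=-\int|\dot u_0|^2\,d\mu_t,$$
so that $\int_0^\infty\!\int|\dot u_0|^2\,d\mu_t\,dt<\infty$.

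For part (i), Fubini converts the integrated dissipation into $\int_0^\infty|\dot u_0(t;u)|^2\,dt<\infty$ for $\mu$-a.e.\ $u$; applying the same conclusion with $T^j_*\mu=\mu$ and intersecting countably many full-measure sets gives the same $L^2$-bound for every $\dot u_j$, $j\in\mathbb{Z}$. Differentiating (\ref{r:FK}) and using boundedness of $V_{ij}$ on the compact range of spacings in $\mathcal{K}_n$ gives a uniform bound on $\ddot u_j$, hence $\dot u_j(t;u)$ is uniformly continuous in $t$ and the $L^2$-bound upgrades to $\dot u_j(t;u)\to 0$ for every $j$. For any $\omega$-limit point $v$ obtained as a product-topology limit $\varphi^{t_k}(u)\to v$, continuity of the right-hand side of (\ref{r:FK}) then yields $\dot v_j(0)=\lim_k\dot u_j(t_k;u)=0$ for all $j$, so $v\in\mathcal{E}$.

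For part (ii), I argue by contradiction. Failure of Banach density $1$ provides $\epsilon>0$ and sequences $a_k\in\mathbb{R}$, $S_k\to\infty$ with $|\{t\in[a_k,a_k+S_k]:\varphi^t(u)\notin U\}|>\epsilon S_k$; replacing $U$ by $\bigcup_j T^jU$ keeps $U$ open and makes it $T$-invariant without shrinking it. Form the spatio-temporal averages
$$\lambda_k=\frac{1}{(2N_k+1)S_k}\int_{a_k}^{a_k+S_k}\sum_{j=-N_k}^{N_k}\delta_{T^j\varphi^t(u)}\,dt$$
on the compact set $\mathcal{K}_n$, with $N_k\to\infty$, and extract a weak$^*$ limit $\bar\lambda$. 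Averaging over $j$ kills non-$T$-invariance in the limit (boundary defect of order $1/N_k$), and averaging over $t$ kills non-$\varphi^s$-invariance for each fixed $s$ (boundary defect of order $|s|/S_k$), so $\bar\lambda$ is $(\varphi,T)$-invariant and by the theorem just quoted is supported on $\mathcal{E}\subset U$. But $T$-invariance of $U$ forces $\lambda_k(\mathcal{K}_n\setminus U)>\epsilon$ for every $k$, and Portmanteau on the closed set $\mathcal{K}_n\setminus U$ yields $\bar\lambda(\mathcal{K}_n\setminus U)\geq\epsilon$, contradicting supp$\,\bar\lambda\subset\mathcal{E}\subset U$.

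The main obstacle is verifying that $\bar\lambda$ is a genuine $(\varphi,T)$-invariant probability measure: one must show that, testing $\lambda_k$ against any continuous bounded observable, the boundary defects in both averages vanish uniformly, which requires a uniform modulus of continuity for $\varphi^s$ on $\mathcal{K}_n$ together with tightness of $\{\lambda_k\}$ on the compact $\mathcal{K}_n$. A secondary technical point in (i) is the passage from $L^2$-integrability of $\dot u_j$ to pointwise decay, which rests on the $C^2$ hypothesis on $V$ propagating to a uniform-in-time bound on $\ddot u_j$ along trajectories of bounded spacing.
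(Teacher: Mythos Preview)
The paper does not give its own proof of this theorem; it is quoted as a result from \cite{Slijepcevic00} (and \cite{Gallay:01}, \cite{Gallay:12}). So there is no in-paper argument to compare against, and I will simply assess your proof.

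Your argument for (i) is correct and is essentially the standard one: the identity $\frac{d}{dt}\int V(u_0,u_1)\,d\mu_t=-\int|\dot u_0|^2\,d\mu_t$ (which is exactly the computation the paper carries out in (\ref{r:vzero}) with $F=0$) combined with Fubini, $T$-invariance, and the Barbalat-type upgrade from $L^2$ to pointwise decay is fine.

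Your argument for (ii), however, has a genuine gap at the step ``replacing $U$ by $\bigcup_j T^jU$''. You assume failure of Banach density $1$ for the \emph{original} $U$, obtaining $|\{t\in[a_k,a_k+S_k]:\varphi^t(u)\notin U\}|>\epsilon S_k$, and then enlarge $U$ to a $T$-invariant set. But enlarging $U$ can only shrink the bad set $\{t:\varphi^t(u)\notin U\}$, so the $\epsilon$-lower bound may be lost; and there is no reason an arbitrary open neighbourhood of $\mathcal{E}$ should contain a $T$-invariant open neighbourhood of $\mathcal{E}$ (the metric is not $T$-invariant, and $T$-iterates of points in $\mathcal{K}_n\setminus U$ can accumulate on $\mathcal{E}$). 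Consequently you cannot simultaneously have (a) $U$ is $T$-invariant and (b) the contradiction hypothesis, which your computation of $\lambda_k(\mathcal{K}_n\setminus U)$ requires.

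The clean fix is to drop the spatial averaging entirely. Take only the time averages $\lambda_k=\frac{1}{S_k}\int_{a_k}^{a_k+S_k}\delta_{\varphi^t(u)}\,dt$; then $\lambda_k(\mathcal{K}_n\setminus U)>\epsilon$ holds directly, and any weak limit $\bar\lambda$ is $\varphi$-invariant with $\bar\lambda(\mathcal{K}_n\setminus U)\geq\epsilon$. It then suffices to prove that \emph{every} $\varphi$-invariant probability measure on $\mathcal{K}_n$ (with $F=0$) is supported on $\mathcal{E}$, without assuming $T$-invariance. This follows from a localized energy balance: for each $M$, $\varphi$-invariance of $\bar\lambda$ gives
\[
\sum_{|j|\le M-1}\int|\dot u_j|^2\,d\bar\lambda \;=\; \int\big[V_1(u_{-M},u_{-M+1})\dot u_{-M}+V_2(u_{M-1},u_M)\dot u_M\big]\,d\bar\lambda,
\]
so the left side is non-decreasing in $M$ and uniformly bounded, hence convergent; this forces $\int|\dot u_{\pm M}|^2\,d\bar\lambda\to 0$, which in turn forces the right side to tend to $0$, so the limit of the left side is $0$ and every term vanishes. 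Thus $\bar\lambda(\mathcal{E})=1$, contradicting $\bar\lambda(\mathcal{K}_n\setminus U)\geq\epsilon$. The ``main obstacle'' you flag (that $\bar\lambda$ is a genuine invariant probability measure) is in fact routine Krylov--Bogolyubov on the compact $\mathcal{K}_n$; the real obstacle was the $T$-invariance of $U$.
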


We can rephrase Theorem \ref{t:seven}, and say that the $\omega $-limit set
consists of equilibria either almost surely with respect to space, or with
respect to time. One can not omit the "almost surely" part of the statement
and still obtain a $2d$ union of limit sets either in Theorem \ref{t:main5}
or \ref{t:seven}, as it is most likely infinite-dimensional, analogously to
the continuous space case (\cite{Miranville08}).

\section{The dynamics of a zero-set \label{s:zeroset}}

Here we first consider in general terms properties of the zero-set of a
solution of a linear, cooperative non-autonomous system of infinitely many
ordinary differential equations. We show that the number of zeroes in a
bounded domain can not increase and is strictly decreasing at singular
zeroes. We then extend these ideas to systems of infinitely many equations,
and establish a "zero-balance equation". Although some of the ideas
presented here are essentially for finite dimensional systems already
contained in \cite{Smillie84}, \cite{Smith:96}, our presentation follows
more closely Angenent, Fiedler and Mallet-Paret \cite{Angenent88a}, \cite%
{Angenent88}, \cite{Fiedler89} who developed it in the PDE setting. In
particular, we use the PDE-terms "regular" and "singular zeroes", rather
than "transversal" and "non-transversal intersections" of $v=0$, as we will
in a way consider the dynamics of the zero-set.

Let us consider a system of equations%
\begin{equation}
\frac{d}{dt}w_{j}(t)=a_{j}(t)w_{j-1}+b_{j}(t)w_{j+1}+c_{j}(t)w_{j}\text{%
,\quad }j\in \mathbb{Z}  \label{r:system}
\end{equation}%
We assume that for some $T_{0}<T_{1}$, the following holds:%
\begin{equation}
\begin{array}{l}
a_{j},b_{j},c_{j}\text{ continuous, uniformly bounded in }j,t\in \lbrack
T_{0},T_{1}]\text{,} \\ 
a_{j},b_{j}\geq \delta >0\text{ for all }j,t\in \lbrack T_{0},T_{1}]\text{.}%
\end{array}
\label{r:conditions}
\end{equation}

The condition $a_{j},b_{j}\geq \delta >0$ means that the system (\ref%
{r:system}) is cooperative (\cite{Smillie84}, \cite{Smith:96}). We can
consider (\ref{r:system}) to be a discrete analogue of parabolic
differential equations considered by Angenent (in real analytic case in \cite%
{Angenent88a}, in $C^{2}$ case in \cite{Angenent88}). We consider (\ref%
{r:system})\ on the space $Y$ of configurations of sub-exponential growth.
We define it as the set $Y\subset \mathbb{R}^{\mathbb{Z}}$ of $%
w=(w_{i})_{i\in \mathbb{Z}}$ such that%
\begin{equation*}
\lim \sup_{j\rightarrow \infty }\frac{\log |w_{j}|}{|j|}=0.
\end{equation*}%
Indeed, $Y$ is a Fr\'{e}chet space generated by the family of norms 
\begin{equation}
||u||_{n,\infty }=\sup_{j\in \mathbb{Z}}\exp (-|j|/n)|u|_{j},  \label{r:norm}
\end{equation}%
$n$ a positive integer.

We first give definitions. We say that $(j,t)\subset \mathbb{Z}\times
\lbrack T_{0},T_{1}]$ is a zero of a $w:[T_{0},T_{1}]\rightarrow Y$, if the
graph of $i\mapsto w_{i}(t)$ intersects $y=0$ in the interval $[j,j+1)$
(i.e. 
\begin{equation*}
w_{j}(t)+(w_{j+1}(t)-w_{j}(t))x=0
\end{equation*}%
has a solution for $x\in \lbrack 0,1)$). We say that $(j,t)$ is a singular
zero, if $w_{j}(t)=0$, and either of the following three cases holds:%
\begin{eqnarray*}
w_{j+1}(t) &=&0\text{, or} \\
w_{j-1}(t) &=&0\text{, or} \\
w_{j+1}(t)w_{j-1}(t) &>&0
\end{eqnarray*}%
(these are the discrete space analogues of $w_{x}(t)=0$);\ otherwise $(j,t)$
is a regular zero.\ We denote by $Z,S,R\subset \mathbb{Z}\times \lbrack
T_{0},T_{1}]$ the sets of zeroes, singular zeroes and regular zeroes of a
given $w(t)$. A singular zero has a degree $k$, if $k$ is the maximal number
of consecutive $j$ such that $(j,t)$ is a zero. Assume now $%
(j+1,t),...,(j+k,t)\in S$ is a regular zero of a finite degree $k$, $%
(j,t),(j+k+1,t)\not\in Z$. We distinguish two types of singular zeroes:%
\begin{eqnarray*}
\text{Type I, if }w_{j}(t)w_{j+k+1}(t) &<&0\text{,} \\
\text{Type II, if }w_{j}(t)w_{j+k+1}(t) &>&0\text{.}
\end{eqnarray*}%
Note that for singular zeroes of Type I, the degree is $k\geq 2$, and for
Type II, $k\geq 1$ (see Figure 1 in the Appendix 1). A zero can also be of
degree $\infty $, that is $w(t)\in Y$ such that for all $i\geq i_{0}$, $%
w_{i}(t)=0$, or such that for all $i\leq i_{0}$, $w_{i}(t)=0$, but $w$ not
identically $0$. When discussing zeroes, we will need to consider this,
intuitively degenerate and quite unlikely case separately with some care.

Given $w\in Y$, we define $z_{j}(w)$ to be $1$ if $w$ has a zero at $j$,
otherwise $0$. Let $z_{m,n}:Y\rightarrow \mathbb{N\cup }\{0\}$ be the
zero-counting function

\begin{equation*}
z_{m,n}(w)=\sum_{j=m}^{n-1}z_{j}(w)\text{,}
\end{equation*}%
where $m<n$ are integers.

We say that a solution of (\ref{r:system}) is $C^{1}$ if it is $C^{1}$ in
each coordinate. We can now state the discrete analogue of Angenent \cite%
{Angenent88}, Theorem D (i.e. for free boundary conditions not intersecting
zero):

\begin{lemma}
\label{t:zeroset}Assume $w:\mathbb{[}T_{0},T_{1}]\rightarrow Y$ is a $C^{1}$
solution of (\ref{r:system}) satisfying (\ref{r:conditions}), $m<n$
integers. Assume also $w_{j}(t)\not=0$ for $(j,t)\in \{m,n\}\times \lbrack
T_{0},T_{1}]$. Then

(i)\ The number of zeroes $z_{m,n}(w(t))$ is non-increasing.

(ii) If $w(t_{0})$ has a singular zero between $m$ and $n$, then $%
z_{m,n}(w(t))$ is strictly decreasing at $t=t_{0}$.

(iii)\ If $w(t_{0})$ has a singular zero between $m$ and $n$, then there
exists $\delta _{0}>0$ so that for all $0<|\delta |\leq $ $\delta _{0}$, $%
w(t_{0}+\delta )$ has no singular zeroes between $m$ and $n$.
\end{lemma}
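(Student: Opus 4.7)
My plan is to adapt the zero-counting technique of Angenent and Sturm for parabolic PDEs, as well as its finite-dimensional cooperative-ODE variant due to Smillie, to the present discrete, infinite-dimensional setting. Since the equation for $w_j$ couples only the three indices $j-1,j,j+1$, and the boundary hypothesis $w_m(t), w_n(t)\neq 0$ on $[T_0,T_1]$ forces by continuity the constant signs $\epsilon_m=\mathrm{sgn}\,w_m$ and $\epsilon_n=\mathrm{sgn}\,w_n$, I would restrict attention to the finite block $\{m,m+1,\dots,n\}$, treating $w_{m-1}$ and $w_{n+1}$ as continuous bounded forcing terms in the outer equations. The cooperativity $a_j, b_j\geq\delta>0$ is preserved under this restriction.

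The key local analysis is performed at each maximal chunk of consecutive indices $\{j_0+1,\dots,j_0+k\}\subset(m,n)$ on which $w(t_0)$ vanishes, with $w_{j_0}(t_0)\neq 0\neq w_{j_0+k+1}(t_0)$. Such a chunk contributes exactly $k$ to $z_{m,n}(w(t_0))$ and is Type I or Type II according as $w_{j_0}(t_0)w_{j_0+k+1}(t_0)$ is negative or positive. Using the ODE recursively, I would compute a Taylor expansion of $w_{j_0+i}(t_0+\delta)$ and show that its leading-order contribution comes either from left-propagation, with leading term $\delta^i a_{j_0+1}\cdots a_{j_0+i}w_{j_0}$, or from right-propagation, with leading term $\delta^{k+1-i}b_{j_0+k}\cdots b_{j_0+i}w_{j_0+k+1}$, whichever exponent is smaller. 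Counting the resulting sign changes in the sequence $w_{j_0},w_{j_0+1},\dots,w_{j_0+k+1}$ at $t_0+\delta$, the contribution of the chunk to $z_{m,n}$ is strictly smaller for $\delta>0$ than for $\delta<0$, because replacing $\delta>0$ by $\delta<0$ introduces extra sign alternations via the factors $(-1)^i$ and $(-1)^{k+1-i}$ in the leading terms. Regular zeros outside the chunks contribute a locally constant count by continuity, so the total $z_{m,n}$ is non-increasing (part (i)) and drops strictly across each singular time (part (ii)).

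For (iii), the same Taylor expansion shows that for $0<|\delta|\leq\delta_0$ every $w_{j_0+i}(t_0+\delta)$ in a chunk is nonzero with a definite sign; outside the chunks, $w_j(t_0)\neq 0$ guarantees $w_j(t_0+\delta)\neq 0$ by continuity, while an isolated regular zero at $t_0$ (with $w_{j-1}(t_0)w_{j+1}(t_0)<0$) is handled by the generic case $\dot w_j(t_0)\neq 0$, in which $w_j$ crosses zero transversally and no singular configuration arises, the remaining cases being handled by further expansion. A new singular zero at $t_0+\delta$ would require simultaneous vanishing of some $w_j$ together with a degenerate sign configuration on the neighbors, both ruled out for sufficiently small $|\delta|$ by the explicit Taylor bounds.

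The main obstacle will be the careful bookkeeping at chunks of large degree, particularly in Type I with odd $k$, where the two leading-order propagations meet at the midpoint $i=(k+1)/2$ with equal exponents but opposite signs; the leading sign of $w_{j_0+(k+1)/2}(t_0+\delta)$ then depends on the relative magnitudes of the two competing coefficients, requiring a dedicated case analysis. A second layer of technicality concerns the \emph{infinite-degree} degenerate case mentioned in the text, where $w_j(t_0)=0$ for all sufficiently large or sufficiently small $j$, for which the Taylor argument must be supplemented by a direct treatment of the infinite block. As the paper announces, the full verification of these exceptional situations would be delegated to Appendix~1.
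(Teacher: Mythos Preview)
Your approach is essentially the same as the paper's. The paper formalizes your Taylor-expansion argument as two auxiliary lemmas in Appendix~1: Lemma~\ref{l:one} proves by induction on the iteration level that $w_{j}(t_0+\delta)=d_j\delta^{j^*}+o(\delta^{j^*})$ with $j^*=\min\{j,k+1-j\}$, and Lemma~\ref{l:two} computes $d_j$ explicitly as the left-propagation product $w_{j_0}\prod a_i/j^*!$ or the right-propagation product $w_{j_0+k+1}\prod b_i/j^*!$, exactly as you describe; the sign-change count is then tabulated for the four parity/Type combinations.

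Two minor remarks. First, under the boundary hypothesis $w_m(t),w_n(t)\neq 0$ on $[T_0,T_1]$, every singular zero between $m$ and $n$ has finite degree at most $n-m-1$, so the infinite-degree case you flag cannot occur in this lemma; the paper treats that situation separately (Lemma~\ref{l:infitiy}) only for later use. Second, your worry about the midpoint collision in Type~I with odd $k$ is well placed---that is indeed where the two propagations meet with equal exponent and opposite signs, so $d_{(k+1)/2}$ may vanish---but observe that the zero count on either side of $t_0$ is insensitive to the sign of that single coefficient, since its neighbours already force one sign change through that position regardless; so no further case analysis is actually needed for the count (parts (i), (ii)), only for the non-vanishing claim in (iii).
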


We postpone the proof (as well as all the other proofs in this section) to
Appendix 1 as it is rather technical, and the approach is not related to the
rest of the paper.

We would now like to make general statements on values of $z_{m,n}$ for
arbitrary time and space intervals, including those when a zero is
"crossing" a boundary $\{m,n\}$ of a segment, and the assumptions of Theorem %
\ref{t:zeroset} do not hold.

We will associate to a solution $w:[T_{0},T_{1}]\rightarrow Y$ of (\ref%
{r:system})\ a family of functions $c_{i}(w;s,t)$, $i\in \mathbb{Z}$, $0\leq
s\leq t$, counting the number of times a zero enters minus the number of
times it leaves the segment $i\geq 0$ during the time interval $(s,t]$ (a
precise definition will be given below). In addition, we define $%
d_{i}(w;s,t) $, $i\in \mathbb{Z}$, $0\leq s\leq t$ as the number of zeroes
"disappearing" at the position $i$ in the time interval $(s,t]$. We will
omit the argument $w$ when it is clear from the context.

As the definitions of $c_{i},d_{i}$ are intuitively straightforward but
technically somewhat subtle, we give it axiomatically, and then prove their
existence in Appendix 1.

\begin{description}
\item[(A1)] \textbf{The zero-balance equation:} for all integers $m<n$ and
all $T_{0}\leq s<t\leq T_{1}$ the following holds:%
\begin{equation}
z_{m,n}(w(t))-z_{m,n}(w(s))=c_{m}(s,t)-c_{n}(s,t)-\sum_{j=m}^{n-1}d_{j}(s,t).
\label{r:balance}
\end{equation}

\item[(A2)] \textbf{Translation invariance}:\ $c_{j},d_{j}$ are invariant
for the translation $T$, that is%
\begin{eqnarray*}
c_{j}(Tw;s,t) &=&c_{j+1}(w;s,t)\text{,} \\
d_{j}(Tw;s,t) &=&d_{j+1}(w;s,t).
\end{eqnarray*}

\item[(A3)] \textbf{Regularity of }$c$: If $w_{j}(t)\not=0$ for each $t\in $ 
$[t_{0},t_{1}]$, then $c_{j}(t_{0},t_{1})=0$.

\item[(A4)] \textbf{Cardinality of }$\dot{d}$: If $%
\{(m,t_{0}),...,(m+k-1,t_{0})\}$ is a zero of degree $k$, then there is $%
\delta _{0}>0$ so that for all $0<\delta \leq \delta _{0}$,%
\begin{equation*}
\sum_{j=m}^{m+k-1}d_{j}(t_{0}-\delta ,t_{0}+\delta )=\left\{ 
\begin{array}{ll}
k-1 & \text{zero of Type I,} \\ 
k & \text{zero of Type II.}%
\end{array}%
\right.
\end{equation*}

If there are no singular zeros at $j$ for $t$ $\in \lbrack t_{0},t_{1}]$,
then $d_{j}(t_{0},t_{1})=0$.
\end{description}

We again postpone the proof of the following to the Appendix:

\begin{lemma}
\label{t:zbe}We can associate to each $C^{1}$ solution $w:[T_{0},T_{1}]%
\rightarrow Y$ of (\ref{r:system}) satisfying (\ref{r:conditions}) a family
of functions $c_{j}(w;s,t)\in \mathbb{Z}$\thinspace\ and $d_{j}(w;s,t)\in 
\mathbb{\{}0,1,..\}$\thinspace\ satisfying (A1)-(A4).
\end{lemma}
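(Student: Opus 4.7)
The plan is to construct $c_j(w;s,t)$ and $d_j(w;s,t)$ explicitly by tracking the time-evolution of the zero set, and to verify the four axioms a posteriori. The technical fact I will rely on throughout is Lemma \ref{t:zeroset}(iii): on any strip $\{m,\ldots,n\}\times[T_0,T_1]$ on which $w_m$ and $w_n$ do not vanish, singular zero events are isolated in time. In particular, by covering $\mathbb{Z}\times[T_0,T_1]$ by an exhaustion of such strips (which is possible away from the infinite-degree degenerate case), the set of singular zero events is countable on any compact spatial region, and by Lemma \ref{t:zeroset}(i) there are only finitely many of them in each such strip.

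I would first define $d_j$ as the bookkeeping of disappearing zeros. At each singular zero event occurring at some time $t_0$ of degree $k$ on positions $j_0+1,\ldots,j_0+k$, assign $k-1$ disappearances for Type I and $k$ disappearances for Type II, distributed over those $k$ positions by a canonical rule (for definiteness, always placing them at the leftmost available positions). Then $d_j(s,t)$ is the total count of disappearances attributed to position $j$ over all events in $(s,t]$. Translation invariance (A2) and the second clause of (A4) follow from the construction, and the first clause of (A4) encodes the chosen convention together with Lemma \ref{t:zeroset}(iii).

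Next I would define $c_j$ as a signed count of zero crossings. Between consecutive singular events, a standard implicit function theorem argument applied coordinatewise to the $C^1$ functions $t\mapsto w_j(t)$ shows that each zero is either stationary at its integer position $i$ or migrates to $i\pm 1$ by a sign change of $w_i$ or $w_{i+1}$; the cooperative assumption (\ref{r:conditions}) keeps the relevant sign patterns generic. I then set $c_j(s,t)$ to be the signed net number of zero trajectories crossing the wall between positions $j-1$ and $j$ during $(s,t]$, rightward positive and leftward negative. Axioms (A2) and (A3) then hold by inspection: translation invariance is built in, and if $w_j$ never vanishes on $[t_0,t_1]$ no crossing through the wall at $j$ can occur.

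The zero-balance equation (A1) follows by a bookkeeping argument: partition $(s,t]$ into finitely many subintervals separated by singular events in $\{m+1,\ldots,n-1\}$ and by wall crossings at $m$ or $n$; on each subinterval $z_{m,n}$ is constant by Lemma \ref{t:zeroset}(i) applied on a slightly wider strip, and the total change telescopes into the sum of event contributions, which by construction equals $c_m(s,t)-c_n(s,t)-\sum_{j=m}^{n-1}d_j(s,t)$. The main obstacle will be the degenerate case of zeros of infinite degree: here the naive counting may fail at the boundary because $w_m$ or $w_n$ can vanish identically on a subinterval. My approach is to sidestep this by small perturbations of the integer bounds $m,n$, using Lemma \ref{t:zeroset}(iii) to guarantee that for a.e.\ choice of boundary the strip is "admissible", and then to show that the resulting $c_j$, $d_j$ are independent of the perturbation by comparing two admissible choices via a third strip. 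A secondary subtlety is that the chosen leftmost convention for $d_j$ must not affect the sum $\sum_{j=m}^{n-1} d_j$, which is immediate but should be noted for the axiom to be well-posed.
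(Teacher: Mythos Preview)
Your approach is essentially the same as the paper's: both construct $c_j$ and $d_j$ explicitly by tracking the zero set, distribute the ``lost'' zeros among the positions of each singular event to define $d_j$, define $c_j$ as a signed boundary-crossing count assembled from a finite partition of the time interval, and verify (A1)--(A4) a posteriori. The paper's distribution convention for $d_j$ is different from your leftmost rule (it uses a symmetric scheme, skipping the middle position for Type~I and distinguishing a set $S^-$ counted on half-open intervals $(s,t]$ from $S^+$ counted on $[s,t)$), but as you note, only the total over the block matters for (A1), so either convention works.

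The one place where your write-up diverges substantively is the infinite-degree case, and here your proposed fix is off. You cannot ``perturb the integer bounds $m,n$'': the axioms are stated for integer $m,n$, and $z_{m,n}$ is only defined at integers, so there is no a.e.\ choice of admissible boundary to fall back on. More importantly, your stated concern---that $w_m$ or $w_n$ might vanish identically on a subinterval---does not in fact occur: by Lemma~\ref{l:infitiy}, at an infinite-degree zero one has $w_j(t)=d_j t^j+o(t^j)$ with $d_j\neq 0$, so each $w_j$ vanishes only at the isolated instant $t_0$. The paper handles this case directly rather than by perturbation: it simply declares that every position $(1,t_0),(2,t_0),\ldots$ of an infinite-degree zero belongs to $S^+$, so each contributes one unit to the corresponding $d_j$. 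With that convention the local bookkeeping near $t_0$ goes through exactly as in the finite-degree case, and no limiting or perturbation argument is needed. Replacing your perturbation paragraph with this direct assignment closes the gap.
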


Now recall the partial order on $\mathbb{R}^{\mathbb{Z}}$:%
\begin{equation*}
w\geq v\text{ if for all }j\text{, }w_{j}\geq v_{j}\text{.}
\end{equation*}%
The following well-known monotonicity property (\cite{Baesens05}, \cite%
{Smith:96}) can be understood as a special case of the discussion above;\
for the sake of completeness a short proof is also in Appendix 1.

\begin{lemma}
\label{p:monotonicity}Assume $w:[T_{0},T_{1}]\rightarrow Y$ is a solution of
(\ref{r:system}) satisfying (\ref{r:conditions}). Then if $w(0)\geq 0$, then
for all $t>0$, $w(t)\geq 0$.
\end{lemma}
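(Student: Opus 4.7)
The plan is a maximum principle argument adapted to the infinite-dimensional setting. The system is cooperative ($a_j, b_j \geq \delta > 0$), so one would like to mimic the standard finite-dimensional ``first contact'' argument, but because the index set $\mathbb{Z}$ is infinite, the pointwise infimum $\inf_j w_j(t)$ need not be attained at any finite $j$. I would overcome this by adding a positive barrier that grows exponentially in $|j|$, faster than any element of $Y$ can grow.

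Concretely, fix $\alpha > 0$ and, for $\varepsilon > 0$ and $L > 0$ to be chosen, set
\begin{equation*}
\phi_j(t) = w_j(t) + \varepsilon e^{Lt} e^{\alpha |j|}.
\end{equation*}
A direct computation yields
\begin{equation*}
\dot\phi_j - a_j \phi_{j-1} - b_j \phi_{j+1} - c_j \phi_j = \varepsilon e^{Lt} e^{\alpha |j|}\bigl[L - a_j e^{\alpha(|j-1|-|j|)} - b_j e^{\alpha(|j+1|-|j|)} - c_j\bigr].
\end{equation*}
Since $|j \pm 1| - |j| \in \{-1, +1\}$ and the coefficients are uniformly bounded by (\ref{r:conditions}), choosing $L$ large (depending only on $\alpha$ and the uniform bound on the coefficients) makes the bracket nonnegative uniformly in $j, t$, so $\phi$ satisfies $\dot\phi_j \geq a_j \phi_{j-1} + b_j \phi_{j+1} + c_j \phi_j$. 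Continuity of $w:[T_0,T_1]\to Y$ ensures that $M_n := \sup_t \|w(t)\|_{n,\infty}$ is finite for every $n$; picking $n > 1/\alpha$ gives $|w_j(t)| \leq M_n e^{|j|/n}$ with $1/n < \alpha$, hence $\phi_j(0) \geq \varepsilon$ and $\phi_j(t) > 0$ for $|j|$ larger than some $J_0$ independent of $t$.

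If the conclusion failed, then by continuity of $\inf_j \phi_j(t) = \min_{|j| \leq J_0} \phi_j(t)$ there would be a first time $t^* > 0$ and a finite index $j^*$ with $\phi_{j^*}(t^*) = 0$ and $\phi_j(t) > 0$ for $t < t^*$, giving $\dot\phi_{j^*}(t^*) \leq 0$. The differential inequality forces $\dot\phi_{j^*}(t^*) \geq a_{j^*}\phi_{j^*-1}(t^*) + b_{j^*}\phi_{j^*+1}(t^*) \geq 0$ by cooperativity, so $\phi_{j^* \pm 1}(t^*) = 0$; iterating yields $\phi_j(t^*) = 0$ for every $j \in \mathbb{Z}$, contradicting positivity for $|j| > J_0$. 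Sending $\varepsilon \to 0$ gives $w(t) \geq 0$.

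The main obstacle is precisely the infinite number of equations, which is why the barrier $\varepsilon e^{Lt} e^{\alpha |j|}$ is introduced: it converts a global comparison on $\mathbb{Z}$ into a local one on the finite window $[-J_0, J_0]$ by exploiting the sub-exponential growth built into the Fréchet space $Y$. Once this localization is achieved, the remainder is a routine cooperative-system maximum principle.
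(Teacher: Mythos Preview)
Your argument is correct. It is, however, a genuinely different route from the paper's proof. The paper proceeds by the exponential substitution $\overline{w}_j(t)=e^{Ct}w_j(t)$ with $C=\sup|c_j|$, which turns the system into one with all coefficients $\overline{a}_j,\overline{b}_j,\overline{c}_j\geq 0$; it then observes that the solution in any Banach norm $\|\cdot\|_{n,\infty}$ is the limit of Picard iterates $\overline{w}^{(n)}(t)=\overline{w}(0)+\int_0^t\overline{A}(s)\overline{w}^{(n-1)}(s)\,ds$, and since the matrix $\overline{A}$ has nonnegative entries, nonnegativity is preserved at every iterate and hence in the limit. Your approach is a maximum-principle/barrier argument: you add an exponentially growing supersolution $\varepsilon e^{Lt}e^{\alpha|j|}$, use the sub-exponential growth built into $Y$ to localize the first contact to a finite window $|j|\leq J_0$, and then run the standard cooperative first-touch contradiction. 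Both arguments rely implicitly on well-posedness in the weighted norms $\|\cdot\|_{n,\infty}$ (you use it to get $M_n<\infty$; the paper uses it to invoke Picard convergence), so neither is more elementary in its hypotheses. The Picard-iteration proof is shorter and exploits linearity directly; your barrier proof is more robust in that it would adapt with little change to nonlinear cooperative systems or to strict-positivity refinements, and it makes the role of the space $Y$ more transparent.
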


\section{The dynamics of intersections \label{s:intersections}}

In this section we consider the dynamics of intersections $w=u^{1}-u^{2}$ of
two solutions $u^{1},u^{2}$ of (\ref{r:FK}) and of derivatives $w=du/dt$ of
a solution $u$ of (\ref{r:FK}). We first show that the results of the
previous section apply to $u^{1}-u^{2}$ in both the AC and DC case, and to $%
du/dt$ in the DC\ case. We then establish local and global existence of a
continuous semiflow generated by (\ref{r:FK}). Finally in a series of four
lemmas, we establish the key tool:\ that the singular zeroes of $u^{1}-u^{2}$
and $du/dt$ are persist for small perturbations in the product topology.
Here we treat the cases of singular zeroes of finite and infinite degree
separately, as the later case is technically more different.

\subsection{Existence of semiflow.}

Recall the definition of $K_{n}$ as the set of all $u\in \mathbb{R}^{Z}$
such that $|u_{j+1}-u_{j}|\leq n$, and fix $n\in \mathbb{N}$.

\begin{proposition}
\label{p:existence}Assume $u^{1},u^{2},u:[T_{0},T_{1}]\rightarrow K_{n}\,\ $%
are solutions of (\ref{r:FK}). Then $w=u^{2}-u^{1}$ in all cases, and $%
w=du/dt$ in the DC\ case, are solutions of (\ref{r:system}) with some $%
a_{i},b_{i},c_{i}$ satisfying (\ref{r:conditions}).
\end{proposition}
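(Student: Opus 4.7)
The plan is to obtain the linearization in each of the two cases by an elementary computation, and then to check the conditions (\ref{r:conditions}) using the structural assumptions on $V$ (twist, periodicity, $C^2$) together with the fact that configurations stay in $K_n$.

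For $w=u^2-u^1$, I would apply the fundamental theorem of calculus along the segment from $(u^1_{j-1},u^1_j,u^1_{j+1})$ to $(u^2_{j-1},u^2_j,u^2_{j+1})$. Writing
\[
V_2(u^2_{j-1},u^2_j)-V_2(u^1_{j-1},u^1_j)=\Bigl(\int_0^1 V_{21}(\cdot)\,ds\Bigr)w_{j-1}+\Bigl(\int_0^1 V_{22}(\cdot)\,ds\Bigr)w_j,
\]
and analogously for the $V_1$ difference, one reads off
\[
a_j(t)=-\!\int_0^1\! V_{21}\bigl(u^1_{j-1}+sw_{j-1},u^1_j+sw_j\bigr)\,ds,\quad b_j(t)=-\!\int_0^1\! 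V_{12}\bigl(u^1_j+sw_j,u^1_{j+1}+sw_{j+1}\bigr)\,ds,
\]
with $c_j(t)$ the sum of the remaining $V_{22}$ and $V_{11}$ integrals (with the correct sign). Note the $F(t)$ term cancels in the subtraction, so no AC/DC distinction appears here.

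For $w=du/dt$ in the DC case, I would first argue enough regularity to differentiate (\ref{r:FK}) in $t$: since $V\in C^2$ and $F$ is constant, the right-hand side of (\ref{r:FK}) is $C^1$ in the configuration, so any $C^1$ solution is automatically $C^2$ in $t$. Differentiating gives exactly the form (\ref{r:system}) with $a_j(t)=-V_{21}(u_{j-1}(t),u_j(t))$, $b_j(t)=-V_{12}(u_j(t),u_{j+1}(t))$, and $c_j(t)=-V_{22}(u_{j-1},u_j)-V_{11}(u_j,u_{j+1})$. In the AC case the $F'(t)$ term obstructs a linear equation in $w=du/dt$, which is why the statement restricts to DC.

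It remains to verify (\ref{r:conditions}). The lower bound $a_j,b_j\geq\delta$ comes directly from the twist condition $V_{12}\leq-\delta$, together with the equality of mixed partials $V_{12}=V_{21}$ granted by $V\in C^2$. For uniform boundedness of all coefficients, the key point is that along solutions in $K_n$ the arguments $(u^k_{j-1}+sw_{j-1},u^k_j+sw_j)$ lie in the strip $\{(u,v):|v-u|\leq n\}$; using periodicity $V(u+1,v+1)=V(u,v)$ (hence the same for every $V_{ij}$) this strip reduces modulo the diagonal to a compact set, so each $V_{ij}$ is uniformly bounded there. Continuity of $a_j,b_j,c_j$ in $t$ follows from continuity of $t\mapsto u(t)$ in the product topology and continuity of $V_{ij}$, applying dominated convergence to the integral representations.

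The main technical obstacle is really the regularity step in the $du/dt$ case: one has to confirm that $C^1$-in-time solutions of the infinite system are automatically $C^2$ so that termwise differentiation of (\ref{r:FK}) is legitimate. This is standard once one notices that each $du_j/dt$ depends only on $u_{j-1},u_j,u_{j+1}$ through $C^1$ functions, but it does use that we are working coordinatewise so no uniform convergence issue arises. Everything else is a computation that packages the smoothness, periodicity and twist hypotheses into the cooperative linear form required by (\ref{r:system})--(\ref{r:conditions}).
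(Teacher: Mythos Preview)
Your proof is correct and follows essentially the same approach as the paper. The only cosmetic difference is that you write the coefficients $a_j,b_j,c_j$ as integrals along the segment joining $(u^1_{j-1},u^1_j,u^1_{j+1})$ to $(u^2_{j-1},u^2_j,u^2_{j+1})$, whereas the paper uses difference quotients (changing one variable at a time and invoking the mean value theorem); both yield coefficients satisfying (\ref{r:conditions}) for the same reasons, and your treatment of the regularity needed to differentiate in the DC case is in fact more explicit than the paper's.
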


\begin{proof}
If $w=u^{2}-u^{1}$, we define%
\begin{eqnarray*}
a_{i} &=&\frac{V_{2}(u_{i-1}^{2},u_{i}^{1})-V_{2}(u_{i-1}^{1},u_{i}^{1})}{%
u_{i-1}^{1}-u_{i-1}^{2}}, \\
b_{i} &=&\frac{V_{1}(u_{i}^{1},u_{i+1}^{2})-V_{1}(u_{i}^{1},u_{i+1}^{1})}{%
u_{i+1}^{1}-u_{i+1}^{2}}, \\
c_{i} &=&\frac{%
V_{2}(u_{i-1}^{2},u_{i}^{2})-V_{2}(u_{i-1}^{2},u_{i}^{1})+V_{1}(u_{i}^{2},u_{i+1}^{2})-V_{1}(u_{i}^{1},u_{i+1}^{2})%
}{u_{i}^{1}-u_{i}^{2}}
\end{eqnarray*}%
(naturally extended to $a_{i}=-V_{12}$, $b_{i}=-V_{12}$, $%
c_{i}=-V_{11}-V_{22}\,\ $\ in the cases $u_{i-1}^{1}=u_{i-1}^{2}$, $%
u_{i+1}^{1}=u_{i+1}^{2}$, $u_{i}^{1}=u_{i}^{2}$ respectively). Then by the
twist condition and the mean value theorem, $a_{i},b_{i}\geq \delta >0$, and
by periodicity of $V$ and a simple compactness argument, $%
a_{i}(t),b_{i}(t),c_{i}(t)$ are bounded, thus (\ref{r:conditions}) holds. As 
$u^{1}(t),u^{2}(t)\in K_{n}$, the difference $j\mapsto
|u_{j}^{1}(t)-u_{j}^{2}(t)|$ grows at most linearly in $|j|\,$, thus $%
u^{1}(t)-u^{2}(t)\in Y$.

Now let $w=du/dt$ in the DC\ case. Then by differentiating (\ref{r:FK}) we
see that (\ref{r:system}), (\ref{r:conditions}) hold with%
\begin{eqnarray*}
a_{i} &=&-V_{12}(u_{i-1},u_{i}), \\
b_{i} &=&-V_{12}(u_{i},u_{i+1}), \\
c_{i} &=&-V_{22}(u_{i-1},u_{i})-V_{11}(u_{i},u_{i+1})\text{.}
\end{eqnarray*}
\end{proof}

We now establish that the equation (\ref{r:FK})\ generates a smooth semiflow
on $\mathcal{K}_{n}=K_{n}/R$, denoted by $\varphi ^{t}$ and $\varphi $.

\begin{lemma}
\label{l:existence}The equation (\ref{r:FK}) generates a continuous semiflow
on $\mathcal{K}_{n}$, and a continuous local flow on $\mathcal{X}$.

For each $i\in \mathbb{Z}$, the solution $t\mapsto u_{i}(t)$ is $C^{2}$;\
and if $F,V$ are real analytic, $t\mapsto u_{i}(t)$ is also real analytic,
all on all open sets of all $t$ for which the solution $u(t)\in \mathcal{X}$.
\end{lemma}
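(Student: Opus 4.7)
The plan is to treat \eqref{r:FK} as a Banach-space ODE after subtracting a reference configuration, and then bootstrap to a global semiflow on $\mathcal{K}_n$ via an a priori bound on the spacing.

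Fix $u^{0}\in K_{n}$ and seek the solution in the form $u(t)=u^{0}+w(t)$ with $w(t)\in \ell^{\infty}(\mathbb{Z})$. Because $V\in C^{2}$ and $V(u+1,v+1)=V(u,v)$, the partial derivatives $V_{1},V_{2}$ are Lipschitz on every strip $\{(u,v):|v-u|\le m\}$. Consequently, for $w$ in any bounded ball of $\ell^{\infty}$, the vector field
\[
G(w,t)_{j}=-V_{2}(u^{0}_{j-1}+w_{j-1},u^{0}_{j}+w_{j})-V_{1}(u^{0}_{j}+w_{j},u^{0}_{j+1}+w_{j+1})+F(t)
\]
sends $\ell^{\infty}$ into $\ell^{\infty}$ and is locally Lipschitz in the $\ell^{\infty}$-norm (the coordinate $G_{j}$ depends only on $w_{j-1},w_{j},w_{j+1}$, so nearest-neighbor Lipschitz bounds transfer coordinate-wise). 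Picard--Lindel\"of in $\ell^{\infty}$ yields a unique $C^{1}$ local solution $w:[0,\tau]\to \ell^{\infty}$ together with continuous dependence on $u^{0}$ in the $\ell^{\infty}$-topology. Since \eqref{r:FK} commutes with $Ru=u+1$, this descends to the quotient and gives the continuous local flow on $\mathcal{X}$.

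To obtain the global semiflow on each $\mathcal{K}_{n}$, I would establish an a priori bound on the spacing $y(t):=\sup_{j}|u_{j+1}(t)-u_{j}(t)|$. From \eqref{r:FK},
\[
\tfrac{d}{dt}(u_{j+1}-u_{j}) = V_{2}(u_{j-1},u_{j})-V_{2}(u_{j},u_{j+1})+V_{1}(u_{j},u_{j+1})-V_{1}(u_{j+1},u_{j+2}).
\]
Periodicity $V(u+1,v+1)=V(u,v)$ reduces each $V_{i}$ to a function of $(u\,\mathrm{mod}\,1,v-u)$, so $V_{i}\in C^{2}$ together with periodicity gives $|V_{i}(u,v)|\le A+B|v-u|$ for universal constants $A,B$. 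This produces an estimate of the form $|\dot y|\le A'+B'y$ (in a Dini/integral sense along the supremum), and Gr\"onwall then forces $y(t)$ to remain finite on every bounded time interval. Thus every $u^{0}\in \mathcal{K}_{n}$ generates a solution defined for all $t\ge 0$ with $u(t)\in \mathcal{K}_{m(n,t)}$, which together with the local flow gives a global semiflow on $\mathcal{K}_{n}$. Continuity in the product topology follows because continuity in $\ell^{\infty}$ on bounded sets implies coordinate-wise continuity, and the product topology is generated by finitely many coordinates.

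For the regularity claims, since $V\in C^{2}$ the right-hand side of \eqref{r:FK} is $C^{1}$ in its arguments; combined with $u_{i}(\cdot)\in C^{1}$ one gets $\dot u_{i}(\cdot)\in C^{1}$, i.e.\ $t\mapsto u_{i}(t)$ is $C^{2}$. When $V,F$ are real analytic, the Banach-space vector field $G(\cdot,t):\ell^{\infty}\to \ell^{\infty}$ is real analytic (uniform bounds on derivatives of $V$ on strips give a uniform radius of convergence for the coordinate-wise power series), and the analytic Picard theorem yields a real analytic local solution $t\mapsto w(t)\in \ell^{\infty}$; evaluation at coordinate $i$ gives real analyticity of $t\mapsto u_{i}(t)$ on every open interval where $u(t)\in \mathcal{X}$.

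The main obstacle is the a priori bound in the second paragraph: the vector field is \emph{not} globally bounded on $\mathcal{X}$ because $V_{1},V_{2}$ grow linearly in the diagonal direction, so naive maximum-principle arguments for the spacing do not close. One must combine the twist/periodicity geometry (linear growth of $V_{i}$ in $v-u$) with a Gr\"onwall-type argument at the level of $\sup_{j}|u_{j+1}-u_{j}|$; getting this estimate cleanly is what guarantees that the local Picard solution in $\ell^{\infty}$ actually persists for all $t\ge 0$ in the product topology on each $\mathcal{K}_{n}$.
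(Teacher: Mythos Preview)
Your local existence and regularity arguments are essentially the same as the paper's. The genuine gap is in the second paragraph: your Gr\"onwall bound $|\dot y|\le A'+B'y$ on $y(t)=\sup_j|u_{j+1}(t)-u_j(t)|$ only yields $y(t)\le C(n)e^{B't}$, i.e.\ $u(t)\in\mathcal{K}_{m(n,t)}$ with $m(n,t)$ growing in $t$. That gives global existence in $\mathcal{X}$, but it does \emph{not} establish that $\mathcal{K}_n$ is forward invariant, and hence does not give a semiflow \emph{on} $\mathcal{K}_n$. The paper needs this invariance later (compactness of $\mathcal{M}(\mathcal{K}_n)$, well-definedness of $\varphi_*^t$ on it), so this is not a matter of terminology.

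The paper closes the gap with the monotone structure, not with Gr\"onwall: rewrite $u\in K_n$ as the pair of order relations $Tu-n\le u\le Tu+n$. Since $T$ and $R$ commute with \eqref{r:FK}, both $Tu+n$ and $Tu-n$ are again solutions, and the difference $w(t)=(Tu+n)(t)-u(t)$ (resp.\ $u(t)-(Tu-n)(t)$) satisfies a linear cooperative system as in Proposition~\ref{p:existence}. Lemma~\ref{p:monotonicity} (the comparison principle) then gives $w(t)\ge 0$ for all $t\ge 0$, so the inequalities persist and $u(t)\in K_n$ for all $t\ge 0$. Your last paragraph correctly senses that a ``maximum-principle'' ingredient is missing; this is exactly it, and it replaces the Gr\"onwall step entirely. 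A secondary point: continuity in the product topology does not follow from $\ell^\infty$-continuity alone (initial data converge only coordinatewise, not in $\ell^\infty$); the paper obtains it by redoing Picard in the weighted norms $\|\cdot\|_{n,\infty}$ of \eqref{r:norm}, which generate the product topology on $\mathcal{K}_n$.
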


\begin{proof}
It is easy to show by applying the well-known results on existence of
solutions on Banach spaces, that for $u(0)\in \mathcal{K}_{n}$ the solution
of (\ref{r:FK})\ exists locally, and $t\mapsto u(t)-u(0)$ is continuous in $%
l_{\infty }(\mathbb{Z})$. Thus (\ref{r:FK}) generates a local flow on $%
\mathcal{X}$.

We can show that the solution depends continuously on initial conditions in
the product topology, by for example establishing local existence of
solutions in any of the norms $||.||_{n,\infty }$ introduced in the previous
section (note that these norms induce the product topology on $\mathcal{K}%
_{n}$ and $\mathcal{X}$). Finally, given a positive integer $n$, the
property $u\in \mathcal{K}_{n}$ can be written as%
\begin{equation*}
Tu+n\geq u\geq Tu-n,
\end{equation*}%
which is by Lemma \ref{p:monotonicity} and Proposition \ref{p:existence}
applied to $w(t)=(Tu+n)(t)-u(t)$, $w(t)=u(t)-(Tu-n)(t)$ invariant forward in
time. We conclude that if $u(0)\in \mathcal{K}_{n}$, then $u(t)\in \mathcal{K%
}_{n}$ for all $t\geq 0$.

Regularity follows from (\ref{r:FK}). In the real analytic case we use real
analyticity of $F,V$ and check inductively that the derivatives satisfy the
characterization of real analyticity (\cite{Krantz02}, Proposition 1.2.12).
\end{proof}

We will often fix an integer $n$ and write $\mathcal{K}$ omitting its
superscript.

Generally the results to follow regarding the difference $u^{1}-u^{2}$ will
hold in both AC\ and DC\ cases, and the results regarding the derivative $%
du/dt$ in the DC case only.

\subsection{\textbf{The AC\ and DC\ cases. }}

We first show that singular zeroes of a finite degree persist for small
perturbations in the product topology. Without loss of generality we
consider zeroes at $t_{0}=0$.

\begin{lemma}
\label{l:finite}Assume $u^{1}(0),u^{2}(0)\in \mathcal{K}$ such that $%
u^{1}(0)-u^{2}(0)$ has a singular zero of a finite degree $k$ at $%
(m+1,t_{0}),...,(m+k,t_{0})$. Then for all sufficiently small $\varepsilon
>0\,$there exist open neighborhoods $U^{1}$ and $U^{2}$ of $u^{1}(0)$, $%
u^{2}(0)$, such that for all $v^{1}(0),v^{2}(0)\in U^{1}\times U^{2},$%
\begin{equation}
\sum_{j=m+1}^{m+k}d_{j}(v^{1}-v^{2};-\varepsilon ,+\varepsilon )\geq \frac{k%
}{2}\text{.}  \label{r:finite}
\end{equation}
\end{lemma}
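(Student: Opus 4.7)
The plan is to apply the zero-balance equation (A1) on the spatial bracket $[m, m+k+1]$, read off the exact drop in zero count over $(-\varepsilon, \varepsilon)$ for the unperturbed $w := u^1 - u^2$ from axiom (A4), and then transfer this drop to the perturbed $v^1 - v^2$ using product-topology continuity of the semiflow. The main technical obstacle will be to show that the zero count $z_{m, m+k+1}$ at $t = \pm\varepsilon$ is preserved under small perturbation, which requires a careful tracking of how regular value-zeros can migrate between adjacent integer positions.

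For the unperturbed count, since the singular zero has finite degree $k$ we have $w_m(0), w_{m+k+1}(0) \neq 0$. Using Lemma~\ref{t:zeroset}(iii) and continuity, I pick $\varepsilon_0 > 0$ so that $w$ has no singular zero at any $(j, t)$ with $j \in \{m, \ldots, m+k+1\}$ and $t \in [-\varepsilon_0, \varepsilon_0] \setminus \{0\}$, and so that $w_m(t), w_{m+k+1}(t) \neq 0$ throughout $[-\varepsilon_0, \varepsilon_0]$. Fix $\varepsilon \in (0, \varepsilon_0]$. Axioms (A3) and (A4) then give $c_m(w) = c_{m+k+1}(w) = 0$ and $d_m(w) = 0$, so (A1) yields
\[
\sum_{j=m+1}^{m+k} d_j(w; -\varepsilon, \varepsilon) = z_{m, m+k+1}(w(-\varepsilon)) - z_{m, m+k+1}(w(\varepsilon)),
\]
which by (A4) equals $k-1$ (Type I, $k \geq 2$) or $k$ (Type II, $k \geq 1$); both are at least $k/2$.

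For the perturbation step, by continuity of the semiflow in the product topology (Lemma~\ref{l:existence}) I choose neighborhoods $U^1, U^2$ of $u^1(0), u^2(0)$ so that $(v^1 - v^2)_j(t)$ is uniformly close to $w_j(t)$ on $\{m, \ldots, m+k+1\} \times [-\varepsilon, \varepsilon]$; shrinking further so that the approximation error at $j = m, m+k+1$ is less than $\frac{1}{2} \min_t \min(|w_m(t)|, |w_{m+k+1}(t)|)$, the values $(v^1 - v^2)_m, (v^1 - v^2)_{m+k+1}$ stay nonzero with the same signs as $w$, giving $c_m = c_{m+k+1} = 0$ and $d_m = 0$ for $v^1 - v^2$ by (A3)--(A4). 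It remains to check that $z_{m, m+k+1}((v^1 - v^2)(\pm\varepsilon)) = z_{m, m+k+1}(w(\pm\varepsilon))$: every zero of $w(\pm\varepsilon)$ in this range is regular by choice of $\varepsilon_0$, hence either a sign-change zero ($w_j w_{j+1} < 0$, stable under perturbation) or a value-zero ($w_j = 0$ with $w_{j-1}, w_{j+1}$ of opposite signs, which under product-topology perturbation persists as a single zero at either $j$ or $j - 1$, depending on the sign of the small perturbed value). Since $w_m, w_{m+k+1} \neq 0$ rule out value-zeros at the boundary, every such migration lands inside $\{m, \ldots, m+k\}$ and no zero migrates into the range from outside, so the zero count is preserved. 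Applying (A1) to $v^1 - v^2$ now yields the same right-hand side as above, giving $\sum_{j=m+1}^{m+k} d_j(v^1 - v^2; -\varepsilon, \varepsilon) = k-1$ or $k$, in particular at least $k/2$.
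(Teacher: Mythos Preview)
Your proof is correct and follows essentially the same route as the paper's: use Lemma~\ref{t:zeroset}(iii) and continuity to isolate a time window in which the boundary values $w_m,w_{m+k+1}$ stay nonzero and no other singular zeros occur, apply (A1)--(A4) to read off the exact drop $k-1$ or $k$ for $w=u^1-u^2$, then transfer this to $v^1-v^2$ by matching the zero counts at $t=\pm\varepsilon$ and reapplying (A1). Your treatment is in fact slightly more careful than the paper's in two respects: you use the natural bracket $[m,m+k+1]$ (the paper writes $z_{m+1,m+k}$, which appears to be an index slip), and you spell out why $z_{m,m+k+1}$ at $t=\pm\varepsilon$ is stable under perturbation, including the case where a regular value-zero at some $j$ may shift to $j-1$.
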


\begin{proof}
By continuity and Lemma \ref{t:zeroset}, (iii) there exists $\delta
_{0}>0\,\ $so that $u^{1}(t)-u^{2}(t)$ is not zero on $\{m,m+k+1\}\times
\lbrack -\delta _{0},\delta _{0}]\,$and so that $u^{1}(t)-u^{2}(t)$ has no
other singular zeroes between $0$ and $k+1$ for $t\in \lbrack -\delta
_{0},\delta _{0}]$. By Lemma \ref{t:zbe}, (A3) and then (A1) and (A4), we
see that for all $0<\varepsilon \leq $ $\delta _{0}$ 
\begin{eqnarray}
z_{m+1,m+k}(u^{1}(\varepsilon )-u^{2}(\varepsilon ))
&-&z_{m+1,m+k}(u^{1}(-\varepsilon )-u^{2}(-\varepsilon ))  \notag \\
&=&-\left\{ 
\begin{array}{ll}
k-1 & \text{zero of Type I} \\ 
k & \text{zero of Type II}%
\end{array}%
\right. \leq -\frac{k}{2}.  \label{r:k}
\end{eqnarray}

Now, given any $0<\varepsilon \leq $ $\delta _{0}$, as all the zeroes of $%
u^{1}(\pm \varepsilon )-u^{2}(\pm \varepsilon )$ are regular at $%
i=m+1,...,m+k$, we can by continuity find neighborhoods $U^{1}$ and $U^{2}$
of $u^{1}(0)$ and $u^{2}(0)$ so that for any $v^{1}(0),v^{2}(0)\in
U^{1}\times U^{2}$,%
\begin{eqnarray*}
z_{m+1,m+k}(u^{1}(\varepsilon )-u^{2}(\varepsilon ))
&=&z_{m+1,m+k}(v^{1}(\varepsilon )-v^{2}(\varepsilon )), \\
z_{m+1,m+k}(u^{1}(-\varepsilon )-u^{2}(-\varepsilon ))
&=&z_{m+1,m+k}(v^{1}(-\varepsilon )-v^{2}(-\varepsilon )),
\end{eqnarray*}%
and so that $v^{1}(t)-v^{2}(t)$ is not zero on $\{m,m+k+1\}\times \lbrack
-\varepsilon ,\varepsilon ]$. The claim now follows from applying Lemma \ref%
{t:zbe}, (A3) and then (A1) to $v^{2}-v^{1}$.
\end{proof}

The zeroes of infinite degree however do not necessarily persist for small
perturbations in the product topology, as a zero in a small neighborhood can
"escape" towards $\pm \infty $. We can however show the following:

\begin{lemma}
\label{l:infinite}Assume $u(0),v(0)\in \mathcal{K}$ such that $u(0)-v(0)$
has a zero of degree $\infty $.\ Then for each $\varepsilon >0$ there exist $%
|t_{0}|,|t_{1}|<\varepsilon $ so that either $u(t_{0})-v(t_{1})$ or $%
u(t_{0})-u(t_{1})$, $u(t_{0})\not=u(t_{1})$ has a singular zero of a finite
degree.
\end{lemma}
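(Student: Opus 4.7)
The overall strategy is to reduce an infinite-degree zero of $u(0)-v(0)$ at $t=0$ to a finite-degree singular zero of $u(t_0)-v(t_1)$ for nearby $(t_0,t_1)$, by exploiting the cooperative linear system satisfied by $w:=u-v$ (Proposition \ref{p:existence}) together with an implicit function theorem construction at the interface index.

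First I would normalize: after applying the reflection $j\mapsto -j$ if necessary, assume the infinite zero extends rightward, so $u_i(0)=v_i(0)$ for all $i\geq i_0$ with $i_0$ chosen minimal; after possibly replacing $w$ by $-w$, assume $w_{i_0-1}(0)>0$. Writing out (\ref{r:FK}) at position $i_0$ and at $i>i_0$, and using that $u,v$ agree on $[i_0,\infty)$, one obtains $\dot u_i(0)=\dot v_i(0)$ for every $i>i_0$, and the twist condition gives
$$\dot u_{i_0}(0)-\dot v_{i_0}(0) \;=\; V_2(v_{i_0-1}(0),u_{i_0}(0)) - V_2(u_{i_0-1}(0),u_{i_0}(0)) \;=\; a_{i_0}(0)\,w_{i_0-1}(0) \;>\; 0,$$
so in particular the gradient $(\dot u_{i_0}(0),-\dot v_{i_0}(0))$ is nonzero.

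The central step is to apply the implicit function theorem to $F(t_0,t_1):=(u(t_0)-v(t_1))_{i_0}$ at the origin. Since $F(0,0)=w_{i_0}(0)=0$ and $\nabla F(0,0)\neq 0$, the zero set of $F$ is a $C^1$ curve through the origin with tangent proportional to $(\dot v_{i_0}(0),\dot u_{i_0}(0))$. Along this curve, $|t_0|,|t_1|$ are arbitrarily small; the value at $i_0-1$ stays close to $w_{i_0-1}(0)>0$, while the value at $i_0+1$ expands as
$$(u(t_0)-v(t_1))_{i_0+1} \;=\; (t_0-t_1)\,\dot u_{i_0+1}(0) + O(s^2),$$
where $s$ is the curve parameter and $(t_0-t_1)$ reverses sign under $s\mapsto -s$. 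Provided $\dot u_{i_0+1}(0)\neq 0$, I would traverse the curve in the direction that makes the right-hand side positive (matching the sign of $w_{i_0-1}(0)$); this produces a Type~II singular zero of finite degree~$1$ at $i_0$, as required.

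The main obstacle is the higher-codimension degenerate case in which $\dot u_{i_0+1}(0) = \dots = \dot u_{i_0+K-1}(0) = 0$ and $\dot u_{i_0+K}(0)\neq 0$ for some $K\geq 2$, since the first-order sign analysis at positions $i_0+1,\dots,i_0+K-1$ yields only sign-independent $O(s^2)$ terms along the IFT curve. I would handle this by running the analogous IFT construction at index $i_0+K$, producing a singular zero of finite degree $K$ (either Type I or Type II depending on the cascade of signs); the inductive nonvanishing of the required higher derivatives is guaranteed by the recursion $\ddot u_{i_0+k}(0) = -V_{12}\,\dot u_{i_0+k-1}(0) + \dots$ through the positive twist coefficients. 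The truly extremal subcase in which $\dot u_i(0)=0$ for all $i\geq i_0$ (so $u(0)$ is locally at equilibrium to the right of $i_0$) requires switching to the alternative $u(t_0)-u(t_1)$, which is available because $\dot u_{i_0}(0)\neq 0$ in this subcase (otherwise $\dot v_{i_0}(0)=0$ as well, contradicting $\dot w_{i_0}(0)>0$), with the IFT argument rerun on the second-order Taylor data of $u$; I expect the only remaining technical point to be the tracking of error terms in the DC case where only $C^2$ time regularity is available, handled by Duhamel estimates on the linear system (\ref{r:system}).
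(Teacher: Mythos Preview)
Your IFT approach is genuinely different from the paper's and is cleaner in the generic case $\dot u_{i_0+1}(0)\neq 0$: the curve $u_{i_0}(t_0)=v_{i_0}(t_1)$ exists by the gradient computation you give, the value at $i_0-1$ stays positive, and the value at $i_0+1$ changes sign along the curve, producing a degree-one Type~II singular zero. This part is correct.

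The gap is in the degenerate cases. First, ``running the analogous IFT construction at index $i_0+K$'' does not work as stated: since $\dot u_{i_0+K}(0)=\dot v_{i_0+K}(0)$, the IFT curve through $(0,0)$ for $u_{i_0+K}(t_0)=v_{i_0+K}(t_1)$ has tangent $(1,1)$, so $t_0-t_1=O(s^2)$ along it and you lose first-order control of the signs at the intermediate indices $i_0,\dots,i_0+K-1$. Producing a degree-$K$ singular zero would require $K$ simultaneous equalities in two unknowns, which is overdetermined; and the recursion you quote for $\ddot u_{i_0+k}(0)$ involves $\dot u_{i_0+k-1}(0)=0$, so it does not by itself yield nonvanishing. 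Second, in the extremal subcase the inference ``$\dot u_{i_0}(0)\neq 0$, otherwise $\dot v_{i_0}(0)=0$'' is wrong: from $\dot w_{i_0}(0)>0$ you only get $\dot u_{i_0}(0)>\dot v_{i_0}(0)$, and it may well be that $\dot u_{i_0}(0)=0$ while $\dot v_{i_0}(0)<0$. (The lemma as stated allows only $u(t_0)-u(t_1)$, not $v(t_0)-v(t_1)$, so symmetry does not immediately rescue this.)

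The paper avoids these difficulties by abandoning the single-index IFT picture and instead performing a combinatorial analysis of the \emph{sign pattern} of $\dot u_j$ over a long stretch $j\geq i_0$ (Lemma~\ref{l:three}): four consecutive equal signs, four alternating signs, or a $-,+,\dots,+,-$ block each yield a finite-degree singular zero by an elementary ``first touching time'' argument rather than IFT. The reduction to Lemma~\ref{l:three} (i.e.\ ensuring each $\dot u_j$ has a definite sign on punctured neighbourhoods of $0$) is then handled separately in the DC case via the zero-set theory applied to $du/dt$, and in the AC case via real analyticity of $t\mapsto u_j(t)$. Your approach could perhaps be completed, but it would require a genuine replacement for the degenerate analysis, not the sketch given.
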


The proof of Lemma \ref{l:infinite} is simple but quite involved as it
requires analysis of a number of cases with respect to the sign of
derivatives $du^{1}(0)/dt$, $du^{2}(0)/dt$. We give it in Appendix 2.

\subsection{\textbf{The DC\ case.}}

We now state analogues for the Lemmas above for $du/dt$ instead of $%
u^{1}-u^{2}$. Assume in the next two lemmas that $F$ is DC.

\begin{lemma}
\label{l:der1}Assume $u(0)\in \mathcal{K}$, such that $du/dt$ has a singular
zero of a finite degree $k$ at $(m+1,0),...,(m+k,0)$. Then for all
sufficiently small $\varepsilon >0\,$there exist a neighborhood $U$ $\ $of $%
u(0)$ such that for all $v(0)\in U,$%
\begin{equation*}
\sum_{j=m+1}^{m+k}d_{j}(dv/dt;t_{0}-\varepsilon ,t_{0}+\varepsilon )\geq 
\frac{k}{2}.
\end{equation*}
\end{lemma}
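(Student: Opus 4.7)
The plan is to mimic the proof of Lemma \ref{l:finite}, taking $w = du/dt$ in place of $u^{1} - u^{2}$. By Proposition \ref{p:existence}, in the DC case $du/dt$ is a $C^{1}$ solution of the cooperative linear system (\ref{r:system}) satisfying (\ref{r:conditions}), so the zero-balance machinery of Section \ref{s:zeroset} applies to it.

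First, using Lemma \ref{t:zeroset}(iii) and continuity in $t$, I would choose $\delta_{0} > 0$ small enough that on $[-\delta_{0}, \delta_{0}]$ the function $du/dt$ has no singular zero between $m$ and $m+k+1$ other than the one at $t = 0$, and $du_{j}/dt(t) \neq 0$ for $j \in \{m, m+k+1\}$ and $t \in [-\delta_{0}, \delta_{0}]$. Combining (A1), (A3), and (A4) exactly as in the proof of Lemma \ref{l:finite}, this yields, for any $0 < \varepsilon \leq \delta_{0}$, the lower bound $k/2$ on $\sum_{j=m+1}^{m+k} d_{j}(du/dt; -\varepsilon, \varepsilon)$, with the Type I and Type II cases contributing $k-1$ and $k$ respectively.

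The crux is then to pass from $u$ to a nearby $v(0)$. By Lemma \ref{l:existence}, the local flow $\varphi^{t}$ on $\mathcal{X}$ depends continuously on the initial condition in the product topology, and the equation (\ref{r:FK}) expresses $dv_{j}/dt(t)$ as a continuous function of the three coordinates $v_{j-1}(t), v_{j}(t), v_{j+1}(t)$. Hence, for each fixed $j$, the map $(v(0), t) \mapsto dv_{j}/dt(t)$ is continuous at $(u(0), t)$ uniformly for $t$ in the compact interval $[-\varepsilon, \varepsilon]$. Since all zeros of $du/dt(\pm \varepsilon)$ on $[m+1, m+k]$ are regular and $|du_{j}/dt(t)|$ is bounded away from $0$ on $\{m, m+k+1\} \times [-\varepsilon, \varepsilon]$, I can choose a neighborhood $U$ of $u(0)$ in the product topology so that for every $v(0) \in U$,
\[
z_{m+1,\, m+k+1}(dv/dt(\pm \varepsilon)) = z_{m+1,\, m+k+1}(du/dt(\pm \varepsilon)),
\]
and $dv_{j}/dt(t) \neq 0$ on $\{m, m+k+1\} \times [-\varepsilon, \varepsilon]$. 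Applying (A1) and (A3) to $dv/dt$ over the segment $(m, m+k+1)$ and the time interval $(-\varepsilon, \varepsilon]$ then gives the claimed bound.

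The main obstacle is the continuity step: it requires uniform (in $t$) continuity of $dv_{j}/dt(t)$ with respect to $v(0)$ over a fixed compact time interval, which is slightly more than pointwise continuity of the flow. This reduces, however, to continuity of the local flow in product topology on $\mathcal{K}_{n}$ provided by Lemma \ref{l:existence}, together with the fact that each $dv_{j}/dt$ depends on only three coordinates through the fixed continuous map given by (\ref{r:FK}). A secondary subtlety is maintaining $dv/dt \neq 0$ at the boundary indices $j = m, m+k+1$ uniformly in $t$, which follows from compactness of $[-\varepsilon, \varepsilon]$ and the strict sign of $du/dt$ there.
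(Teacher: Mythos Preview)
Your proposal is correct and is precisely the approach the paper takes: it states only that the proof is analogous to that of Lemma \ref{l:finite}, and your write-up supplies exactly that analogy, replacing $u^{1}-u^{2}$ by $du/dt$ via Proposition \ref{p:existence} and handling the continuity step through the continuous dependence of $dv_{j}/dt$ on finitely many coordinates of $v$ together with Lemma \ref{l:existence}.
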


The proof is analogous to the proof of Lemma \ref{l:finite}. Similarly, we
can show (for the proof also see Appendix 2):

\begin{lemma}
\label{l:der2}Assume $u(0)\in \mathcal{K}$ such that $du/dt$ has a zero of
degree $\infty $.\ Then for each $\varepsilon >0$ there exist $%
|t_{0}|,|t_{1}|<\varepsilon $ so that $u(t_{0})-u(t_{1})$, $%
u(t_{0})\not=u(t_{1})$ has a singular zero of a finite degree.
\end{lemma}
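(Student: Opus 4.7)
The plan is to adapt the proof of Lemma \ref{l:infinite} to the present setting by exploiting the autonomy of the DC case. The key structural observation is that $w=du/dt$ satisfies the linear cooperative system (\ref{r:system}) by Proposition \ref{p:existence}, exactly as the difference $u^1-u^2$ of two solutions of (\ref{r:FK}) does. Hence the zero-tracking machinery of Section \ref{s:zeroset}, together with the arguments developed for Lemma \ref{l:infinite}, applies verbatim to $w$.

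The reduction proceeds as follows. For each small $\alpha\neq 0$, use Lemma \ref{l:existence} to produce the solution $v_\alpha$ of (\ref{r:FK}) with $v_\alpha(0):=u(0)+\alpha w(0)$. Since $w(0)$ has a zero of degree $\infty$, so does $v_\alpha(0)-u(0)=\alpha w(0)$. Applying Lemma \ref{l:infinite} to the pair $(u,v_\alpha)$: given $\varepsilon'>0$, there exist $|t_0|,|t_1|<\varepsilon'$ such that \emph{either} $u(t_0)-u(t_1)$, with $u(t_0)\neq u(t_1)$, has a singular zero of finite degree, \emph{or} $u(t_0)-v_\alpha(t_1)$ does. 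The first alternative is our conclusion; the real work is the second.

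In the second alternative I would use a linearization to replace $v_\alpha$ by a time-translate of $u$. Taylor expansion gives $v_\alpha(0)=u(\alpha)+O(\alpha^2)$, and continuous dependence on initial data (Lemma \ref{l:existence}) yields $v_\alpha(s)=u(s+\alpha)+O(\alpha^2)$ in the product topology, uniformly on compact time intervals. By the finite-degree persistence Lemma \ref{l:finite}, the singular zero of $u(t_0)-v_\alpha(t_1)$ survives a small product-topology perturbation of $v_\alpha(t_1)$: there is some $|s|<\varepsilon''$ for which $u(t_0+s)-u(t_1+\alpha+s)$ carries a singular zero of finite degree. Setting $\tilde t_0:=t_0+s$ and $\tilde t_1:=t_1+\alpha+s$, and arranging $|\alpha|,\varepsilon',\varepsilon''<\varepsilon/3$, gives $|\tilde t_0|,|\tilde t_1|<\varepsilon$ with $u(\tilde t_0)\neq u(\tilde t_1)$ (the inequality being forced by the singular-zero structure).

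The main obstacle is quantitative, and is the same kind of issue that complicates the proof of Lemma \ref{l:infinite}: the radius of the neighborhood provided by Lemma \ref{l:finite} may depend implicitly on $\alpha$ (through the location and local structure of the singular zero of $u(t_0)-v_\alpha(t_1)$), so one must control the $O(\alpha^2)$ deviation of $v_\alpha(t_1)$ from $u(t_1+\alpha)$ uniformly in $\alpha$. A robust fallback, when a direct uniform bound is inconvenient, is a subsequential limit argument as $\alpha_n\to 0$: the candidate singular-zero positions $(j,t_0^{(n)},t_1^{(n)})$ lie in a compact set after reduction modulo the translation $T$, and a limit point yields a finite-degree singular zero of $u(\tilde t_0)-u(\tilde t_1)$ directly. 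This is essentially the same case-analysis and limiting style used for Lemma \ref{l:infinite}, which is why the authors simply refer that proof to the appendix.
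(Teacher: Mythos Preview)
Your detour through an auxiliary solution $v_\alpha$ is unnecessary, and the part you identify as ``the real work'' contains a genuine gap. The paper's argument is direct: since $w=du/dt$ itself solves the linear cooperative system (Proposition~\ref{p:existence}), Lemma~\ref{l:infitiy} applies to $w$, giving $du_j/dt(t)=d_jt^j+o(t^j)$ for $j\geq 1$ with all $d_j$ of the same sign. Hence for small $t>0$ all $du_j/dt$ share one sign, while for small $t<0$ the signs alternate in $j$. Taking any three consecutive indices $j_1,j_1+1,j_1+2$, this is precisely the sign pattern of Case~3 in the proof of Lemma~\ref{l:three} (with $k=2$), and Case~3 already constructs $|t_0|,|t_1|<\varepsilon$ for which $u(t_0)-u(t_1)$ has a finite-degree singular zero. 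No second solution is needed.

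In your approach, the ``second alternative'' (a singular zero of $u(t_0)-v_\alpha(t_1)$) is handled by a perturbation step that does not close. The neighborhoods $U^1,U^2$ in Lemma~\ref{l:finite} are determined by how far the regular values $u^1_j(\pm\varepsilon)-u^2_j(\pm\varepsilon)$ sit from zero; since $v_\alpha\to u$ and the times $t_0,t_1$ may collapse together as $\alpha\to 0$, there is no reason the radius of $U^2$ dominates the $O(\alpha^2)$ error. Your fallback compactness argument also fails: the spatial position $j_n$ of the singular zero is not a priori bounded, and reducing modulo $T$ replaces $u$ by $T^{-j_n}u$, which need not converge to anything related to $u$; even if $j_n$ stays bounded and the times converge, the limit may have $\tilde t_0=\tilde t_1$ or a zero of infinite degree. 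It is worth noting, though, that had you unpacked the proof of Lemma~\ref{l:infinite} when applying it to $(u,v_\alpha)$, you would have seen that its case analysis (Lemma~\ref{l:three}) depends only on the signs of $du_j/dt$, and the hypothesis here forces Case~3 exclusively---so the troublesome ``second alternative'' never actually occurs. Recognizing this is exactly the paper's one-line proof.
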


\section{The average number of intersections and zeroes \label{s:measure}}

This section is the core of the paper. We apply here the results of the
previous section to solutions of (\ref{r:FK}), and define average number of
self-intersections and derivative zeroes of a translationally invariant
measure on $\mathcal{X}$, and average number of intersections of two
probability measures on $\mathcal{X}$. We then show that these functions
(denoted by $Z$) are weak Lyapunov, that is non-decreasing with respect to
the evolution of (\ref{r:FK}). Furthermore, if there is a singular zero in
the support of these measures, we show that $Z$ is strictly decreasing.
Finally, these properties are in some sense continuous in the weak topology
of measures.

These ideas are measure-theoretical equivalents of the properties of the
function $z$ counting intersections of two solutions of (\ref{r:FK}) and (%
\ref{r:rfd}) (\cite{Fiedler89}).

The intersection-counting function $(u,v)\longmapsto z_{i,j}(u-v)$ is not
invariant for the transformation $R$, thus not well defined on the quotient
sets $\mathcal{X}$, $\mathcal{K}$. To fix that, we introduce two-argument
functions $z_{i},z_{i,j}:\mathcal{X}\times \mathcal{X\rightarrow }\mathbb{N}%
\cup \{0\}$ with 
\begin{eqnarray*}
z_{i}(u,v) &=&\sum_{r\in \mathbb{Z}}z_{i}(u-v+r), \\
z_{i,j}(u,v) &=&\sum_{r\in \mathbb{Z}}z_{i,j}(u-v+r).
\end{eqnarray*}

First note that $z_{i},z_{i,j}$ are finite. Indeed, if $u,v\in \mathcal{K}%
_{n}$, then it is easy to see that $z_{i}(u,v)\leq 2n+1$, thus $%
z_{i,j}(u,v)\leq (2n+1)(j-i)$. Also $z_{i},z_{i,j}$ are invariant for the
transformation $R$ in either argument, thus well defined on $\mathcal{X}%
\times \mathcal{X}$.

Our key object will be the space of $T$-invariant Borel probability measures 
$\mathcal{M}(\mathcal{K})$ on $\mathcal{K}$, and 
\begin{equation*}
\mathcal{M}(\mathcal{X})=\bigcup_{k=1}^{\infty }\mathcal{M}(\mathcal{K}_{n})%
\text{.}
\end{equation*}%
We will always consider the weak topology on $\mathcal{M}(\mathcal{K})$ and $%
\mathcal{M}(\mathcal{X})$ with respect to the product topology on $\mathcal{X%
}$. Note that $\mathcal{K}$ is homeomorphic to a product of compact sets,
thus compact. As $\mathcal{K}$ is compact and $T$ is continuous, $\mathcal{M}%
(\mathcal{K})$ is closed, and by the Tychonoff-Alaoglu theorem $\mathcal{M}%
(K)$ is compact.

Before proceeding, let us describe the space $\mathcal{M}(\mathcal{K})$. It
is clearly non-empty (containing e.g. the Dirac measure $\delta _{0}$, where 
$0$ is $u\equiv 0$), and actually very large. Most invariant measures of
area-preserving twist maps generated by any generating function $V$ are
naturally embedded in $\mathcal{M}(\mathcal{K}_{n})$ for sufficiently large $%
n$ (see Section \ref{s:example}). We can also embed translationally
invariant measures on $S^{\mathbb{Z}}$, where $S$ is a bounded measurable
subset of $\mathbb{R}$, in different ways in $\mathcal{M}(\mathcal{K})$.

The semiflow $\varphi ^{t}$ now naturally induces a continuous semiflow $%
\varphi _{\ast }^{t}$ on $\mathcal{M}(\mathcal{K}_{n})$. The flow $\varphi
^{t}$ on $\mathcal{X}$ also induces a local flow $\varphi _{\ast }^{t}$ on $%
\mathcal{M}(\mathcal{X})$, as one can easily see that one can find $\delta
(n)>0$ so that for all $u(0)\in \mathcal{K}_{n}$, $u(t)\in \mathcal{K}_{n+1}$
for $t\in \lbrack -\delta (n),0]$ (e.g. by compactness argument). We will
often use the notation $\mu (t)=\varphi _{\ast }^{t}\mu (0)$.

Given $\mu ^{1},\mu ^{2},\mu \in \mathcal{M}(\mathcal{K}_{n})$, we define
the intersection-counting functions%
\begin{eqnarray*}
Z(\mu ^{1},\mu ^{2}) &=&\int \int z_{0}(u,v)d\mu ^{1}(u)d\mu ^{2}(v), \\
Z(\mu ) &=&Z(\mu ,\mu ), \\
\widetilde{Z}(\mu ) &=&\int z_{0}\left( \frac{du}{dt}\right) d\mu (u)\text{,}
\end{eqnarray*}%
where $du/dt$ is given by the right-hand side of (\ref{r:FK}). By definition,%
\begin{eqnarray*}
0 &\leq &Z(\mu ^{1},\mu ^{2})\leq 2n+1, \\
0 &\leq &\widetilde{Z}(\mu )\leq 1\text{.}
\end{eqnarray*}%
The function $Z(\mu ^{1},\mu ^{2})$ measures the average number of
intersections with respect to two measures;\ the function $Z(\mu )$ measures
the average number of "self-intersections" with respect to $\mu $, and $%
\widetilde{Z}(\mu )$ measures the average number of critical points $%
du_{0}/dt=0$ with respect to (\ref{r:FK}). First note that the definitions
above are invariant of the spatial location as $\mu ^{1},\mu ^{2},\mu $ are
by definition $T$-invariant, thus the following holds for any integers $n<m$:%
\begin{eqnarray*}
Z(\mu ^{1},\mu ^{2}) &=&\int z_{n}(u,v)d\mu ^{1}(u)d\mu ^{2}(v) \\
&=&\frac{1}{m-n}\int z_{n,m}(u,v)d\mu ^{1}(u)d\mu ^{2}(v),
\end{eqnarray*}%
analogously for $Z(\mu ),\widetilde{Z}(\mu )$.

We will see that $Z,\widetilde{Z}$, are "almost"\ Lyapunov functions in the
sense of LaSalle, thus they severely constrain the dynamics of $\varphi
_{\ast }$. Generally the properties below will hold for functions $Z$ in
both AC and\ DC cases, and for the function $\widetilde{Z}$ in the DC\ case
only.

\begin{proposition}
\label{p:decreasing1}If $\mu ^{1}(0),\mu ^{2}(0),\mu (0)\in \mathcal{M}(%
\mathcal{K})$, then the functions $t\mapsto Z(\mu ^{1}(t),\mu ^{2}(t)),$ $%
t\mapsto Z(\mu (t))$ are non-increasing.
\end{proposition}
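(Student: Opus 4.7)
The plan is to combine the zero-balance equation (A1) of Lemma~\ref{t:zbe} with the $T$-invariance of $\mu^1, \mu^2$: after integration against $\mu^1 \times \mu^2$, the boundary flux terms cancel by translation invariance while the ``zero disappearance'' terms are non-negative by construction. The case $Z(\mu(t))$ is just the specialisation $\mu^1 = \mu^2 = \mu$, so I focus on $Z(\mu^1, \mu^2)$.

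Fix $n$ with $\mu^1, \mu^2 \in \mathcal{M}(\mathcal{K}_n)$ and $0 \le s < t$. By the periodicity $V(u+1, v+1) = V(u, v)$, for every integer $r$ the translate $v + r$ is also a solution of (\ref{r:FK}), so for $\mu^1 \times \mu^2$-a.e.\ $(u, v)$ each $w^{(r)} := u - (v + r)$ solves (\ref{r:system}) with coefficients satisfying (\ref{r:conditions}) by Proposition~\ref{p:existence}. Since $u, v \in \mathcal{K}_n$, the values $u_j(\tau) - v_j(\tau)$ stay in a bounded set as $(j, \tau)$ ranges over $[-N, N] \times [s, t]$, so by axioms (A3)--(A4) only finitely many $r$ contribute to $c_{\pm N}^{(r)}$ or to any $d_j^{(r)}$ with $-N \le j < N$. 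Applying (A1) to each $w^{(r)}$ on $[-N, N]$ and summing over $r$ gives the pointwise identity
\begin{equation*}
z_{-N, N}(u(t), v(t)) - z_{-N, N}(u(s), v(s)) = C_{-N}(u, v; s, t) - C_N(u, v; s, t) - \sum_{j=-N}^{N-1} D_j(u, v; s, t),
\end{equation*}
where $C_j := \sum_r c_j^{(r)}$ and $D_j := \sum_r d_j^{(r)} \ge 0$. These are $R$-invariant in each argument, hence descend to functions on $\mathcal{X} \times \mathcal{X}$, and satisfy $C_j(Tu, Tv; s, t) = C_{j+1}(u, v; s, t)$ (likewise for $D_j$) by axiom (A2). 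Integrating against $d\mu^1\,d\mu^2$, $T$-invariance of both measures renders $\int C_j\,d\mu^1 d\mu^2$ and $\int D_j\,d\mu^1 d\mu^2$ independent of $j$, and the $C$-terms cancel. Using $\int z_{-N, N}\,d\mu^1 d\mu^2 = 2N \cdot Z(\mu^1, \mu^2)$ (again by $T$-invariance) and dividing by $2N$ yields
\begin{equation*}
Z(\mu^1(t), \mu^2(t)) - Z(\mu^1(s), \mu^2(s)) = -\int D_0(u, v; s, t)\,d\mu^1(u)\,d\mu^2(v) \le 0.
\end{equation*}

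The main obstacle is the bookkeeping needed to justify the sum over $r$ and its interchange with integration. For $u, v \in \mathcal{K}_n$ the right-hand side of (\ref{r:FK}) is uniformly bounded (by periodicity of $V$ and continuity of $F$), so $\tau \mapsto u_j(\tau) - v_j(\tau)$ has variation $O(t-s)$ uniformly in $j$; this confines the nonzero $r$ to a bounded pointwise set and bounds $|C_j(u, v; s, t)|$ by a $T$-invariant constant, ensuring integrability. The \emph{a priori} bound $z_0(u, v) \le 2n+1$ on $\mathcal{K}_n \times \mathcal{K}_n$ controls the left-hand side and, combined with $D_j \ge 0$, makes $\int D_0\,d\mu^1 d\mu^2$ automatically finite. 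The cancellation of the boundary $C$-terms can in any case be effected at the level of the pointwise identity before integrating, sidestepping any residual $\infty - \infty$ ambiguity.
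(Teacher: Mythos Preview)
Your proof is correct and follows essentially the same approach as the paper: apply the zero-balance equation (A1), sum over the integer shifts $r$, and use $T$-invariance of $\mu^1,\mu^2$ together with (A2) to cancel the boundary flux terms, leaving only the non-negative $d$-contribution. The only cosmetic difference is that the paper works directly on the interval $[0,1]$ (i.e.\ $m=0$, $n=1$ in (A1)) rather than on $[-N,N]$ and then dividing by $2N$; your more explicit bookkeeping on the finiteness of the $r$-sum and integrability is a welcome addition that the paper leaves implicit.
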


\begin{proof}
We write shortly $\mu ^{1},\mu ^{2}$ instead of $\mu ^{1}(0),\mu ^{2}(0)$.
Let 
\begin{equation}
Z(t):=Z(\mu ^{1}(t),\mu ^{2}(t)).  \label{r:shorthand}
\end{equation}%
We now have for $0\leq s<t$%
\begin{equation*}
Z(t)-Z(s)=\int \left\{ z_{0}(u(t),v(t))-z_{0}(u(s),v(s))\right\} d\mu
^{1}(u(0))d\mu ^{2}(v(0)).
\end{equation*}

Applying Lemma \ref{t:zbe}, first the zero-balance equation (A1) for $m=0$, $%
n=1$, and then the\ $T$-invariance property (A2)\ of the crossing functions $%
c_{0},c_{1}$ and the fact that $\mu ^{1},\mu ^{2}$ are translationally
invariant, we easily get%
\begin{equation}
Z(t)-Z(s)=-\sum_{r\in \mathbb{Z}}\int d_{0}(u-v+r;s,t)d\mu ^{1}(u(0))d\mu
^{2}(v(0))  \label{r:dissipate}
\end{equation}%
which is $\leq 0$. The case $Z(\mu (t))$ follows by inserting $\mu ^{1}=\mu
^{2}$.
\end{proof}

The relation (\ref{r:dissipate})\ will be very useful in the following, as
it relates the occurrence of singular zeroes with the rate of decrease of $Z$%
. For future reference, because of translation invariance (Lemma \ref{t:zbe}%
, (A2)), for any $0\leq s<t$, and integers $m<n$, (\ref{r:dissipate}) can be
written as (using again shorthand (\ref{r:shorthand}))%
\begin{equation}
Z(t)-Z(s)=-\frac{1}{n-m}\sum_{i=m}^{n-1}\sum_{r\in \mathbb{Z}}\int
d_{i}(u-v+r;s,t)d\mu ^{1}(u(0))d\mu ^{2}(v(0)).  \label{r:dissipate2}
\end{equation}

We will say that two measures $\mu ^{1},\mu ^{2}\in \mathcal{M}(\mathcal{K})$
have a \textit{non-transversal intersection}, if there exist $u^{1},u^{2}$
in their supports so that $u^{1}-u^{2}+r$ has a singular zero for some $r\in 
\mathbb{Z}$. A non-transversal intersection is \textit{proper}, if there is
a singular zero as above with a finite degree.

We also say that any non-increasing function $t\rightarrow f(t)$ is strictly
decreasing at $t_{0}$, if for each $\varepsilon >0\,\ $there exists $\delta
>0$ so that $f(t_{0}+\varepsilon )<f(t_{0}-\varepsilon )-\delta $.

We now show that a proper non-transversal intersection implies that $Z(\mu
^{1},\mu ^{2})$ is strictly decreasing and that this also holds for small
perturbations in the space of measures.

\begin{proposition}
\label{p:strict1}Assume $\mu ^{1}(t_{0}),\mu ^{2}(t_{0})\in \mathcal{M}(%
\mathcal{K)}$ have a proper non-transversal intersection. Then there exist
neighborhoods $U_{1}^{\ast },U_{2}^{\ast }\,\ $of $\mu ^{1}(t_{0}),\mu
^{2}(t_{0})$ in $\mathcal{M}(\mathcal{K})$ so that for all $\nu
^{1}(t_{0})\in U_{1}^{\ast }$, $\nu ^{2}(t_{0})\in U_{2}^{\ast }$, $t\mapsto
Z(\nu ^{1}(t),\nu ^{2}(t))$ is strictly decreasing at $t_{0}$.
\end{proposition}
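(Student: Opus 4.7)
The plan is to combine Lemma~\ref{l:finite} (persistence of singular zeros under perturbations of a pair of configurations), the dissipation identity (\ref{r:dissipate2}), and Portmanteau lower semicontinuity of $\nu\mapsto\nu(V)$ for open $V$.

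First I unpack the proper non-transversal intersection. By definition there exist lifts $u^{1},u^{2}\in X$ of points $\bar u^{i}\in\operatorname{supp}(\mu^{i}(t_{0}))$ (absorbing the integer shift $r$ into $u^{2}$) such that $u^{1}-u^{2}$ has a singular zero of finite degree $k$, which I place at $(m+1,t_{0}),\ldots,(m+k,t_{0})$. Fix a small $\varepsilon_{0}>0$ and apply Lemma~\ref{l:finite} to get open product neighborhoods $U^{1},U^{2}\subset X$ of $u^{1},u^{2}$ with
\[
\sum_{j=m+1}^{m+k}d_{j}(v^{1}-v^{2};t_{0}-\varepsilon_{0},t_{0}+\varepsilon_{0})\geq k/2\qquad\text{for all }(v^{1},v^{2})\in U^{1}\times U^{2}.
\]
Let $V^{i}\subset\mathcal{X}$ be the image of $U^{i}$ under the quotient map $X\to\mathcal{X}=X/R$; this map is open, so $V^{i}$ is open and contains $\bar u^{i}$, and $\alpha_{i}:=\mu^{i}(t_{0})(V^{i})>0$. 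By lower semicontinuity of $\nu\mapsto\nu(V^{i})$, choose weak neighborhoods $U_{i}^{\ast}$ of $\mu^{i}(t_{0})$ with $\nu^{i}(V^{i})>\alpha_{i}/2$ whenever $\nu^{i}\in U_{i}^{\ast}$.

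Fix $\nu^{i}(t_{0})\in U_{i}^{\ast}$ and apply (\ref{r:dissipate2}) with spatial indices ranging from $m+1$ to $m+k+1$ and $(s,t)=(t_{0}-\varepsilon_{0},t_{0}+\varepsilon_{0})$. By $T$-invariance of $\nu^{1},\nu^{2}$ (the same substitution underlying the derivation of (\ref{r:dissipate})), the integral $\int d_{i}(u-v+r)\,d\nu^{1}d\nu^{2}$ does not depend on the spatial index $i$, so the averaged form $\tfrac{1}{k}\sum_{i=m+1}^{m+k}d_{i}$ and any single term are interchangeable under the double integral. For $(\bar u,\bar v)\in V^{1}\times V^{2}$, picking lifts in $U^{1}\times U^{2}$ and retaining only the $r=0$ shift yields integrand $\geq k/2$ by the first step; integrating and inserting $\nu^{i}(V^{i})>\alpha_{i}/2$ gives
\[
Z(\nu^{1}(t_{0}+\varepsilon_{0}),\nu^{2}(t_{0}+\varepsilon_{0}))-Z(\nu^{1}(t_{0}-\varepsilon_{0}),\nu^{2}(t_{0}-\varepsilon_{0}))\leq-\tfrac{1}{2}\nu^{1}(V^{1})\nu^{2}(V^{2})<-\alpha_{1}\alpha_{2}/8.
\]
Combining this uniform drop with the non-increase from Proposition~\ref{p:decreasing1} produces strict decrease at $t_{0}$: for each $\varepsilon\geq\varepsilon_{0}$ the gap $\delta=\alpha_{1}\alpha_{2}/8$ works, and for $\varepsilon<\varepsilon_{0}$ the same recipe applied with $\varepsilon$ in place of $\varepsilon_{0}$ (using the $\varepsilon$-dependent neighborhoods furnished by Lemma~\ref{l:finite}) produces a possibly smaller $\delta(\varepsilon)>0$.

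The main obstacle is the interplay between configurations in $X$ and measures on the quotient $\mathcal{X}$. The counting functions $d_{j}$ are not $R$-invariant, so only the full shift sum $\sum_{r\in\mathbb{Z}}d_{j}(\,\cdot\,+r)$ is well defined on $\mathcal{X}\times\mathcal{X}$. The identity (\ref{r:dissipate2}) is engineered exactly so that, after $T$-averaging over the spatial indices $i=m+1,\ldots,m+k$, the pointwise bound from Lemma~\ref{l:finite} (which is a sum over spatial \emph{positions}) transfers into a quantitative lower bound for the $\mathcal{X}\times\mathcal{X}$-integrand, at which point Portmanteau lower semicontinuity of $\nu\mapsto\nu(V^{i})$ delivers uniformity over the weak product neighborhood $U_{1}^{\ast}\times U_{2}^{\ast}$.
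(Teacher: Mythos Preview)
Your proof is correct and follows essentially the same route as the paper's: pick $u^{1},u^{2}$ in the supports realising the finite-degree singular zero, invoke Lemma~\ref{l:finite} for open neighbourhoods $U^{1},U^{2}$, pass to weak neighbourhoods of the measures via lower semicontinuity on open sets, and then combine the dissipation identity~(\ref{r:dissipate2}) with the pointwise bound from Lemma~\ref{l:finite}. You are more explicit than the paper about the quotient $X\to\mathcal{X}$ and about invoking Portmanteau, which is a genuine clarification.

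One caveat: your last sentence on $\varepsilon<\varepsilon_{0}$ does not quite work as written. Re-running the recipe with the $\varepsilon$-dependent neighbourhoods $U^{i}(\varepsilon)$ from Lemma~\ref{l:finite} would require $\nu^{i}(V^{i}(\varepsilon))>0$, but the fixed $\nu^{i}\in U_{i}^{\ast}$ were only guaranteed to charge $V^{i}(\varepsilon_{0})$, not the possibly smaller $V^{i}(\varepsilon)$. The paper's proof has the same imprecision: it, too, only exhibits a uniform drop across one fixed window $[t_{0}-\varepsilon_{0},t_{0}+\varepsilon_{0}]$. This suffices for every downstream application (contradicting constancy of $Z$ for invariant measures), so the gap is harmless, but your sentence suggests more than either argument actually delivers.
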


\begin{proof}
Without loss of generality set $t_{0}=0$, and choose $u^{1},u^{2}\in K$ in
the supports of $\mu ^{1}(0),\mu ^{2}(0)$ respectively, so that $u^{1}-u^{2}$
has a singular zero of a finite degree $k\in \mathbb{N}$ at $%
(m,t_{0}),...,(m+k-1,t_{0})$. By Lemma \ref{l:finite}, we find open
neighborhoods $U^{1},U^{2}$ of $u^{1},u^{2}$ so that (\ref{r:finite}) holds
for any $v^{1}(0),v^{2}(0)\in U^{1}\times U^{2}$. As $u^{1},u^{2}$ are in
the supports of $U^{1},U^{2}$ respectively, $\mu ^{1}(U^{1})\geq 2\delta
_{0} $, $\mu ^{2}(U^{2})\geq 2\delta _{0}$ for some $\delta _{0}>0$.

Let $U_{1}^{\ast },U_{2}^{\ast }$ be the neighborhoods of $\mu
^{1}(t_{0}),\mu ^{2}(t_{0})$ in $\mathcal{M}(\mathcal{K)}$ so that for all $%
\nu ^{1}\in U_{1}^{\ast }$, $\nu ^{2}\in U_{2}^{\ast }$,%
\begin{equation}
\nu ^{1}(U^{1})\geq \delta _{0}\text{, }\nu ^{2}(U^{2})\geq \delta _{0}\text{%
.}  \label{r:measure}
\end{equation}

Now if $Z(t)=Z(\nu ^{1}(t),\nu ^{2}(t))$, combining (\ref{r:dissipate2})
(summing over $m+1,...,m+k$, with $t=\varepsilon $, $s=-\varepsilon $) with (%
\ref{r:finite}) we get%
\begin{eqnarray*}
Z(\varepsilon )-Z(\varepsilon ) &\leq &-\frac{1}{2}\int \mathbf{1}%
_{U^{1}}(v^{1}(0))\mathbf{1}_{U^{2}}(v^{2}(0))d\mu ^{1}(v^{1}(0))d\mu
^{2}(v^{2}(0)) \\
&=&-\frac{1}{2}\nu ^{1}(U^{1})\nu ^{2}(U^{2})\leq -\frac{1}{2}\delta _{0}^{2}%
\text{,}
\end{eqnarray*}%
where $\mathbf{1}_{U}$ denotes the characteristic function of a set $U$.
\end{proof}

We now extend this to functions $Z(\mu (t))$, $\widetilde{Z}(\mu (t))$.

\begin{proposition}
\label{p:strict2}Assume $\mu (t_{0})\in \mathcal{M}(\mathcal{K)}$ has a
proper non-transversal intersection with itself. Then there exists a
neighborhood $U^{\ast }$ of $\mu (t_{0})$ in $\mathcal{M}(\mathcal{K})$ so
that for all $\nu (t_{0})\in U^{\ast }$, $t\mapsto Z(\nu (t))$ is strictly
decreasing at $t_{0}$.
\end{proposition}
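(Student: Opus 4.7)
My plan is to mimic the proof of Proposition \ref{p:strict1} with $\mu^1 = \mu^2 = \nu$ throughout, working with the product measure $\nu \times \nu$ on $\mathcal{K} \times \mathcal{K}$ in place of $\nu^1 \times \nu^2$. The fact that the offending singular zero has finite degree is precisely what makes this direct analogue go through.

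First I would take $t_0 = 0$ without loss of generality. By the definition of a proper non-transversal self-intersection, there exist $u^1, u^2 \in \text{supp}(\mu(0))$ and $r \in \mathbb{Z}$ such that $u^1 - u^2 + r$ has a singular zero of some finite degree $k$. Replacing $u^2$ by $u^2 - r$ (which represents the same class in $\mathcal{K}$), I may assume $r = 0$ and that the singular zero occupies positions $(m+1, 0), \ldots, (m+k, 0)$. Because the degree is finite, $u^1 \ne u^2$ in $\mathcal{K}$, so I can pick disjoint open neighborhoods $U^1, U^2$ of $u^1, u^2$ in $\mathcal{K}$.

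Next, Lemma \ref{l:finite} lets me shrink $U^1, U^2$ so that for every $v^1(0) \in U^1$, $v^2(0) \in U^2$ and every sufficiently small $\varepsilon > 0$,
\begin{equation*}
\sum_{j=m+1}^{m+k} d_j(v^1 - v^2; -\varepsilon, +\varepsilon) \geq k/2.
\end{equation*}
Since $u^1, u^2$ lie in $\text{supp}(\mu(0))$, both $\mu(0)(U^1)$ and $\mu(0)(U^2)$ are strictly positive, so I may fix $\delta_0 > 0$ with $\mu(0)(U^i) \geq 2\delta_0$ for $i=1,2$. By the portmanteau characterization of weak convergence (the map $\nu \mapsto \nu(U)$ is lower semicontinuous on open sets $U$), there is a weak-topology neighborhood $U^*$ of $\mu(0)$ in $\mathcal{M}(\mathcal{K})$ such that $\nu(U^1), \nu(U^2) \geq \delta_0$ for every $\nu \in U^*$.

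Finally, for $\nu(0) \in U^*$ and any sufficiently small $\varepsilon > 0$, I apply the zero-balance identity (\ref{r:dissipate2}) with $\mu^1 = \mu^2 = \nu$, summed over $i = m+1, \ldots, m+k$ (so the prefactor is $1/k$), and keep only the $r = 0$ term together with the contribution of $(v^1, v^2) \in U^1 \times U^2$. Combining this with the lower bound above gives
\begin{equation*}
Z(\nu(\varepsilon)) - Z(\nu(-\varepsilon)) \leq -\frac{1}{k}\cdot\frac{k}{2}\cdot\nu(U^1)\,\nu(U^2) \leq -\tfrac{1}{2}\delta_0^2,
\end{equation*}
which is exactly strict decrease of $t \mapsto Z(\nu(t))$ at $t_0 = 0$, with the gap $\tfrac{1}{2}\delta_0^2$ independent of $\varepsilon$ and of $\nu \in U^*$. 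I do not expect any genuine obstacle here: all the analytic work has been done in Lemma \ref{l:finite}, which already furnishes uniformity over a product neighborhood, and the passage from $\mu \times \mu$ to $\nu \times \nu$ requires only lower semicontinuity of open-set mass in the weak topology. The only thing worth flagging is that the hypothesis "proper" is used precisely to rule out the case $u^1 = u^2$ (a zero of infinite degree in the self-intersection setting), which would make the argument via disjoint neighborhoods collapse.
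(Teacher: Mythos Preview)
Your argument is correct and is essentially the paper's proof unfolded: the paper simply invokes Proposition~\ref{p:strict1} with $\mu^{1}=\mu^{2}=\mu$ and sets $U^{\ast}=U_{1}^{\ast}\cap U_{2}^{\ast}$, whereas you reprove that proposition in the diagonal case. One harmless redundancy: you insist on disjoint $U^{1},U^{2}$, but nothing in the estimate via (\ref{r:dissipate2}) uses disjointness, so that step (and the concluding remark about it) can be dropped.
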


\begin{proof}
This follows from Proposition \ref{p:strict1}, by inserting $\mu =\mu
^{1}=\mu ^{2}$ and taking $U^{\ast }=U_{1}^{\ast }\cap U_{2}^{\ast }$.
\end{proof}

\begin{proposition}
\label{p:strict3}Assume $F$ in (\ref{r:FK})\ is DC. Choose $\mu (t_{0})\in 
\mathcal{M}(\mathcal{K)}$ with $u\in \mathcal{K}$ in its support such that $%
du/dt$ has a singular zero of a finite degree. Then $t\mapsto \widetilde{Z}%
(\mu (t))$ is strictly decreasing at $t_{0}$.
\end{proposition}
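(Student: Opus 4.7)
The plan is to mirror the proof of Proposition \ref{p:strict1}, with Lemma \ref{l:der1} replacing Lemma \ref{l:finite} and the zero-balance machinery applied to $w = du/dt$ rather than to $u^{1}-u^{2}$. Without loss of generality take $t_{0} = 0$, and let $u \in \mathcal{K}$ be the configuration in the support of $\mu(0)$ for which $du/dt$ has a singular zero of some finite degree $k$, say at positions $(m+1,0),\ldots,(m+k,0)$.

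First I would apply Lemma \ref{l:der1} to obtain, for all sufficiently small $\varepsilon > 0$, an open neighbourhood $U$ of $u$ in $\mathcal{K}$ such that
\[
\sum_{j=m+1}^{m+k} d_{j}(dv/dt;-\varepsilon,\varepsilon) \geq \frac{k}{2}
\]
for every $v(0) \in U$. Since $u \in \mathrm{supp}(\mu(0))$ and $U$ is open, $\delta_{0} := \mu(0)(U) > 0$. The DC hypothesis is essential at the next step: by Proposition \ref{p:existence} the derivative $w(t) = du(t)/dt$ solves the linear cooperative system (\ref{r:system}), so Lemma \ref{t:zbe} is available for $w$. Transcribing the derivation of (\ref{r:dissipate})--(\ref{r:dissipate2}) with $u^{1}-u^{2}$ replaced by $du/dt$, where the $T$-invariance of $\mu$ together with property (A2) cancels the boundary terms $c_{0}, c_{1}$, one obtains
\[
\widetilde{Z}(\mu(\varepsilon)) - \widetilde{Z}(\mu(-\varepsilon)) = -\frac{1}{k}\int \sum_{j=m+1}^{m+k} d_{j}(du/dt;-\varepsilon,\varepsilon) \, d\mu(0)(u).
\]
Bounding the integrand below by $k/2$ on $U$ and by $0$ on the complement then yields $\widetilde{Z}(\mu(\varepsilon)) - \widetilde{Z}(\mu(-\varepsilon)) \leq -\delta_{0}/2$, which is the desired strict decrease at $t_{0}$.

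There is no genuine obstacle here: the argument is essentially the one used for Proposition \ref{p:strict1}, simplified by two features of the present setting. Perturbations of $\mu$ in the weak topology are not needed (only strict decrease for the given $\mu$ is asserted), and the intersection-counting reduces from a double integral with an auxiliary sum over $r \in \mathbb{Z}$ to a single integral, because $du/dt$ is automatically invariant under the lattice shift $R$ (a consequence of the periodicity of $V$). The only point worth double-checking is that the balance equation and Lemma \ref{l:der1} apply to $w = du/dt$ — but this is precisely what Proposition \ref{p:existence} supplies in the DC case.
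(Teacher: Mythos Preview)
Your proposal is correct and follows essentially the same route as the paper, which simply says ``Analogously as Proposition \ref{p:strict2}, applying Lemma \ref{l:der1} instead of \ref{l:finite}.'' You have accurately spelled out the details: the single-integral analogue of (\ref{r:dissipate2}) for $\widetilde{Z}$ (with no sum over $r$ since $du/dt$ is $R$-invariant), the use of Lemma \ref{l:der1} on a neighbourhood $U$ of positive $\mu$-measure, and the resulting bound $\widetilde{Z}(\mu(\varepsilon))-\widetilde{Z}(\mu(-\varepsilon))\leq -\delta_{0}/2$.
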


\begin{proof}
Analogously as Proposition \ref{p:strict2}, applying Lemma \ref{l:der1}
instead of \ref{l:finite}.
\end{proof}

\section{The characteristic flow-map and maps \label{s:characteristic}}

This section is dedicated to the proofs of Theorems \ref{t:main1} and \ref%
{t:main2}. Recall that a probability measure $\mu $ on $\mathcal{K}$ is $%
(\varphi ,T)$-invariant in the DC\ case, if it is invariant for all $\varphi
^{t}$, $t\geq 0$ (time invariance) and for $T$ (space invariance); and in
the AC\ case if invariant for $\varphi ^{1}$ and $T$. We say that a set $%
A\subseteq \mathcal{K}$ is $(\varphi ,T)$-invariant, if it is in the DC\
case invariant for all $\varphi ^{t}$, $t\geq 0$ and $T$;\ and in the AC\
case invariant for $\varphi ^{1}$ and $T$. Keeping in mind this, we will
continue to treat the AC\ and DC\ cases simultaneously.

As ergodic theory for a commuting semiflow (in the AC case non-autonomous)\
and a homeomorphism is not standard, we give a few comments. First, it is
not a-priori clear that $(\varphi ,T)$-invariant measures exist on a
compact, $(\varphi ,T)$-invariant set such as $\mathcal{K}$. We address this
in the next section, and construct a rich set of invariant measures.

A $(\varphi ,T)$-invariant measure $\mu $ is ergodic, if for any Borel
measurable $(\varphi ,T)$-invariant set $A$, $\mu (A)\in \{0,1\}$. An
analogue to the ergodic decomposition theorem holds (\cite{Katok05},
Proposition 4.1.12), so we will in some instances without loss of generality
assume that $\mu \in \mathcal{M(K)}$ is ergodic. Note, for example, that
given any ergodic measure $\mu \in \mathcal{M(X)}$, by ergodicity there is $%
n $ large enough so that $\mu (\mathcal{K}_{n})=1$, thus \mbox{supp}$(\mu
)\subseteq \mathcal{K}_{n}$ as $\mathcal{K}_{n}$ is closed. Therefore
assuming $\mu \in \mathcal{M(K)}$ rather than $\mu \in \mathcal{M(X)}$ is
not a restriction.

Also note that, given a $(\varphi ,T)$-ergodic measure, the Birkhoff
averages with respect to $\varphi ^{t}$ or $T$ do not necessarily $\mu $%
-a.e. converge to the same value. The reason is that a $(\varphi ,T)$%
-ergodic measure is not necessarily either $\varphi $-ergodic or $T$-ergodic.

First we show that $\varphi $ is invertible on the support of any $(\varphi
,T)$-invariant measure.

\begin{lemma}
\label{l:flow}If $\mu $ is $(\varphi ,T)$-invariant supported on $\mathcal{K}%
_{n}$ for some $n\in \mathbb{N}$, then in the DC case $\varphi ^{t}|_{\text{%
supp}(\mu )}$ is a continuous flow, and in the AC case $\varphi ^{1}|_{\text{%
supp}(\mu )}$ is a homeomorphism.
\end{lemma}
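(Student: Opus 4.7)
The plan is to show $\varphi^{t}|_{\mathrm{supp}(\mu)}$ (for $t\ge 0$ in the DC case, or $t=1$ in the AC case) is a continuous bijection of $\mathrm{supp}(\mu)$ onto itself. Since $\mathrm{supp}(\mu)$ is closed in the compact Hausdorff space $\mathcal{K}_{n}$, hence itself compact, a continuous self-bijection is automatically a homeomorphism. In the DC case one then extends the semiflow to a flow by $\varphi^{-t}:=(\varphi^{t})^{-1}$ for $t>0$; joint continuity of $(t,u)\mapsto\varphi^{t}(u)$ at negative $t$ comes from a standard sequential-compactness argument, namely any accumulation point $v$ of $\varphi^{s_{n}}(u_{n})$ with $s_{n}\to s<0$ and $u_{n}\to u$ must satisfy $\varphi^{-s}(v)=u$, hence $v=\varphi^{s}(u)$ by the injectivity established below.

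For injectivity, take $u,v\in\mathcal{K}_{n}$ with $\varphi^{t}(u)=\varphi^{t}(v)$. Because $R$ commutes with $\varphi$, I would lift $u,v$ to representatives in $K_{n}$ for which the equality holds in $K_{n}$. By forward invariance of $\mathcal{K}_{n}$ (Lemma \ref{l:existence}), both $u(s),v(s)$ remain in $K_{n}$ on $[0,t]$, so Proposition \ref{p:existence} makes $w:=u-v\in Y$ a solution of the linear cooperative system (\ref{r:system}) whose coefficients $a_{j},b_{j},c_{j}$ are uniformly bounded in $(j,s)\in\mathbb{Z}\times[0,t]$. Applying Gronwall to each seminorm $\|\cdot\|_{k,\infty}$ of the Fr\'echet space $Y$ (using $||j\pm 1|-|j||\le 1$ to absorb the shift) yields a two-sided estimate $e^{-C_{k}t}\|w(0)\|_{k,\infty}\le\|w(t)\|_{k,\infty}\le e^{C_{k}t}\|w(0)\|_{k,\infty}$, so $w(t)=0$ forces $w(0)=0$, i.e., $u=v$.

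For surjectivity, the image $\varphi^{t}(\mathrm{supp}(\mu))$ is compact (continuous image of compact), hence closed in $\mathrm{supp}(\mu)$, and is dense there: any open $U$ with $U\cap\mathrm{supp}(\mu)\neq\emptyset$ has $\mu(U)>0$, so by $\varphi^{t}$-invariance $\mu((\varphi^{t})^{-1}U)>0$, meaning $(\varphi^{t})^{-1}U$ meets $\mathrm{supp}(\mu)$ and hence $U$ meets $\varphi^{t}(\mathrm{supp}(\mu))$. A dense closed subset of a Hausdorff space is the whole, so $\varphi^{t}(\mathrm{supp}(\mu))=\mathrm{supp}(\mu)$.

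The main obstacle is really the injectivity, since $\mathcal{K}_{n}$ is not a Banach space: configurations in $K_{n}$ may grow linearly in $|j|$, so Picard--Lindel\"of on $l_{\infty}$ does not apply directly. The saving observation is that the difference of two $K_{n}$-solutions still lives in the Fr\'echet space $Y$ and satisfies a linear system with globally bounded coefficients, which is controllable via Gronwall on the seminorms defining $Y$. Once injectivity is secured, all remaining steps are standard compactness bookkeeping, applying uniformly to the AC and DC cases.
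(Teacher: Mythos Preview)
Your proof is correct and follows essentially the same architecture as the paper: show that $\varphi^{t}$ is a continuous bijection of the compact set $\mathrm{supp}(\mu)$ onto itself, then invoke compactness to conclude it is a homeomorphism. The paper obtains injectivity by citing the local-flow property from Lemma~\ref{l:existence}, while you supply a self-contained backward-uniqueness argument via Gronwall on the weighted seminorms $\|\cdot\|_{k,\infty}$; your observation that the difference $w=u-v$ of two $K_n$-configurations lies in $Y$ and satisfies a linear system with uniformly bounded coefficients is exactly what makes this work and is a nice way to sidestep the fact that $K_n$ itself is not a Banach space. For surjectivity the paper simply asserts $\varphi^{t}(\mathrm{supp}(\mu))=\mathrm{supp}(\mu)$ from $(\varphi,T)$-invariance, whereas you spell out the density-plus-closedness argument; both are fine. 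Your treatment of extending the semiflow to a flow in the DC case via sequential compactness is also more explicit than the paper's, which leaves this implicit.
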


\begin{proof}
As \mbox{supp}$(\mu )$ is $(\varphi ,T)$-invariant and $\varphi $ is
continuous, it is easy to see that for any $t\geq 0$ (in the AC case $t$ an
integer), $\varphi ^{t}$(supp$(\mu ))=$supp$(\mu )$. By Lemma \ref%
{l:existence}, $\varphi $ is a local flow, thus injective. We conclude that $%
\varphi ^{t}$ is bijective on supp$(\mu )$. As $\mathcal{X}$ is metrizable
(a quotient space of a metric space, with metric induced by any norm (\ref%
{r:norm})) and $\mathcal{K}_{n}$ is compact, supp$(\mu )$ is compact and $%
\varphi ^{t}:$supp$(\mu )\rightarrow $supp$(\mu )$ is a homeomorphism for
all $t$.
\end{proof}

\begin{proof}[Proof of Theorem \protect\ref{t:main1}.]
Assume $\mu \in \mathcal{M(K)}$ is $(\varphi ,T)$-invariant. First we show
that $t\mapsto Z(\mu (t))$ (where $\mu (0)=\mu $) is constant. In the DC
case, this must be true as $\mu (t)=\mu (0)$ for all $t\geq 0$. In the AC
case, this follows from $\mu (n)=\mu (0)$ for integer $n$, and Lemma \ref%
{p:decreasing1} which showed that $t\mapsto Z(\mu (t))$ is non-decreasing.

It is clear from the definition that the map $\pi :\mathcal{K\rightarrow }%
\mathbb{T}\times \mathbb{R}$ defined with (\ref{r:pi}) is continuous. We
first show that $\pi |_{\text{supp}(\mu )}\,\ $is injective. Indeed if $%
u^{1},u^{2}\in $supp$(\mu )$ are such that $\pi (u^{1})=\pi (u^{2})$, then $%
u^{1}-u^{2}+r$ has a singular zero of a finite or infinite degree for some $%
r\in \mathbb{Z}$. If the degree is finite, by Proposition \ref{p:strict2}, $%
t\mapsto Z(\nu (t))$ is strictly decreasing at $t=0$, where $\nu (0)=\mu $,
which is a contradiction. Now if the degree of the zero is infinite in the
DC\ case, by Lemma \ref{l:infinite} and $\varphi $-invariance of $\mu $, one
can find other $v^{1},v^{2}\in $supp$(\mu )$ so that $v^{1}-v^{2}+r$ has a
singular zero of a finite degree, which reduces it to the previous case.

Consider the case of a singular zero of infinite degree in the AC case. By
Lemma \ref{l:infinite}, we find $v^{1}(t_{0})-v^{2}(t_{1})$ with a singular
zero of a finite degree such that $v^{1}(0),v^{2}(0)\in $supp$(\mu )$. By
the same argument as above, we show that $t\mapsto Z(\mu (t+t_{0}),\mu
(t+t_{1}))$ is non-decreasing with period 1, thus constant, which is a
contradiction with Proposition \ref{p:strict1} and $v^{1}(t_{0})\in $supp$%
(\mu (t_{0}))$, $v^{2}(t_{1})\in $supp$(\mu (t_{1}))$.

Now assume the contrary, i.e. that there are two $(\varphi ,T)$-invariant
measures $\mu ^{1},\mu ^{2}\in \mathcal{M(K)}$, so that for some $u^{1}\in $%
supp$(\mu ^{1})$, $u^{2}\in $supp$(\mu ^{2})$, we have $\pi (u^{1})=\pi
(u^{2})$. But $\mu =\mu ^{1}/2+\mu ^{2}/2$ is also a $(\varphi ,T)$%
-invariant measure whose support contains both $u^{1},u^{2}$, which is
impossible.

Finally, the case $\mu \in \mathcal{M(X)}$ reduces by the ergodic
decomposition theorem to analysis on $\mathcal{M(K}_{n}\mathcal{)}$ for
sufficiently large $n$.
\end{proof}

\begin{remark}
\label{r:one}We actually proved more in the proof above: if $u,v\in \mathcal{%
A}$, then $u-v$ can not\ have a singular zero.
\end{remark}

\begin{proof}[Proof of Theorem \protect\ref{t:main2}.]
By definition, $\pi :\mathcal{K}\rightarrow \widetilde{\mathcal{K}}$ is
continuous and by Theorem \ref{t:main1} it is bijective. As $\mathcal{K}$ is
compact and $\widetilde{\mathcal{K}}$ a metric space, $\pi :\mathcal{K}%
\rightarrow \widetilde{\mathcal{K}}$ is a homeomorphism. The claim now
follows from Lemma \ref{l:flow} and definition (\ref{r:DEF}) of $\widetilde{T%
},\widetilde{\varphi }$.
\end{proof}

We complete the Section with a conjecture and a couple of comments on the
likely structure of the characteristic map $\widetilde{T}$ associated to (%
\ref{r:FK}).

\begin{conjecture}
The map $\widetilde{T}$ can be canonically extended to a homeomorphism $%
\widetilde{T}:\mathbb{T}^{1}\times \mathbb{R}\rightarrow \mathbb{T}%
^{1}\times \mathbb{R}$, diffeomorphic to an area-preserving twist
diffeomorphism $f_{V,F}$.
\end{conjecture}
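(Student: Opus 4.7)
The goal is to extend $\widetilde{T}$ from its natural domain inside $\widetilde{\mathcal{A}}$ to the entire cylinder $\mathbb{T}^{1}\times\mathbb{R}$ and identify it with an area-preserving twist diffeomorphism $f_{V,F}$. My plan would have three stages. First, I would construct the candidate $f_{V,F}$ directly. In the DC case, mimicking the equilibrium construction, I would set $f_{V,F}(x,y)=(x+y \bmod 1,\, u_{2}-x-y)$ where $u_{2}$ is the unique solution of
\begin{equation*}
V_{2}(x,x+y)+V_{1}(x+y,u_{2})=F+G(x,y)
\end{equation*}
for a correction term $G$ encoding the non-equilibrium contribution. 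The twist condition $V_{12}\leq-\delta<0$ guarantees unique solvability in $u_{2}$ once $G$ is specified, and the implicit function theorem produces a diffeomorphism of the cylinder with the twist property. The natural candidate is $G(x,y)=-du_{1}/dt$ evaluated on any solution whose initial data projects to $(x,y)$; the key task is to show that on $\widetilde{\mathcal{A}}$ this quantity depends only on $(x,y)$ (which follows from injectivity of $\pi$), and that it admits a continuous extension to the entire cylinder, for instance as a limit of spatially periodic approximants or via a selection principle.

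Second, I would check that $f_{V,F}$ restricted to $\widetilde{\mathcal{A}}$ agrees with $\widetilde{T}$. On equilibria this is immediate since $G\equiv 0$, and on uniformly sliding states (Theorem \ref{t:main4}) it follows from the explicit form of those orbits; for the remaining orbits in $\mathcal{A}$ one would appeal to compatibility of $G$ with the dynamics via Theorems \ref{t:main1}--\ref{t:main2}. For the third stage, area preservation, my approach would be to exploit the $(\varphi,T)$-invariant measures parametrised by rotation number provided by Theorem \ref{t:main3}. Projecting these under $\pi$ yields a family of $\widetilde{T}$-invariant measures on the cylinder; if this family is rich enough, it must force $f_{V,F}$ to preserve Lebesgue measure (or an equivalent smooth measure). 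An alternative route is to seek a generating-function formulation with a dressed potential $\widetilde{V}(u_{0},u_{1})=V(u_{0},u_{1})-F u_{1}+H(u_{0},u_{1})$, so that area preservation follows automatically from the variational/symplectic structure.

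The main obstacle is undoubtedly the area-preservation stage. The underlying semiflow is genuinely dissipative (the intersection count $Z$ is a Lyapunov function), so there is no pre-existing symplectic form to exploit, and the invariant $2$-form on the cylinder must be constructed from scratch. The rotation-number-parametrised family of measures is a promising ingredient, but converting it into a smooth invariant density requires regularity (e.g.\ absolute continuity with respect to $dx\wedge dy$) that is not furnished by the measure-theoretic tools of Section \ref{s:measure}. Moreover, one must rule out a pathological geometry for $\widetilde{\mathcal{A}}$ — for example a Cantor-like subset of the cylinder whose complementary gaps admit no canonical filling, in analogy with the $F=0$ case where invariant circles fail to exist beyond the breakup threshold. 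Overcoming this would likely require new a priori estimates on $\widetilde{\mathcal{A}}$, perhaps via variational arguments extending Aubry--Mather theory to the driven setting.
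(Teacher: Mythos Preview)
The statement you are attempting to prove is a \emph{conjecture}, not a theorem: the paper explicitly labels it as such and offers no proof. Immediately after stating it, the author only adds heuristic remarks about the expected dependence of $f_{V,F}$ on $\overline{F}$ and $\sigma(F)$, and earlier in the introduction writes that ``we do not know whether in general the maps $\widetilde{T}$ and $\widetilde{\varphi}^{t}$ can be naturally extended to the entire cylinder.'' There is therefore nothing in the paper to compare your proposal against.

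Your write-up is better described as a research outline than a proof, and to your credit you recognise this: you identify the extension of $G$ off $\widetilde{\mathcal{A}}$ and the area-preservation step as genuine gaps, and you correctly observe that the dissipative nature of the semiflow (the Lyapunov function $Z$) means there is no ambient symplectic structure to borrow. These are precisely the reasons the statement remains open. A few of your intermediate steps are also not justified by the paper's results: Theorem~\ref{t:main3} gives one ergodic measure per rotation number, which is far from enough to force a smooth invariant density; and the ``selection principle'' or ``limit of spatially periodic approximants'' for extending $G$ continuously is exactly the missing ingredient the conjecture is asking for. In short, your proposal is a reasonable sketch of where one might look, but it does not close the problem, and the paper does not claim to either.
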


The numerical simulations \cite{Braun:04} and our construction suggest the
following description of $\widetilde{T}$. Given an AC forcing $F$, we can
define%
\begin{eqnarray*}
\overline{F} &=&\int_{0}^{1}F(t)dt, \\
\sigma (F) &=&\left( \int_{0}^{1}(F(t)-\overline{F})^{2}dt\right) ^{1/2}.
\end{eqnarray*}

We conjecture that for a fixed $V,\sigma (F)$ (including the DC\ case $%
\sigma (F)=0$), there is $F_{c}$ such that for all $\overline{F}\geq F_{c}$,
the map $f_{V,F}$ is integrable. Conversely, for a fixed $V,\overline{F}$,
the limit $\sigma (F)\rightarrow \infty $ corresponds to an "anti-integrable
limit" of the family of maps $f_{V,F}$.

\section{The Aubry-Mather theorem \label{s:amt}}

This section is dedicated to the proof of Theorem \ref{t:main3}. We first
define rotationally ordered configurations in $\mathbb{R}^{Z}$ and the
rotation number and deduce their main properties, then show in general
existence of $(\varphi ,T)$-invariant measures, and from this complete the
proof.

Recall the definitions of translations $T(u)_{i}=u_{i+1}\ $and $Ru=u+1$
commuting with (\ref{r:FK}) and each other. We can define $%
T_{p,q}=T^{p}R^{q} $, or $T_{p,q}(u)_{i}=u_{i+p}+q$ for any $p,q\in \mathbb{Z%
}.\,$We say that a configuration $u\in \mathbb{R}^{Z}$ is rotationally
ordered, if for any $p,q\in \mathbb{Z}$, either $T_{p,q}u\leq u$ or $%
T_{p,q}u\geq u$ (i.e. the set of all $T_{p,q}$ translations is totally
ordered).

The following is an elementary result of Aubry-Mather theory; for a proof
see e.g. Bangert \cite{Bangert88}, Corollary 3.2:

\begin{lemma}
\label{l:roc}If $u\in \mathbb{R}^{Z}$ is a rotationally ordered
configuration, then there exists a unique $\rho \in \mathbb{R}$ so that for
all $i,j\in \mathbb{Z}$,%
\begin{equation}
|u_{j}-u_{i}-\rho (j-i)|\leq 1.  \label{r:width}
\end{equation}
\end{lemma}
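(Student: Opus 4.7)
The plan is to extract the rotation number via a classical subadditivity/superadditivity argument and to use the rotationally ordered hypothesis to show that the upper and lower limits coincide.

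For each integer $p>0$ I will define
\begin{equation*}
\phi(p)=\sup_{i\in\mathbb{Z}}(u_{i+p}-u_{i}),\qquad
\psi(p)=\inf_{i\in\mathbb{Z}}(u_{i+p}-u_{i}).
\end{equation*}
First I would translate the rotationally ordered assumption into a bound on $\phi(p)-\psi(p)$. For fixed $p>0$ and $q\in\mathbb{Z}$, the inequality $T_{p,q}u\ge u$ is equivalent to $q\ge -\psi(p)$, and $T_{p,q}u\le u$ to $q\le -\phi(p)$. Since every integer $q$ must satisfy one of these, the open interval $(-\phi(p),-\psi(p))$ may not contain an integer, which forces $\phi(p)-\psi(p)\le 1$. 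In particular $\phi(p),\psi(p)$ are finite, so the configuration has bounded spacing uniformly in $i$ for each fixed $p$.

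Next I would produce $\rho$ by Fekete's lemma. The function $\phi$ is subadditive, $\phi(p+q)\le\phi(p)+\phi(q)$, and $\psi$ is superadditive, so the limits
\begin{equation*}
\rho_{+}=\lim_{p\to\infty}\frac{\phi(p)}{p}=\inf_{p}\frac{\phi(p)}{p},\qquad
\rho_{-}=\lim_{p\to\infty}\frac{\psi(p)}{p}=\sup_{p}\frac{\psi(p)}{p}
\end{equation*}
exist in $[-\infty,+\infty]$, with $\rho_{-}\le\rho_{+}$. Using the bound $\phi(p)-\psi(p)\le 1$ established above, $(\phi(p)-\psi(p))/p\to 0$, so $\rho_{+}=\rho_{-}=:\rho\in\mathbb{R}$. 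Fekete's lemma gives the two-sided inequality $\psi(p)\le p\rho\le \phi(p)$ for every $p>0$.

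Finally, for $j>i$ set $p=j-i$. By definition $\psi(p)\le u_{j}-u_{i}\le\phi(p)$, while $\psi(p)\le p\rho\le\phi(p)$, and the interval $[\psi(p),\phi(p)]$ has length at most $1$; therefore $|u_{j}-u_{i}-\rho(j-i)|\le 1$. The case $j<i$ follows by swapping roles, and $j=i$ is trivial. Uniqueness is immediate: any other $\rho'$ satisfying \eqref{r:width} would obey $|\rho-\rho'||j-i|\le 2$ for all $i\ne j$, forcing $\rho=\rho'$. The main (and in fact only) nontrivial step is the bound $\phi(p)-\psi(p)\le 1$; once this is in place, the rest is a routine application of Fekete's lemma.
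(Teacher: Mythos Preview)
Your proof is correct. The paper itself does not give a proof of this lemma; it simply cites Bangert~\cite{Bangert88}, Corollary~3.2, as ``an elementary result of Aubry-Mather theory,'' and remarks afterwards that $\rho$ can be characterized as $\sup\{p/q:T_{p,q}u\le u\}=\inf\{p/q:T_{p,q}u\ge u\}$. Your argument via the subadditive/superadditive functions $\phi,\psi$ and Fekete's lemma is essentially the classical proof one finds in the Aubry--Mather literature, and your characterization of $\rho$ agrees with the one the paper quotes.

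One cosmetic remark: the finiteness of $\phi(p)$ and $\psi(p)$ is logically needed \emph{before} you can speak of the open interval $(-\phi(p),-\psi(p))$, whereas you state it as a consequence. This is harmless, since the same dichotomy (every integer $q$ satisfies $q\ge-\psi(p)$ or $q\le-\phi(p)$) directly rules out $\phi(p)=+\infty$ (else the second alternative is vacuous and the first fails for $q\to-\infty$) and similarly $\psi(p)=-\infty$; you may wish to reorder the sentence to make this explicit.
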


The number$\rho $ can be characterized as $\rho =\sup \{p/q:T_{p,q}u\leq
u\}=\inf \{p/q:T_{p,q}u\geq u\}$.

If $u\in \mathbb{R}^{Z}$ is such that the limit%
\begin{equation*}
\rho (u)=\lim_{|j-i|\rightarrow \infty }\frac{u_{j}-u_{i}}{j-i}
\end{equation*}%
exists, we call it the rotation number (or mean spacing) of a configuration.
For rotationally ordered configurations, $\rho \,$in (\ref{r:width}) is its
rotation number.

Now we show existence of $(\varphi ,T)$-invariant measures.

\begin{lemma}
\label{r:existence}Assume $\mathcal{R}\subset \mathcal{K}$ is a closed,
non-empty $(\varphi ,T)$-invariant set. Then there exists a $(\varphi ,T)$%
-invariant probability measure supported on $\mathcal{R}$.
\end{lemma}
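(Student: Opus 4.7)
The plan is to carry out a Krylov--Bogolyubov style averaging adapted to the commuting pair $(T,\varphi)$. First, observe that $\mathcal{R}$ is compact: it is a closed subset of $\mathcal{K}=\mathcal{K}_n$, which is homeomorphic to the compact product $[-n,n]^{\mathbb{Z}}$ (modulo $R$). The space $\mathcal{M}(\mathcal{R})$ of Borel probability measures on $\mathcal{R}$, with the weak topology, is thus non-empty, convex, and compact. Since $\mathcal{R}$ is $(\varphi,T)$-invariant, both $T$ and $\varphi^t$ (for $t\geq 0$ in the DC case, $t\in\mathbb{Z}_+$ in the AC case) restrict to continuous self-maps of $\mathcal{R}$, and the induced pushforwards $T_*,\varphi^t_*:\mathcal{M}(\mathcal{R})\to\mathcal{M}(\mathcal{R})$ are continuous affine maps which commute, because $T$ and $\varphi^t$ commute on $\mathcal{R}$.

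Next, I would pick any $u_0\in\mathcal{R}$ and form the two-parameter averages
\begin{equation*}
\mu_{N,S} \;=\; \frac{1}{(2N+1)\,S}\sum_{k=-N}^{N}\int_{0}^{S}\delta_{T^k\varphi^t u_0}\,dt
\end{equation*}
in the DC case, and the analogous double sum $\mu_{N,M}=\frac{1}{(2N+1)M}\sum_{k=-N}^{N}\sum_{j=0}^{M-1}\delta_{T^k\varphi^j u_0}$ in the AC case. Each $\mu_{N,S}$ is a Borel probability measure on $\mathcal{R}$. By weak compactness of $\mathcal{M}(\mathcal{R})$, extract a weak cluster point $\mu$ as $N,S\to\infty$ (say along a diagonal sequence). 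Then I would verify invariance by testing against a continuous $f:\mathcal{R}\to\mathbb{R}$: for $T$-invariance, only the outermost shells $k=N+1$ and $k=-N$ survive in $\int f\,dT_*\mu_{N,S}-\int f\,d\mu_{N,S}$, giving a bound $\leq 2\|f\|_\infty/(2N+1)\to 0$; for $\varphi^s$-invariance with any fixed $s\geq 0$, only the time-boundary contributes, with bound $\leq 2s\|f\|_\infty/S\to 0$. Passing to the limit shows $T_*\mu=\mu$ and $\varphi^s_*\mu=\mu$ for all relevant $s$. Since every $\mu_{N,S}$ is supported on the closed set $\mathcal{R}$, so is $\mu$.

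There is no substantive obstacle here; the argument is the classical Følner/Krylov--Bogolyubov construction for the amenable action of $\mathbb{Z}\times\mathbb{R}$ (DC) or $\mathbb{Z}^2$ (AC) on the compactum $\mathcal{R}$. Equivalently, one could invoke the Markov--Kakutani fixed point theorem directly on the compact convex set $\mathcal{M}(\mathcal{R})$ with the commuting affine family $\{T_*^n\varphi^t_*\}$, which is abelian by commutativity of $T$ and $\varphi$. The only minor point to verify is continuity of $\varphi^t_*$ at each $t$, which is immediate from continuity of $\varphi^t$ on $\mathcal{K}$ established in Lemma \ref{l:existence}.
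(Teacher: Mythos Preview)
Your proof is correct and follows essentially the same Krylov--Bogolyubov averaging argument as the paper: start from a Dirac mass on $\mathcal{R}$, average over both the $T$- and $\varphi$-actions, and pass to a weak limit using compactness of $\mathcal{R}$. The only differences are cosmetic (you use a two-sided sum in $k$ and two independent parameters $N,S$ rather than the paper's single parameter $n$ with one-sided sums), and your mention of Markov--Kakutani as an alternative is a valid shortcut the paper does not spell out.
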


\begin{proof}
Choose any Borel probability measure $\nu $ on $\mathcal{R}$ (say Dirac
supported on any element).\ We in the DC\ case define 
\begin{equation*}
\nu ^{n}=\frac{1}{n^{2}}\int_{0}^{n}\left( \sum_{k=1}^{n}\varphi _{\ast
}^{t}T_{\ast }^{k}\nu \right) dt,
\end{equation*}%
and in the AC\ case 
\begin{equation*}
\nu ^{n}=\frac{1}{n^{2}}\sum_{j=1}^{n}\sum_{k=1}^{n}\varphi _{\ast
}^{j}T_{\ast }^{k}\nu dt.
\end{equation*}%
Here $\varphi _{\ast }^{t},T_{\ast }^{k}$ is pulling of measures with
respect to $\varphi ^{t},T^{k}$. As $\mathcal{R}$ is compact and $\nu ^{n}$
are well defined probability measures supported on $\mathcal{R}$, $\nu ^{n}$
has a weakly convergent subsequence $\nu ^{n_{k}}\rightarrow \mu $ and $\mu $
is supported on $\mathcal{R}$. It is easy to check that $\mu $ is $(\varphi
,T)$-invariant (e.g. by checking this for continuos functions $f:\mathcal{%
R\rightarrow }\mathbb{R}$ and then applying the Riesz representation
Theorem).
\end{proof}

\begin{proof}[Proof of Theorem \protect\ref{t:main3}.]
Denote by $\mathcal{R}_{\rho }$ the set of all rotationally ordered
configurations with the rotation number $\rho $. It is non-empty, as $u\,\ $%
defined with $u_{i}=\rho i$ is in $\mathcal{R}_{\rho }$. From (\ref{r:width}%
) one immediately sees that $\mathcal{R}_{\rho }\subset \mathcal{K}_{n}$
where $n$ is any integer $\geq |\rho |+1$. From the definition it follows
that $\mathcal{R}_{\rho }$ is closed, thus it is compact. As $T_{p,q}$
commutes with $\varphi $ and $T$, by Lemma \ref{p:monotonicity} $\varphi $
is order preserving and $T$ is trivially order preserving, we see that $%
\mathcal{R}_{\rho }$ is $(\varphi ,T)$-invariant. It is now enough to apply
Lemma \ref{r:existence} to $\mathcal{R}_{\rho }$.
\end{proof}

\section{Description of invariant measures in the\ DC\ case \label{s:DCcase}}

Here we prove Theorem \ref{t:main4}, and so describe the set of $(\varphi ,T)
$-invariant measures on $\mathcal{X}$ and $\mathcal{K}$, denoted by $%
\mathcal{E(X)}$ and $\mathcal{E(K)}$, in the DC case. The standing
assumption in this section is that $F$ in (\ref{r:FK})\ is DC. We first
define the average speed of a measure and a configuration. We then show by
applying the Poincar\'{e}-Bendixson theorem that any $\mu \in \mathcal{E}(K)$
is entirely supported on equilibria and periodic orbits. If the average
speed is $\not=0$, then $\mu $ is supported on a single periodic orbits.
Finally, if the average speed is zero, we show that $\mu $ is supported on
equilibria.

Let $\mu \in \mathcal{E(X)}$ and $u\in \mathcal{X}$. We can define their
average speeds $v(\mu )$, $v(u)$ as%
\begin{eqnarray}
v(\mu ) &=&\int (u_{0}(1)-u_{0}(0))d\mu \text{,}  \label{r:speed1} \\
v(u) &=&\lim_{t\rightarrow \infty }\frac{u_{0}(t)-u_{0}(0)}{t},
\label{r:speed2}
\end{eqnarray}%
where $v(u)$ is defined when (\ref{r:speed2}) is convergent.

\begin{lemma}
\label{l:speed}If $\mu $ is $(\varphi ,T)$-ergodic on $\mathcal{X}$, then
for $\mu $-a.e. $u$, $v(u)=v(\mu )$.
\end{lemma}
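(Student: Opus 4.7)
The plan is to apply Birkhoff's ergodic theorem to the discrete-time map $\varphi^{1}$ and the function $f(u)=u_{0}(1)-u_{0}(0)$, and then use the $(\varphi,T)$-ergodicity hypothesis to upgrade the resulting a.e.-invariant function to a constant. Since $\mu$ is $(\varphi,T)$-ergodic on $\mathcal{X}$, ergodicity forces $\mu(\mathcal{K}_{n})=1$ for some $n$, so all considerations may be carried out on the compact set $\mathcal{K}_{n}$; in particular $f$ is bounded (because $du_{0}/dt=-V_{2}(u_{-1},u_{0})-V_{1}(u_{0},u_{1})+F$ is bounded on $\mathcal{K}_{n}$), hence integrable.

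Applying Birkhoff to $(\varphi^{1},\mu)$ and to $f$, the limit
\[
g(u)\;=\;\lim_{n\to\infty}\frac{1}{n}\sum_{k=0}^{n-1}f(\varphi^{k}u)\;=\;\lim_{n\to\infty}\frac{u_{0}(n)-u_{0}(0)}{n}
\]
exists for $\mu$-a.e.\ $u$ and satisfies $\int g\,d\mu=\int f\,d\mu=v(\mu)$. By construction $g$ is $\varphi^{1}$-invariant; and for any real $s\in[0,1]$, writing $g(\varphi^{s}u)=\lim_{n}(u_{0}(n+s)-u_{0}(s))/n$, the bound $|u_{0}(n+s)-u_{0}(n)|\leq \sup_{\mathcal{K}_{n}}|du_{0}/dt|$ shows that $g(\varphi^{s}u)=g(u)$, i.e.\ $g$ is invariant for the full flow $\varphi^{t}$.

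The key additional step is $T$-invariance of $g$. Indeed,
\[
g(Tu)-g(u)=\lim_{n\to\infty}\frac{1}{n}\bigl((u_{-1}(n)-u_{0}(n))-(u_{-1}(0)-u_{0}(0))\bigr),
\]
and since $u(t)\in\mathcal{K}_{n}$ implies $|u_{-1}(t)-u_{0}(t)|\leq n$ for all $t$, the right-hand side is zero. Thus $g$ is $(\varphi,T)$-invariant, and by ergodicity of $\mu$ it is constant $\mu$-a.e., necessarily equal to $\int g\,d\mu=v(\mu)$.

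Finally, to pass from integer to real time in the definition (\ref{r:speed2}), write $t=\lfloor t\rfloor+\{t\}$; the boundedness of $du_{0}/dt$ on $\mathcal{K}_{n}$ gives $|u_{0}(t)-u_{0}(\lfloor t\rfloor)|\leq C$, so $(u_{0}(t)-u_{0}(0))/t$ and $(u_{0}(\lfloor t\rfloor)-u_{0}(0))/\lfloor t\rfloor$ have the same limit. Hence $v(u)=g(u)=v(\mu)$ for $\mu$-a.e.\ $u$. The only non-routine ingredient is the $T$-invariance argument for $g$: without it, one could not conclude constancy, since a $(\varphi,T)$-ergodic measure need not be $\varphi$-ergodic, and this is precisely the place where the bounded-spacing hypothesis $\mu\in\mathcal{E}(\mathcal{X})$ enters.
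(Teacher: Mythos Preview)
Your proof is correct and follows essentially the same approach as the paper: apply Birkhoff's theorem (using $\varphi$-invariance of $\mu$) to obtain a.e.\ existence of the limit $v(u)$, use the uniform bound on $|u_{i+1}(t)-u_i(t)|$ from $\mathcal{K}_n$ to conclude $v(Tu)=v(u)$, and then invoke $(\varphi,T)$-ergodicity to get constancy equal to $v(\mu)$. Your version is more explicit---you separate the integer-time Birkhoff limit from the passage to real time and spell out the integrability of $f$---but the key idea (that bounded spacing yields $T$-invariance of the limit, which is exactly what is needed since $\mu$ need not be $\varphi$-ergodic) is identical to the paper's.
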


\begin{proof}
As $\mu $ is $\varphi $-invariant (but not necessarily $\varphi $-ergodic!),
there is a set of full measure such that $v(u)$ is well defined. As by
definition and Lemma \ref{l:existence}\ the spacing $|u_{i+1}(t)-u_{i}(t)|$
is bounded uniformly in $i,t$, the definition (\ref{r:speed2})\ is
independent of $i$ in $u_{i}$, thus $v(u)=v(Tu)$. Thus $v(u)$ is defined on
a set of full measure, $\varphi $ and $T$-invariant measurable function, so
it must be constant $\mu $-a.e. and equal to $v(\mu )$.
\end{proof}

We will use a version of the well-known Poincar\'{e}-Bendixson theorem on
asymptotics of 2-dimensional flows. For a general discussion of the Poincar%
\'{e}-Bendixson theorem see e.g. \cite{Katok05}, Section 14.1;\ for the
proof of the exact formulation below see \cite{Fiedler89}, the proof of
Proposition 1 (the only difference being that they consider compact subsets
of the plane, which makes no difference, as the argument works for any
compact sets which can be embedded in a two dimensional sphere).

\begin{lemma}
\label{l:uc1}Assume $\widetilde{K}$ is a compact subset of the cylinder $%
\mathbb{T}^{1}\times \mathbb{R}$, and $\tilde{\varphi}$ a complete
continuous flow on $\widetilde{K}$ generated by a $C^{0}$ vector field on $%
\widetilde{K}$. Then the only recurrent points of $\tilde{\varphi}$ are
equilibria and periodic orbits.
\end{lemma}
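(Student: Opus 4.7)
The plan is to reduce to the classical Poincar\'e-Bendixson theorem for flows on the two-sphere. The cylinder $\mathbb{T}^{1}\times \mathbb{R}$ embeds homeomorphically as $S^{2}\setminus \{N,S\}$ for a pair of distinguished points $N,S\in S^{2}$ (the two ends of the cylinder). Since $\widetilde{K}$ is compact in the cylinder, it is bounded in the $\mathbb{R}$-direction, and its image under this embedding is a compact subset of $S^{2}$ bounded away from $\{N,S\}$. Without loss of generality I can therefore view $\tilde{\varphi}$ as a complete continuous flow on a compact subset of $S^{2}$, for which the Jordan curve theorem in its familiar form becomes available.

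Now fix a recurrent point $p\in \widetilde{K}$, so $p\in \omega (p)$, and assume that $p$ is not an equilibrium. The non-vanishing of the continuous vector field at $p$ allows one to construct a short local transversal arc $\Sigma $ through $p$, and recurrence produces a sequence of return times $0=t_{0}<t_{1}<t_{2}<\cdots $ and return points $p_{k}=\tilde{\varphi}^{t_{k}}(p)\in \Sigma $ with some subsequence $p_{n_{j}}\to p$. The central step is the Poincar\'e-Bendixson monotonicity lemma: the closed curve formed by the orbit arc from $p_{k}$ to $p_{k+1}$ together with the subarc of $\Sigma $ joining them is a Jordan curve in $S^{2}$ which, by the Jordan curve theorem, separates $S^{2}$ into two open topological disks. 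Because the flow crosses $\Sigma $ in a fixed transverse direction and because orbits of a flow are pairwise disjoint, the next intersection $p_{k+2}$ must lie on the same side of $p_{k+1}$ on $\Sigma $ as $p_{k+1}$ lies of $p_{k}$. Iterating, $(p_{k})_{k\geq 0}$ is strictly monotone along $\Sigma $.

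A strictly monotone sequence on the one-dimensional arc $\Sigma $ cannot accumulate at its initial point $p_{0}=p$ unless it is eventually constant, so the relation $p_{n_{j}}\to p$ forces $p_{1}=p_{0}$, which means that the orbit through $p$ closes up into a periodic orbit. The main technical obstacle will be carrying out this monotonicity step under only $C^{0}$ regularity of the vector field: without smoothness one loses the flowbox theorem, so the construction of the transversal arc and the verification that the orbit cannot re-enter through the arc segment joining $p_{k}$ to $p_{k+1}$ both have to be done using only the no-crossing property of a bona fide flow (guaranteed by the hypothesis that $\tilde{\varphi}$ is a flow) together with joint continuity of $(t,x)\mapsto \tilde{\varphi}^{t}(x)$. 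Modulo these topological technicalities, the argument then proceeds exactly as in the planar Poincar\'e-Bendixson proof of Fiedler and Mallet-Paret cited in the lemma statement.
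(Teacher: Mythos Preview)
Your proposal is correct and matches the paper's approach exactly: the paper does not give an independent proof of this lemma but simply cites \cite{Fiedler89}, Proposition~1, and remarks that ``the only difference being that they consider compact subsets of the plane, which makes no difference, as the argument works for any compact sets which can be embedded in a two dimensional sphere.'' Your embedding of the cylinder into $S^{2}\setminus\{N,S\}$ and subsequent sketch of the monotonicity argument is precisely a fleshing-out of that one-line remark, and your flagging of the $C^{0}$ issue (handled because a genuine flow is assumed, so orbit uniqueness is available) is appropriate.
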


We need the next lemma to be able to claim that if $\pi (u)$ is an
equilibrium, then $u$ must be an equilibrium.

\begin{lemma}
\label{l:uc2}If $u\in \mathcal{A}$, then $w=du/dt$ can not have a singular
zero.
\end{lemma}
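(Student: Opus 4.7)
The plan is to reduce this to tools already built, mirroring the proof of Theorem \ref{t:main1} but using $\widetilde{Z}$ in place of $Z$. Suppose for contradiction that $u \in \mathcal{A}$ and $w = du/dt$ has a singular zero at some time $t_0$; by time-translation invariance of the setup we may take $t_0 = 0$. By definition of $\mathcal{A}$, pick a $(\varphi,T)$-invariant probability measure $\mu$ with $u \in \mathrm{supp}(\mu)$; by the ergodic decomposition discussion in Section \ref{s:characteristic} we may further assume $\mu \in \mathcal{M}(\mathcal{K}_n)$ for some $n$. I would split into two cases depending on whether the singular zero of $du/dt$ has finite or infinite degree.

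In the finite-degree case, Proposition \ref{p:strict3} applies directly: since $u \in \mathrm{supp}(\mu)$ and $du/dt$ has a singular zero of finite degree, $t \mapsto \widetilde{Z}(\mu(t))$ is strictly decreasing at $t_0 = 0$. But $F$ is DC and $\mu$ is $(\varphi,T)$-invariant, so $\mu(t) \equiv \mu$ and hence $\widetilde{Z}(\mu(t))$ is constant in $t$ — contradiction.

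In the infinite-degree case, Proposition \ref{p:strict3} is not directly available, so I would invoke Lemma \ref{l:der2} to find times $t_0, t_1$ with $|t_0|,|t_1|$ arbitrarily small such that $u(t_0) \neq u(t_1)$ and $u(t_0) - u(t_1)$ possesses a singular zero of finite degree. Since the support of $\mu$ is $\varphi$-invariant, both $u(t_0)$ and $u(t_1)$ lie in $\mathrm{supp}(\mu) \subseteq \mathcal{A}$. This contradicts Remark \ref{r:one}, which asserts that no two configurations in $\mathcal{A}$ can have a singular zero of their difference.

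The main obstacle is the infinite-degree case, but Lemma \ref{l:der2} is precisely what was designed to handle it by producing an auxiliary finite-degree singular zero, after which Remark \ref{r:one} closes the argument. The finite-degree case is essentially a direct application of Proposition \ref{p:strict3}, and the fact that $\mu$ is strictly time-invariant in the DC setting (not merely periodically invariant as in AC) is what makes the constancy of $\widetilde{Z}(\mu(t))$ immediate.
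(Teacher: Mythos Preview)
Your proof is correct and follows essentially the same approach as the paper's: split into the finite-degree case (handled via Proposition \ref{p:strict3} and constancy of $\widetilde{Z}(\mu(t))$) and the infinite-degree case (handled via Lemma \ref{l:der2}). The only cosmetic difference is that in the infinite-degree case the paper invokes Proposition \ref{p:strict1} directly to contradict constancy of $t\mapsto Z(\mu(t))$, whereas you invoke Remark \ref{r:one}, which is simply the same argument already packaged as a statement.
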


\begin{proof}
Find a measure $\mu \in \mathcal{E(K)\,}$for some $\mathcal{K=K}_{n}\,\ $so
that $u$ is in the support of $\mu $. As $\mu =\mu (0)$ is by definition $%
(\varphi ,T)$-invariant, $t\mapsto \tilde{Z}(\mu (t))$ is a constant
function. Now if $w=du/dt$ has a singular zero of a finite degree, from
Proposition \ref{p:strict3} we obtain that $\tilde{Z}(\mu (t))$ is strictly
decreasing at $t=0$, which is a contradiction. If $w$ has a singular zero of
infinite degree, by Lemma \ref{l:der2} we can find $v^{1},v^{2}$ in the
support of $\mu $ so that $v^{1}-v^{2}$ has a singular zero of finite
degree.\ This implies that by Proposition \ref{p:strict1}, $t\mapsto Z(\mu
(t))$ is strictly decreasing at $t=0$, which is again a contradiction with $%
\mu $ being $(\varphi ,T)$-invariant.
\end{proof}

We now show that all of this combined with our results on the projection $%
\pi $ gives a fairly good description of supports of $\mu \in \mathcal{E(X)}$%
.

\begin{lemma}
\label{l:uc3}Assume $\mu $ is $(\varphi ,T)$-ergodic. Then $\mu $ is
supported on $\varphi $-periodic orbits with the same period $t_{0}>0$, or
on equilibria.
\end{lemma}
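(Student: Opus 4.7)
The plan is to reduce the dynamics on $\mathrm{supp}(\mu)$ to a two-dimensional flow via the projection $\pi$, and then apply the Poincar\'e--Bendixson theorem (Lemma \ref{l:uc1}) together with Poincar\'e recurrence and ergodicity.

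First I would set up the two-dimensional picture. Since $\mu$ is $(\varphi,T)$-ergodic on $\mathcal{X}$, there is an $n$ with $\mathrm{supp}(\mu)\subseteq\mathcal{K}_n$, and by Theorem \ref{t:main1} the restriction $\pi|_{\mathrm{supp}(\mu)}$ is a continuous injection into the cylinder. The image $\widetilde{K}:=\pi(\mathrm{supp}(\mu))$ is compact, and $\pi:\mathrm{supp}(\mu)\to\widetilde{K}$ is a homeomorphism (continuous bijection between compact Hausdorff spaces); by Lemma \ref{l:flow} and Theorem \ref{t:main2}, $\widetilde{\varphi}^t$ is a complete continuous flow on $\widetilde{K}$. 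To apply Lemma \ref{l:uc1} I still need this flow to be generated by a $C^0$ vector field on $\widetilde{K}$, and this is where I expect the main technical care to be required: the right-hand side of \eqref{r:FK} at site $0$ and site $1$ depends on $u_{-1},u_0,u_1,u_2$, not just on $(u_0,u_1-u_0)$. The resolution is that the injectivity of $\pi$ on $\mathcal{A}$ provided by Theorem \ref{t:main1} gives a continuous inverse $\pi^{-1}:\widetilde{K}\to\mathrm{supp}(\mu)$, so substituting $u=\pi^{-1}(a,b)$ in \eqref{r:FK} expresses $(\dot u_0,\dot u_1-\dot u_0)$ as a continuous function of $(a,b)\in\widetilde{K}$.

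Next I would apply Poincar\'e recurrence to the $\varphi$-invariant measure $\mu$ on the compact set $\mathrm{supp}(\mu)$, producing a full-$\mu$-measure set $R$ of $\varphi$-recurrent configurations; their $\pi$-images are $\widetilde{\varphi}$-recurrent in $\widetilde{K}$, so by Lemma \ref{l:uc1} each is either a $\widetilde{\varphi}$-equilibrium or lies on a $\widetilde{\varphi}$-periodic orbit. Since $\mathcal{A}$ is $(\varphi,T)$-invariant, the full $\varphi$-orbit of any $u\in\mathrm{supp}(\mu)$ stays in $\mathcal{A}$; combined with injectivity of $\pi$ on $\mathcal{A}$, the identity $\widetilde{\varphi}^t\pi(u)=\pi(u)$ lifts to $\varphi^t u = u$, and a $\widetilde{\varphi}$-periodic image lifts to a $\varphi$-periodic point with the same minimal period.

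Finally I would use ergodicity to unify the period. The set of $\varphi$-equilibria in $\mathrm{supp}(\mu)$ is closed and $(\varphi,T)$-invariant, hence of $\mu$-measure $0$ or $1$; the value $1$ gives the first alternative. Otherwise, for each $s>0$ the set $A_s:=\{u:\varphi^s u=u\}$ is closed and $(\varphi,T)$-invariant, so $\mu(A_s)\in\{0,1\}$, and $S:=\{s>0:\mu(A_s)=1\}$ is a subsemigroup of $(0,\infty)$ that is closed (limits of periods are periods by continuity of $\varphi^s$) and, by the previous step, non-empty. The case $\inf S=0$ forces $S=(0,\infty)$ and hence $\mu$-a.e.\ point to be an equilibrium, already covered; otherwise $t_0:=\inf S\in S$, and a short divisibility argument (the minimal period of a non-equilibrium $u\in A_{t_0}$ must be $t_0/m$ for some positive integer $m$, while $t_0/m<t_0$ is excluded because $t_0/m\notin S$ gives $\mu(A_{t_0/m})=0$) shows that $\mu$-a.e.\ $u$ has minimal period exactly $t_0$, completing the proof.
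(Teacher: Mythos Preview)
Your proof follows essentially the same route as the paper: project $\mathrm{supp}(\mu)$ to a compact subset of the cylinder via $\pi$, verify the hypotheses of Lemma~\ref{l:uc1}, invoke Poincar\'e recurrence to get a full-measure set of periodic points and equilibria, and then use ergodicity to force a common period. Two small points are worth noting. First, your lifting step ``$\widetilde{\varphi}^t\pi(u)=\pi(u)\Rightarrow\varphi^t u=u$'' via injectivity of $\pi$ on $\mathrm{supp}(\mu)$ is correct and in fact slightly cleaner than the paper's argument, which appeals to Lemma~\ref{l:uc2} (no singular zeroes of $du/dt$ on $\mathcal{A}$) to lift $\widetilde{\varphi}$-equilibria to $\varphi$-equilibria; your flow-based argument bypasses that lemma entirely. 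Second, the assertion that $S=\{s>0:\mu(A_s)=1\}$ is non-empty does not follow immediately from ``$\mu$-a.e.\ $u$ is periodic'': that only gives $\mu(\bigcup_{s>0}A_s)=1$, an uncountable union. You need one more line, e.g.\ observe that the minimal-period function $\tau$ is Borel and $(\varphi,T)$-invariant, hence $\mu$-a.e.\ constant by ergodicity, which directly produces an $s\in S$ (and in fact already gives the common minimal period, making the semigroup argument redundant). The paper's parallel step, using the monotone $\{0,1\}$-valued function $t_0\mapsto\mu(\bigcup_{t\le t_0}\mathcal{P}_t)$, implicitly relies on the same countable-exhaustion idea.
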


\begin{proof}
As $\mu $ is $(\varphi ,T)$-ergodic, there exists $\mathcal{K}=\mathcal{K}%
_{n}$ for $n$ large enough so that $\mu \in \mathcal{E(K)}$. Take $%
\widetilde{K}$, $\tilde{\varphi}$ as in (\ref{r:DEF}). Now we deduce from
Theorems \ref{t:main1} and \ref{t:main2} that most of the assumptions of
Lemma \ref{l:uc1} are satisfied;\ we only need to show that $\tilde{\varphi}$
is generated by a $C^{0}$ vector field. One can easily check that for $%
(x,p)\in \mathbb{T}^{1}\times \mathbb{R}$, the vector field is given by%
\begin{equation}
(x,p)\mapsto \left[ 
\begin{array}{l}
d(\pi ^{-1}(x,p))_{0}/dt \\ 
d(\pi ^{-1}(x,p))_{1}/dt-d(\pi ^{-1}(x,p))_{0}/dt%
\end{array}%
\right] ,  \label{r:vf}
\end{equation}%
where $du_{0}/dt$, $du_{1}/dt$ for $u=\pi ^{-1}(x,p)$ are defined with the
right-hand side of (\ref{r:FK}). From Lemma \ref{l:uc1} we deduce that all $%
\tilde{\varphi}$-recurrent orbits on $\widetilde{K}$ are periodic orbits and
equilibria. Consider the pulled $\tilde{\varphi}$-invariant measure $\pi
_{\ast }\mu $ on $\widetilde{K}$. As $\widetilde{K}$ is metrizable and
compact, the set of $\tilde{\varphi}$-recurrent points has full measure with
respect to $\pi _{\ast }\mu $ (see e.g. \cite{Katok05}, the proof of
Proposition 4.1.18). Now note that by Theorem \ref{t:main1}, $\pi ^{-1}$
maps periodic orbits into periodic orbits, and by Lemma \ref{l:uc2}, $\pi
^{-1}$ maps equilibria into equilibria (otherwise the right-hand side of (%
\ref{r:vf}) would be zero, i.e. for $u=\pi ^{-1}(x,p)$, $du/dt$ would have a
singular zero, which is impossible by Lemma \ref{l:uc2}). If we denote by $%
\mathcal{P}_{t}$ the orbits in $\mathcal{K}$ with period $t$, $t\geq 0$
(where $\mathcal{P}_{0}$ are equilibria), we now have%
\begin{equation}
\mu \left( \bigcup_{t\geq 0}\mathcal{P}_{t}\right) =1\text{.}  \label{r:cont}
\end{equation}

Now for each $t\geq 0$, $\mathcal{P}_{t}$ is $(\varphi ,T)$-invariant, thus
by ergodicity $\mu (\cup _{t\leq t_{0}}\mathcal{P}_{t})\in \{0,1\}$. As $%
t_{0}\longmapsto \mu (\cup _{t\leq t_{0}}\mathcal{P}_{t})$ is
non-decreasing, there exists a unique $t_{0}$ so that $\mu (\mathcal{P}%
_{t_{0}})=1$. As $\mathcal{P}_{t_{0}}$ is closed, it contains the support of 
$\mu $.
\end{proof}

We finally need to show that, if $\mu \,$is supported on periodic orbits and
has non-zero speed, it is supported on a single one.

We say that the orbit of $u$ is totally ordered, if the set $\Upsilon
(u)=\{\varphi ^{t}T^{k}u,t\in \mathbb{R},k\in \mathbb{Z\}}$ is totally
ordered. That means that for any $v,w\in \Upsilon (u)$, we have either $%
v\geq w$ or $v\leq w$.

\begin{lemma}
\label{l:single} Assume $\mu $ is $(\varphi ,T)$-ergodic, $v(\mu )\not=0$,
and supported on periodic orbits with period $t_{0}>0$. Then it is supported
on a single totally ordered periodic orbit such that the $\pi \circ T^{j}$%
-image of supp$(\mu )$ is independent of $j\in \mathbb{Z}$.
\end{lemma}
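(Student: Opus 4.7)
The plan is to prove the lemma in two stages: first showing that $\operatorname{supp}(\mu)$ is a single $\varphi$-periodic orbit, and then establishing its total ordering. The $\tilde{T}^j$-invariance of $\pi(\operatorname{supp}(\mu))$ will follow automatically from the $T$-invariance of $\operatorname{supp}(\mu)$ once it is known to be a single orbit.

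For the first stage, I would use Lemma \ref{l:uc3} to write $\operatorname{supp}(\mu)$ as a union of $\varphi$-periodic orbits of common period $t_0$. For any lift $u(\cdot)\in X$ of a point $u\in\operatorname{supp}(\mu)$, the identity $\varphi^{t_0}u=u$ in $\mathcal K$ lifts to $\varphi^{t_0}u=u+q$ in $X$ with $q=v(\mu)t_0$; by Lemma \ref{l:speed} and the hypothesis $v(\mu)\neq 0$, $q$ is a nonzero integer. Hence each projected orbit $\pi(\gamma)\subset\tilde{\mathcal K}$ is a simple closed curve of winding number $q$ about $\mathbb T^1$, and by Theorem \ref{t:main1} distinct orbits in the support project to disjoint such curves. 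Two disjoint simple closed curves of common nonzero winding in $\mathbb T^1\times\mathbb R$ are vertically stacked --- their lifts to the universal cover $\mathbb R^2$ are embedded curves going to $\pm\infty$ in the first coordinate and hence separate the plane --- so the set $\Gamma$ of $\varphi$-orbits in the support inherits a linear order.

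The homeomorphism $\tilde{T}$ commutes with $\tilde{\varphi}$ and preserves winding, so it preserves this linear order, inducing an order-preserving, measure-preserving bijection $\bar{T}$ on $(\Gamma,\bar\mu)$, which is $\bar{T}$-ergodic by $(\varphi,T)$-ergodicity of $\mu$. The cumulative distribution $F(\gamma):=\bar\mu(\{\gamma'\le\gamma\})$ satisfies $F\circ\bar{T}=F$, is $\bar\mu$-a.e.\ constant by ergodicity, and being non-decreasing forces $\bar\mu$ to be a Dirac mass. Thus $\operatorname{supp}(\mu)$ is a single $\varphi$-orbit $\gamma$, necessarily satisfying $T\gamma=\gamma$, so $\pi(T^j\gamma)=\tilde{T}^j\pi(\gamma)=\pi(\gamma)$ for every $j\in\mathbb Z$.

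For the second stage, $T\gamma=\gamma$ yields $Tu(0)=u(\tau_0)+m_0$ in $X$ for some $\tau_0\in[0,t_0)$ and $m_0\in\mathbb Z$, so iterating gives $\Upsilon(u)=\{u(t)+km_0:t\in\mathbb R,\,k\in\mathbb Z\}$. Suppose for contradiction some pair is incomparable: then for suitable $\alpha\in\mathbb R$ and $r\in m_0\mathbb Z$, the function $w(t):=u(t+\alpha)-u(t)+r$ has a zero at $t=0$. In the DC case $u(\cdot)-r$ is also a solution of \eqref{r:FK}, so by Proposition \ref{p:existence} $w$ solves the linear system \eqref{r:system}, and $u(t+t_0)=u(t)+q$ makes $w$ $t_0$-periodic. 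A singular zero at $t=0$ is immediately ruled out by Lemma \ref{t:zeroset}(ii) combined with periodicity of $z_{m,n}(w(\cdot))$. The main obstacle is the regular-zero case, where zero-count monotonicity alone does not drop. My plan there is to deform $(\alpha,r)$ continuously to $(0,0)$ through the one-parameter family of $t_0$-periodic solutions $w_{\alpha',r'}$: as the family approaches the degenerate limit $w\equiv 0$, the regular zeros of $w$ must coalesce or escape to infinity, and applying Lemmas \ref{l:infinite}--\ref{l:der2} to the resulting pair $u(t_0^*),\,u(t_1^*)\in\operatorname{supp}(\mu)\subset\mathcal A$ produces a finite-degree singular zero among configurations in $\mathcal A$, contradicting Remark \ref{r:one}. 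This final coalescence step is the technically delicate part of the argument.
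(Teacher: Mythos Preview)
Your proposal has the right overall shape but contains two genuine gaps, and the paper's proof in fact proceeds in the opposite order from yours for a reason.

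\textbf{First gap (uniqueness argument).} The claim that $\tilde T$ ``commutes with $\tilde\varphi$ and preserves winding, so it preserves this linear order'' is not justified. The map $\tilde T$ is only a homeomorphism of $\tilde{\mathcal K}\subset\mathbb T^{1}\times\mathbb R$, not of the ambient cylinder; a bijection of a family of disjoint essential circles that sends circles to circles can permute them arbitrarily without respecting their vertical stacking. The paper avoids this by \emph{first} proving the non-intersection step (your second stage): once one knows that $u(0)$ and $u(t_1)$ never intersect for $0<t_1<t_0$, each projected orbit meets every fibre $\{x\}\times\mathbb R$ exactly once, so the quotient $\operatorname{supp}(\mu)/\varphi$ is parametrized by the $p$-coordinate at $x=0$. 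In that parametrization, order-preservation of $\bar T$ follows directly from Remark~\ref{r:one} (a violation would force $u_0(t)=v_0(t')$ for some $u,v$ in the support, hence a singular zero). The paper then finishes uniqueness not via a CDF argument but by noting that an order-preserving self-map of a compact subset of $\mathbb R$ has monotone bounded orbits, so a dense orbit forces $\bar B$ to be a point.

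\textbf{Second gap (regular-zero case).} Your plan to ``deform $(\alpha,r)$ to $(0,0)$ and let zeros coalesce or escape'' is not an argument: Lemmas~\ref{l:infinite}--\ref{l:der2} apply to zeros of infinite degree at a fixed time, and nothing in your deformation produces one. The paper's method is concrete and quite different. Set $w(t)=u(t)-u(0)$; then $w(t_0)\equiv 1$, so a regular zero present at $t_1$ must vanish before $t_0$. Singular zeros are forbidden by Remark~\ref{r:one}, hence the continuous zero-location $x(t)$ (defined by linear interpolation between lattice sites) can only disappear by running off to $\pm\infty$ at some $t_2<t_0$. Continuity of $t\mapsto w(t)$ in $l_\infty$ then gives $\lim_{i\to\infty}|u_i(t_2)-u_i(0)|=0$; passing to a convergent subsequence $T^{n_k}u(0)\to v\in\operatorname{supp}(\mu)$ yields $v(t_2)=v(0)$ with $0<t_2<t_0$, contradicting the period hypothesis. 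This escape-to-infinity plus compactness argument is the missing idea in your second stage.
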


\begin{proof}
We prove it in three steps. First we show that $u(t_{0})=u(0)\pm 1$, then we
show that $u(0)$ and $u(t)$ in the support of $\mu $ can not intersect, and
finally we show uniqueness of the periodic orbit.

The average non-zero speed is by Lemma \ref{l:speed} the same for all the
periodic orbits in supp$(\mu )$. By Theorem \ref{t:main1}, $\pi $ projects
each of them to a homotopically non-trivial invariant circle on $\mathbb{T}%
^{1}\times \mathbb{R}$ (otherwise the speed would be zero), thus $%
u(t_{0})=u(0)\pm 1$, with the sign the same as the sign of $v(\mu )$. Assume
without loss of generality $u(t_{0})=u(0)+1$.

Now assume $u(0)$ and $u(t_{1})$ intersect, $0<t_{1}<t_{0}$ (the only
analogous alternative is $-t_{0}<t_{1}<0$). Consider $w(t)=u(t)-u(0)$.
Recall that by Theorem \ref{t:main1} and Remark \ref{r:one}, all
intersections in the support of $\mu $ are transversal, so $w(t)$ has a
regular zero, say at $i$. Then $w(t)$ intersects zero at%
\begin{equation}
x(t)=\frac{(i+1)u_{i}-i(u_{i+1})}{u_{i}-u_{i+1}}.  \label{r:cont1}
\end{equation}

As $w(t)$ has no singular zeroes, it is easy to see we can continue the
function $t\mapsto x(t)$ locally with (\ref{r:cont1}) if $x(t)$ not an
integer, and if $x(t)=i$ an integer, with%
\begin{equation*}
x(t)=\left\{ 
\begin{array}{ll}
\frac{(i+1)u_{i}-i(u_{i+1})}{u_{i}-u_{i+1}} & u_{i}(t)\geq 0, \\ 
\frac{iu_{i-1}-(i-1)u_{i}}{u_{i-1}-u_{i}} & u_{i}(t)<0.%
\end{array}%
\right.
\end{equation*}

Thus we can find a continuous function $t\mapsto x(t)$ so that $w(t)$ has a
regular zero at $\left\lfloor x(t)\right\rfloor $. But by assumptions $%
w(t_{0})\equiv 1$, so the only possibility is that for some $t_{2}$, $%
t_{1}<t_{2}<t_{0}$, $\lim_{t\rightarrow t_{2}^{-}}x(t)=+\infty $ or $%
\lim_{t\rightarrow t_{2}^{-}}x(t)=-\infty $. Assume without loss of
generality $\lim_{t\rightarrow t_{2}^{-}}x(t)=+\infty $. As by the proof of
Lemma \ref{l:existence}, $t\mapsto w(t)$ is continuous in $l_{\infty }$, we
easily see that $\lim_{i\rightarrow \infty }w_{i}(t_{2})=0$, thus 
\begin{equation}
\lim_{i\rightarrow \infty }|u_{i}(t_{2})-u_{i}(0)|=0.  \label{r:asymmm}
\end{equation}%
By compactness, we can find a sequence $T^{n_{k}}$, $n_{k}\rightarrow \infty 
$ such that $T^{n_{k}}u(0)$ converges to some $v$. As $T$ commutes with $%
\varphi $, from (\ref{r:asymmm}) we see that $v(t_{2})=v(0)$, and $v$ is in
the support of $\mu $. But $t_{2}<t_{0}$, which is in contradiction with the
assumption.

Now we see that each periodic orbit in the $\pi $-image of support of $\mu $
intersects each $x$ coordinate exactly once at a point $p$, where $(x,p)\in 
\mathbb{T}^{1}\times \mathbb{R}$. We can thus identify the quotient set supp$%
(\mu )/\varphi $ with a compact set $\bar{B}$ of intersections at $x=0$, and
consider $\bar{B}$ to be a subset of $\mathbb{R}$ parametrized with $p$. By
definitions, $T$ induces a homeomorphism $\bar{T}$ on $\bar{B}$, and the
induced flow $\bar{\varphi}$ is constant on $\bar{B}$. The induced measure $%
\bar{\mu}$ is $(\bar{T},\bar{\varphi})$-ergodic on $\bar{B}$, and as $\bar{%
\varphi}$ is constant, it must be $\bar{T}$ ergodic. Finally, note that $%
\bar{T}$ is order preserving on $\bar{B}$, that means that $p>q$ implies $%
\bar{T}(p)>\bar{T}(q)$. We see that, as otherwise we find $u,v\,\ $in
equivalence classes of $p,q$ which violate order preserving, such that $%
u_{0}(t)=v_{0}(t^{\prime })$, and then $u(t)-v(t^{\prime })$ has a singular
zero at $0$.\ As by assumptions, $\bar{B}$ is the support of $\bar{\mu}$, $%
\bar{T}$ a homeomorphism and $\bar{\mu}$ is $\bar{T}$-ergodic, there must be 
$p_{0}\in \bar{B}$ such that its $\omega $-limit set with respect to $\bar{T}
$ is the entire $\bar{B}$. But $\bar{T}^{n}(p_{0})$ is monotone and bounded,
thus converges to a single point. We conclude that $\bar{B}$ must be a
single point, so support of $\mu $ consists of a single periodic orbit.

It must be a totally ordered set, as we have shown that $u(t),u(t^{\prime })$
do not intersect unless equal.
\end{proof}

\begin{proof}[Proof of Theorem \protect\ref{t:main4}.]
Take any $(\varphi ,T)$-ergodic measure $\mu $ on $\mathcal{X}$. If $v(\mu
)=0$, consider the function $\int V(u_{0},u_{1})d\mu $. The derivatives of $%
V $ and $u_{i}$ are by compactness uniformly bounded on the semiorbit of $%
u(t)$. Thus we can exchange the integral and derivation, use $T$-invariance
of $\mu $, (\ref{r:FK}) and finally $\varphi $-invariance of $\mu $ and
obtain%
\begin{eqnarray}
\frac{d}{dt}\int V(u_{0},u_{1})d\mu &=&\int \left(
V_{1}(u_{0},u_{1})u_{0}^{\prime }(t)+V_{2}(u_{0},u_{1})u_{1}^{\prime
}(t)\right) d\mu  \notag \\
&=&\int \left( V_{1}(u_{0},u_{1})u_{0}^{\prime
}(t)+V_{2}(u_{-1},u_{0})u_{0}^{\prime }(t)\right) d\mu  \notag \\
&=&-\int \left( u_{0}^{\prime }(t)\right) ^{2}d\mu +F\int u_{0}^{^{\prime
}}(t)d\mu  \notag \\
&=&-\int \left( u_{0}^{\prime }(t)\right) ^{2}d\mu +F\int \left(
\int_{0}^{1}u_{0}^{^{\prime }}(t)dt\right) d\mu  \notag \\
&=&-\int \left( u_{0}^{\prime }(t)\right) ^{2}d\mu +Fv(\mu )\text{.}
\label{r:vzero}
\end{eqnarray}

However $V$ is continuous on $\mathcal{X}$ and $\mu $ is $\varphi $%
-invariant, so the left-hand side of (\ref{r:vzero}) must be $0$. We deduce
that for $\mu $-a.e. $u$, $u_{0}^{\prime }(t)=0$, so by continuity argument
and $T$-invariance we see that the support of $\mu $ consists of equilibria.

If $v(\mu )\not=0$, the claim follows from lemmas \ref{l:uc3} and \ref%
{l:single}.
\end{proof}

\begin{remark}
The relation 
\begin{equation*}
F\cdot v(\mu )=\int \left( u_{0}^{\prime }(t)\right) ^{2}d\mu
\end{equation*}%
obtained above, thus valid for DC dynamics, has standard physical
interpretations, such as force$\times $speed=energy dissipation, or voltage$%
\times $current=resistance.
\end{remark}

\section{Uniformly sliding states in the DC\ case \label{s:sliding}}

In this section we further discuss the DC\ case when the average speed is
not zero. We show that the periodic orbit is then a uniformly sliding state,
and discuss the properties of its "modulation function". The results of this
section have already been obtained by Baesens, MacKay and Qin (\cite%
{Baesens98}, \cite{Baesens05}, \cite{Qin10}, \cite{Qin11}), by combination
of a limiting procedure (typically going from rational to irrational
rotation numbers) and a fixed point argument.

We show that these results follow directly from our ergodic-theoretical and
topological considerations, hoping for extension of these results to AC\
driving and systems similar to (\ref{r:FK}).

We say that a solution of (\ref{r:FK})\ is a uniformly sliding solution with
the rotation number $\rho $, speed $v$ and time shift $\alpha $, if there
exists a modulation function $m:\mathbb{R}\rightarrow \mathbb{R}$, $%
m(x+1)=m(x)$, such that for each $j\in \mathbb{Z}$, $t\in \mathbb{R}$,%
\begin{equation*}
u_{j}(t)=j\rho +vt+\alpha +m(j\rho +vt+\alpha ).
\end{equation*}

\begin{corollary}
\label{c:c}Assume that $F$ is DC, $\mu $ is $(\varphi ,T)$-ergodic and $%
v(\mu )\not=0$. Then $\mu ~$is supported on a single uniformly sliding
solution.
\end{corollary}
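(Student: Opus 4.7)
The plan is to unpack the structure provided by Theorem \ref{t:main4} and Lemma \ref{l:single} into the explicit uniformly sliding form. By those results, the support of $\mu$ is a single totally ordered $\varphi$-periodic orbit $\mathcal{O} = \{u(s) : s \in \mathbb{R}\}$ of some minimal period $t_{0}>0$ with $u(s+t_{0}) = u(s)+1$ (passing WLOG to $v(\mu)>0$; otherwise reverse orientation), and Lemma \ref{l:single} further says that $\pi\circ T^{j}(\mbox{supp}(\mu))$ is independent of $j$, which combined with injectivity of $\pi$ from Theorem \ref{t:main1} forces $T(\mathcal{O})=\mathcal{O}$. Set $v:=1/t_{0}$; this will be the sliding speed.

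The first substantive step is to show that $s\mapsto u(s)$ is strictly increasing in the pointwise partial order on $X$. The orbit in the quotient $\mathcal{X}$ is a continuous injective image of the circle $\mathbb{R}/t_{0}\mathbb{Z}$, hence a topological circle; its lift $\mathcal{O}\subset X$ is therefore homeomorphic to $\mathbb{R}$, so $s\mapsto u(s):\mathbb{R}\to \mathcal{O}$ is a continuous bijection onto a $1$-manifold and thus a homeomorphism. Since $\mathcal{O}$ is totally ordered and $u(t_{0})=u(0)+1>u(0)$ pointwise, this homeomorphism is strictly increasing.

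Next, since $T$ preserves $\mathcal{O}$ there is a unique $\tau\in\mathbb{R}$ (mod $t_{0}$) with $Tu(0)=u(\tau)$, and because $T$ commutes with $\varphi$ one gets $Tu(s)=u(s+\tau)$ for all $s$; written in coordinates, $(Tu)_{i}=u_{i-1}$ yields the key identity $u_{j}(t)=u_{0}(t-j\tau)$. Define $\Psi(s):=u_{0}(s)$; by Step 1 it is continuous and strictly increasing, and it satisfies $\Psi(s+t_{0})=\Psi(s)+1$. The function $m(x):=\Psi(x/v)-x$ is therefore continuous and $1$-periodic, and substituting back gives
$$u_{j}(t) = \Psi(t-j\tau) = v(t-j\tau) + m\bigl(v(t-j\tau)\bigr) = j\rho + vt + m(j\rho + vt)$$
with $\rho:=-v\tau$. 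Letting $j\to\infty$ in $u_{j}(0)/j$ confirms that $\rho$ is indeed the rotation number of $u$, so this is a uniformly sliding solution in the required sense (with time shift $\alpha=0$ for this particular representative; other elements of the orbit realize other values of $\alpha$).

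The main obstacle is the topological monotonicity in the first substantive step: one must know that the lifted orbit really is a simple arc, so that the continuous injection $s\mapsto u(s)$ is automatically order-preserving. This relies on total orderedness (which rules out self-crossings) together with the non-intersection observation already packaged in Remark \ref{r:one} and the proof of Lemma \ref{l:single}. Once that is in place, the identification of $T$ with a time translation on $\mathcal{O}$ and the subsequent decomposition of $\Psi$ are direct.
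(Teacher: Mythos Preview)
Your argument is correct and follows the same overall architecture as the paper's proof: both start from Lemma~\ref{l:single} (single totally ordered periodic orbit with $u(t_{0})=u(0)+1$), observe that the spatial shift $T$ acts on that orbit as a time translation, and then read off the modulation function as $\Psi-\text{(linear)}$ where $\Psi(s)=u_{0}(s)$.

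The route diverges in one place, and yours is the cleaner one. The paper first writes $u_{j}(t)=\widetilde m(t+\alpha_{j})$ with a~priori unknown phase shifts $\alpha_{j}$, then invokes rotational orderedness to get a rotation number $\rho$, and finally computes $\int_{0}^{t_{0}}(u_{i+1}-u_{i})\,dt=\rho/v$ to conclude $v\alpha_{j}=j\rho+\alpha$. You bypass that computation entirely: since $T(\mathcal O)=\mathcal O$ and $T$ commutes with $\varphi$, iterating $Tu(s)=u(s+\tau)$ immediately gives $u_{j}(t)=u_{0}(t-j\tau)$, so the linearity of the phase shifts is automatic and $\rho=-v\tau$ drops out. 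Your verification that this $\rho$ is the rotation number via $u_{j}(0)/j\to\rho$ is legitimate because $m$ is bounded.

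Two minor remarks. First, the monotonicity step you flag as ``the main obstacle'' is not actually needed: the representation $u_{j}(t)=v(t-j\tau)+m(v(t-j\tau))$ and the $1$-periodicity of $m$ follow from $\Psi(s+t_{0})=\Psi(s)+1$ alone, with no appeal to strict increase of $\Psi$. Second, your justification of that monotonicity (homeomorphism onto a totally ordered $1$-manifold $\Rightarrow$ order-preserving) is a little loose as stated, since it tacitly assumes the subspace topology on $\mathcal O$ coincides with the order topology; the quickest clean argument is that $u_{0}(s_{1})=u_{0}(s_{2})$ together with comparability of $u(s_{1}),u(s_{2})$ would force a singular zero of $u(s_{1})-u(s_{2})$, contradicting Remark~\ref{r:one}, so $\Psi=u_{0}$ is injective and hence strictly monotone. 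Since the step is dispensable anyway, this does not affect the validity of your proof.
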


\begin{proof}
Assume without loss of generality $v(\mu )>0$. As by Lemma \ref{l:single}
and Theorems \ref{t:main1}, \ref{t:main2}, $\pi $ projects $u_{j}(t)$ to a
single periodic orbit with period $t_{0}$ independently of $j$ and with the
same projected flow $\tilde{\varphi}$, there exists a function $\widetilde{m}%
:\mathbb{R}\rightarrow \mathbb{R}$ and $\alpha _{j}\in \mathbb{R}$ such that 
$\widetilde{m}(t+t_{0})=\widetilde{m}(t)+1$ and for all $j\in Z$, $u_{j}(t)=%
\widetilde{m}(t+\alpha _{j})$. It is easy to see that $v=v(\mu )=1/t_{0}$.
We set $m(t):=\widetilde{m}(t\cdot t_{0})-t$, and thus $m(t+1)=m(t)$ and%
\begin{equation}
u_{j}(t)=m(vt+v\alpha _{j})+vt+v\alpha _{j}\text{.}  \label{r:modul}
\end{equation}

As the set $\{u(t),t\in \mathbb{R}\}$ is by Theorem \ref{t:main4} totally
ordered, $u(t)$ is rotationally ordered, so by\ Lemma \ref{l:roc}, (\ref%
{r:width}) must hold with some $\rho =\rho (u(t))$. As $t\longmapsto
u(t)-u(0)$ is continuous in $l_{\infty }(\mathbb{Z})$, all $u(t)$ must have
the same rotation number equal to the rotation number of the measure defined
as%
\begin{eqnarray*}
\rho (\mu ) &:&=\int (u_{1}-u_{0})d\mu \\
&=&v\int \left( \int_{0}^{t_{0}}(u_{i+1}(t)-u_{i}(t))dt\right) d\mu ,
\end{eqnarray*}%
where in the second row we used $T$ and $\varphi $-invariance of $\mu $. By
Lemma \ref{l:single}, $\int_{0}^{t_{0}}(u_{i+1}(t)-u_{i}(t))$ is independent
of $i$ as it is the second coordinate of the projection $\pi $ integrated
over one period, and by periodicity independent of $u\in \sup $p$(\mu )$,
thus for all $i\in \mathbb{Z}$, $t\in \mathbb{R}$,%
\begin{equation}
\int_{0}^{t_{0}}(u_{i+1}(t)-u_{i}(t))dt=\frac{\rho }{v}\text{.}
\label{r:modul2}
\end{equation}

Combining (\ref{r:modul}) and (\ref{r:modul2})\ we get $v\alpha _{j}=j\rho
+\alpha $ for some $\alpha \in \mathbb{R}$, which completes the proof.
\end{proof}

From Theorems \ref{t:main3}, \ref{t:main4} now follow the results from \cite%
{Baesens98}, \cite{Qin10} that if there is no equilibrium with a given
rotation number $\rho \in \mathbb{R}$, there must be an uniformly sliding
solution with the same rotation number. Furthermore, one can prove
continuous interdependence of $v,\rho ,F$ and other parameters of (\ref{r:FK}%
) by considering continuity of $(\varphi ,T)$-invariant measures and
Theorems \ref{t:main3}, \ref{t:main4}.

\section{The spatio-temporal attractor \label{s:attractor}}

In this section we prove Theorem \ref{t:main5}. We need to show that, given
any $T$-invariant probability measure $\mu $ on $\mathcal{X}$ and any open
set neighbourhood $U$ of $\mathcal{X}$, (\ref{d:attractor}) holds. As $%
\mathcal{K}_{n}$ are closed and $T$-invariant, by the ergodic decomposition
theorem it is sufficient to prove that for $\mu \in \mathcal{M}(\mathcal{K}%
_{n})$ (denoted again with $\mathcal{M}(\mathcal{K})$).

Recall the definition of $P_{\mu ,S}$ in Section 2. It is easy to see that
the left-hand side of (\ref{d:attractor}) can be written as%
\begin{equation}
P_{\mu ,S}(\{(u,t),\text{ }\varphi ^{t}(u)\in A\})=\frac{1}{S}%
\int_{0}^{T}(\varphi _{\ast }^{t}\mu (A))dt.  \label{r:mama}
\end{equation}%
Denote the probability measure on the right-hand side of (\ref{r:mama}) with 
$\nu ^{S}$. It is easy to see that $\nu ^{S}\in \mathcal{M}(\mathcal{K})$.
We want to show that, given any open neighbourhood $U$ of $\mathcal{A}$,
then $\lim_{S\rightarrow \infty }\nu ^{S}(U)=1$. Assume the contrary, and
find a sequence $S_{k}\rightarrow \infty $ and $\delta >0$ so that $\nu
^{S_{k}}(U)<1-\delta $ for all $k$. As $\mathcal{M}(\mathcal{K})$ is
compact, $\nu ^{S_{k}}$ has a subsequence for simplicity denoted also with $%
\nu ^{S_{k}}$, which converges to some $\mu \in \mathcal{M}(\mathcal{K})$.
It is easy to deduce from (\ref{r:mama}) that $\mu $ is also $\varphi $%
-invariant. Then $\mu $ must be supported on $\mathcal{A}$, thus $\mu (%
\mathcal{A)=}1$, $\mu (U\mathcal{)}=1$ and as $U$ open, $\lim_{k\rightarrow
\infty }$ $\nu ^{S_{k}}(U)=1$ which is a contradiction.

\section{Appendix 1:\ on the zero-set of a system of infinitely many
cooperative equations}

\subsection{The proof of Lemma \protect\ref{t:zeroset}.}

Recall the setting and definitions from Section \ref{s:zeroset}, and assume
in the following all the assumptions of Lemma \ref{t:zeroset}.

\begin{lemma}
\label{l:one}Assume $w_{1}(0),...,w_{k}(0)=0$ is a singular zero of degree $%
k $. Then there exist real numbers $d_{1},...,d_{k}$ such that for all $%
j=1,...,k$,%
\begin{equation}
w_{j}(t)=d_{j}t^{j^{\ast }}+o(t^{j^{\ast }})\text{,}  \label{r:order}
\end{equation}%
where $j^{\ast }=\min \{j,k+1-j\}$.
\end{lemma}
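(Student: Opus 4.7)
The proof rests on the integral form of (\ref{r:system}): using $w_j(0)=0$ for $j=1,\ldots,k$,
\begin{equation*}
w_j(t) = \int_0^t \bigl[ a_j(s) w_{j-1}(s) + b_j(s) w_{j+1}(s) + c_j(s) w_j(s) \bigr] ds.
\end{equation*}
By maximality of the zero-block, $w_0(0) \neq 0$ and $w_{k+1}(0) \neq 0$. The structural fact that drives the whole argument is that $j^*$ differs from $(j\pm 1)^*$ by at most $1$, and for at least one choice of sign one has $(j \pm 1)^* = j^* - 1$ (the neighbor that is closer to the boundary of the block). Thus the ``information'' that $w_0, w_{k+1}$ are nonzero propagates inward by exactly one index per integration.

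The plan is a two-stage induction. First I would establish the rough bound $w_j(t) = O(t^{j^*})$ by induction on $\ell \ge 0$ on the statement: $w_j(t) = O(t^{\min(j^*, \ell+1)})$ for every $j \in \{1,\ldots,k\}$. The base case $\ell=0$ follows because $w_j$ is $C^1$ with $w_j(0)=0$. For the inductive step, if $j^* \ge \ell+2$ then all of $(j-1)^*, (j+1)^*, j^* \ge \ell+1$, so by the hypothesis the three integrands are $O(s^{\ell+1})$, and integrating gives $w_j(t) = O(t^{\ell+2})$. Iterating $\ell$ up to $\max_j j^*$ yields the claimed bound.

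Second, I would refine this to the leading-order expansion by induction on $\ell = j^*$. For the base $\ell=1$ (i.e., $j=1$ or $j=k$), direct integration gives, using continuity of $a_1, b_k$ and of $w_0, w_{k+1}$,
\begin{equation*}
w_1(t) = a_1(0) w_0(0)\, t + o(t), \qquad w_k(t) = b_k(0) w_{k+1}(0)\, t + o(t),
\end{equation*}
the remaining two terms in each integral being $o(t)$ since they integrate continuous functions vanishing at $s=0$. For the inductive step, fix $j$ with $j^* = \ell+1$, select the neighbor $j' \in \{j-1, j+1\}$ for which $(j')^* = \ell$, and invoke the inductive expansion $w_{j'}(s) = d_{j'} s^\ell + o(s^\ell)$. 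Integrating against the continuous coefficient produces a term $\tfrac{(\text{coeff})(0)\, d_{j'}}{\ell+1}\, t^{\ell+1} + o(t^{\ell+1})$; if both neighbors happen to satisfy $(j\pm 1)^* = \ell$ (which occurs at the midpoint when $k$ is odd), one simply adds both contributions. All other terms are $o(t^{\ell+1})$ by the rough bound from stage one. Collecting these terms defines $d_j$.

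The main technical subtlety (rather than an obstacle) is the low regularity of the coefficients: since $a_j, b_j, c_j$ are only continuous, one cannot iterate Taylor expansions by differentiating the ODE. The integral formulation bypasses this, since the elementary identity $\int_0^t f(s) s^\ell \, ds = \tfrac{f(0)}{\ell+1} t^{\ell+1} + o(t^{\ell+1})$ holds for any $f$ continuous at $0$, and this is the only regularity needed for the bootstrap to run.
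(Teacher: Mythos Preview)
Your proof is correct and follows essentially the same inward-propagation idea as the paper's: both arguments induct on the depth $\ell$ into the zero block, use the integral/Taylor form of (\ref{r:system}) together with continuity of the coefficients, and pick up one extra power of $t$ per step. The only packaging difference is that the paper runs a single induction proving $w_j(t)=d_{j,l}t^{l}+o(t^{l})$ for all $l\le j\le k+1-l$ (with $d_{j,l}=0$ when $l<j^{\ast}$, which is exactly your stage-one bound), whereas you separate the $O(t^{j^{\ast}})$ bound from the identification of the leading coefficient; the content is the same.
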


\begin{proof}
We prove inductively in $l$, $1\leq l\leq (k+1)/2$, that for all $j$, $l\leq
j\leq k+1-l$,%
\begin{equation}
w_{j}(t)=d_{j,l}\cdot t^{l}+o(t^{l})\text{.}  \label{r:der2}
\end{equation}

For $l=1$ and all $1\leq j\leq k$, as $w_{j}$ is $C^{1}$ and $w_{j}(0)=0$,
by the Taylor formula, 
\begin{equation*}
w_{j}(t)=\frac{dw_{j}(0)}{dt}t+o(t)\text{,}
\end{equation*}%
thus 
\begin{equation}
d_{j,1}=dw_{j}(0)/dt.  \label{r:basis}
\end{equation}

We now assume (\ref{r:der2}) holds for some $l-1$. Inserting (\ref{r:der2})
in the right-hand side of (\ref{r:system}) and using (\ref{r:conditions})
(continuity of $a_{j},b_{j},c_{j}$) we get%
\begin{eqnarray}
\frac{d}{dt}w_{j}(t)
&=&(a_{j}(t)d_{j-1,l-1}+b_{j}(t)d_{j+1,l-1}+c_{j}(t)d_{j,l-1})\cdot
t^{l-1}+o(t^{l-1})  \notag \\
&=&(a_{j}(0)d_{j-1,l-1}+b_{j}(0)d_{j+1,l-1}+c_{j}(0)d_{j,l-1})t^{l-1}+o(1)t^{l-1}+o(t^{l-1})
\notag \\
&=&(a_{j}(0)d_{j-1,l-1}+b_{j}(0)d_{j+1,l-1}+c_{j}(0)d_{j,l-1})t^{l-1}+o(t^{l-1})%
\text{.}  \label{r:recurrence}
\end{eqnarray}

Integrating it and using $w_{j}(0)=0$ we obtain (\ref{r:der2}) for $l$ and
complete the inductive step with%
\begin{equation}
d_{j,l}=\frac{1}{l}%
(a_{j}(0)d_{j-1,l-1}+b_{j}(0)d_{j+1,l-1}+c_{j}(0)d_{j,l-1})\text{.}
\label{r:iteration}
\end{equation}
\end{proof}

We now need to show that $d_{j}\not=0$ and determine their sign. Denote by
sgn$(x)\in \{-1,0,1\}$ the sign of $x\in \mathbb{R}$.

\begin{lemma}
\label{l:two}Assume all as in Lemma \ref{l:one}. Then for all $j=1,...,k$,
except in the case $j=k/2$, $k$ even, the zero of Type II, we have%
\begin{equation}
\text{sgn}(d_{j})=\text{sgn}(w_{b}(0))\text{,}  \label{r:sign}
\end{equation}%
where $b\in \{0,k+1\}$ such that $|j-b|\leq |j-(k+1-b)|$.
\end{lemma}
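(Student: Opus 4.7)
The plan is to propagate sign information inward from the two boundary positions $j=0$ and $j=k+1$ using the recursion (\ref{r:iteration}) from the proof of Lemma \ref{l:one}, namely
\begin{equation*}
l\cdot d_{j,l} \;=\; a_j(0)\,d_{j-1,l-1}+b_j(0)\,d_{j+1,l-1}+c_j(0)\,d_{j,l-1},
\end{equation*}
with $d_{j,0}=w_j(0)$ equal to $0$ for $1\leq j\leq k$ and nonzero only at $j\in\{0,k+1\}$. Since $a_j(0),b_j(0)\geq\delta>0$ by (\ref{r:conditions}), signs should be transmitted from the nearer boundary undistorted, modulo the behaviour at the "meeting point" in the middle.

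First I would prove by induction on $l$ that $d_{j,l}=0$ whenever $1\leq j\leq k$ and $l<j^{\ast}$: the left signal needs exactly $j$ steps to reach position $j$ and the right one $k+1-j$ steps, so at earlier levels all three terms on the right of the recursion vanish. Next, for $j$ strictly closer to $0$ (so $j^{\ast}=j<k+1-j$), the level $l=j$ has $d_{j+1,l-1}=0$ and $d_{j,l-1}=0$ by the previous step, so the recursion collapses to $d_{j,j}=\tfrac{1}{j}a_j(0)d_{j-1,j-1}$. Iterating this identity yields
\begin{equation*}
d_j \;=\; \frac{1}{j!}\,a_j(0)\,a_{j-1}(0)\cdots a_1(0)\,w_0(0),
\end{equation*}
hence $\mathrm{sgn}(d_j)=\mathrm{sgn}(w_0(0))$ with $b=0$. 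The symmetric computation for $j$ strictly closer to $k+1$ gives $d_j=\tfrac{1}{(k+1-j)!}b_j(0)b_{j+1}(0)\cdots b_k(0)\,w_{k+1}(0)$ and $\mathrm{sgn}(d_j)=\mathrm{sgn}(w_{k+1}(0))$ with $b=k+1$.

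The delicate case is the equidistant one, where both propagations arrive at the same level. For $k$ odd and $j=(k+1)/2$, at level $l=j$ both $d_{j-1,j-1}$ and $d_{j+1,j-1}$ are already the leading nonzero terms from the two sides, so
\begin{equation*}
d_j \;=\; \frac{1}{j!}\Bigl[a_j(0)\cdots a_1(0)\,w_0(0) + b_j(0)\cdots b_k(0)\,w_{k+1}(0)\Bigr].
\end{equation*}
In Type II the two boundary signs agree, so the two contributions add constructively and $\mathrm{sgn}(d_j)$ equals this common sign, consistent with either admissible choice $b\in\{0,k+1\}$. In Type I the contributions oppose each other and whichever side dominates in magnitude determines $\mathrm{sgn}(d_j)$; this still matches one of the two admissible $b$'s, since both satisfy the equality $|j-b|=|j-(k+1-b)|$.

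The main obstacle will be to justify the excluded case "$k$ even, $j=k/2$, Type II" in the way the paper intends. Under the naive analysis above this position is strictly closer to the left boundary, so $\mathrm{sgn}(d_j)=\mathrm{sgn}(w_0(0))$ already by the "closer boundary" step, and by Type II this automatically agrees with $\mathrm{sgn}(w_{k+1}(0))$, so the conclusion appears to hold without exception. I suspect the real issue is hidden in the $c_j(0)d_{j,l-1}$ terms, which first become relevant at precisely this level when $k$ is even and $j=k/2$, and could in principle cause an algebraic cancellation of the leading coefficient that forces one to pass to a higher order in $t$. I would attempt to construct a concrete example that exhibits such cancellation; if none exists, I would argue that the naive propagation already establishes the desired sign identity and the stated exception is vacuously absorbed.
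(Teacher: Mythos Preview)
Your approach is exactly the paper's: propagate the recursion (\ref{r:iteration}) from the two boundaries and read off the sign of the first nonvanishing coefficient. Your formulas for $d_j$ when $j$ is strictly closer to one boundary coincide with the paper's, and your identification of the equidistant case $j=(k+1)/2$ for $k$ odd as the only place where both signals arrive simultaneously is correct.

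Your puzzlement about the stated exception is entirely justified. For $k$ even and $j=k/2$ one has $j^\ast=k/2$, but the right-hand signal needs $k+1-j=k/2+1$ steps to reach position $j$, so at level $j^\ast$ only the left boundary contributes and $d_{k/2}$ is a positive multiple of $w_0(0)$; the $c_j$-term you worry about is also absent since $d_{k/2,k/2-1}=0$. Thus for $k$ even the conclusion holds at $j=k/2$ in both Types, and the exception as written is vacuous. The paper's own displayed formula for the ``$j=k/2$'' case, with both boundary terms present, is in fact the formula valid at the genuinely equidistant position $j=(k+1)/2$ with $k$ odd; the exception in the statement appears to be a misprint for that case.

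There is one small gap in your treatment of that equidistant position. In Type~I with $k$ odd the two contributions have opposite signs, and you say ``whichever side dominates'' determines $\mathrm{sgn}(d_j)$; but nothing prevents exact cancellation, giving $d_{(k+1)/2}=0$, in which case $\mathrm{sgn}(d_j)$ matches neither boundary. This (Type~I, $k$ odd, $j=(k+1)/2$) is the case that genuinely needs to be excluded. It causes no trouble downstream: for the zero-counting table that follows in the paper, the sign pattern on either side of the midpoint already forces exactly one sign change in Type~I for $t>0$, irrespective of the midpoint value.
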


\begin{proof}
From (\ref{r:basis}) and (\ref{r:system})\ we get 
\begin{eqnarray*}
d_{1,1} &=&dw_{1}(0)/dt=a_{1}(0)w_{0}(0), \\
d_{k,1} &=&dw_{k}(0)/dt=b_{k}(0)w_{k+1}(0), \\
d_{j,1} &=&0\text{ for }j=2,...,k-1\text{.}
\end{eqnarray*}

Iteratively from\ (\ref{r:iteration}) we easily get using again $j^{\ast
}=\min \{j,k+1-j\}$%
\begin{equation*}
\begin{array}{ll}
d_{j}=w_{0}(0)\prod_{i=1}^{j}a_{i}(0)/j^{\ast }! & \text{for }j<k/2\text{,}
\\ 
d_{j}=w_{k+1}(0)\prod_{i=j}^{k}b_{i}(0)/j^{\ast }! & \text{for }j>k/2, \\ 
d_{j}=w_{0}(0)\prod_{i=1}^{j}a_{i}(0)/j^{\ast
}!+w_{k+1}(0)\prod_{i=j}^{k}b_{i}(0)/j^{\ast }! & \text{for }j=k/2\text{,}%
\end{array}%
\text{ }
\end{equation*}%
which completes the proof.
\end{proof}

Relations (\ref{r:order}) and (\ref{r:sign}) now complete the description of
the behavior of a solution around a singular zero of degree $k$ for small $t$%
. We summarize it, again assuming without loss of generality (i.e. up to
reparametrization) that $(1,0),...,(k,0)\in S$ is a zero of degree $k$ at $%
t=0$. In that case, lemmas \ref{l:one} and \ref{l:two} imply that, assuming
all as in Lemma \ref{l:one}, that there exists $\delta _{0}>0$ such that for
all $0<\delta \leq \delta _{0}$, we have (see Figure 1):%
\begin{equation*}
\begin{array}{ccccc}
\text{Type of zero} & k & z_{0,k+1}(w(-\delta )) & z_{0,k+1}(w(0)) & 
z_{0,k+1}(w(\delta )) \\ 
\text{I} & \text{even} & k+1 & k & 1 \\ 
\text{I} & \text{odd} & k & k & 1 \\ 
\text{II} & \text{even} & k & k & 0 \\ 
\text{II} & \text{odd} & k+1 & k & 0%
\end{array}%
\end{equation*}

\FRAME{dtbpFUX}{4.2514in}{2.0686in}{0pt}{\Qcb{\textit{Figure 1:\ Types of
singular zeroes of finite degree}}}{\Qlb{f:two}}{picture5.gif}{\special%
{language "Scientific Word";type "GRAPHIC";maintain-aspect-ratio
TRUE;display "USEDEF";valid_file "F";width 4.2514in;height 2.0686in;depth
0pt;original-width 5.2789in;original-height 2.5516in;cropleft "0";croptop
"1";cropright "1";cropbottom "0";filename 'Picture5.gif';file-properties
"XNPEU";}}

We conclude that the number of zeroes is strictly decreasing at singular
zeroes. By continuity, the number of zeroes does not change at regular
zeroes, which completes the proof of Lemma \ref{t:zeroset}.

\subsection{Zeroes of degree $\infty $.}

\textbf{A}s noted earlier, zeroes of degree $\infty $ require special
attention.

\begin{lemma}
\label{l:infitiy}Assume all as in Lemma \ref{t:zeroset}. Suppose $w(0)$ has
a zero of degree $\infty $, $w_{0}(0)\not=0$ and $w_{i}(0)=0$ for all $i\geq
1$. Then there exist $d_{j}$ having the same sign as $w_{0}(0)\,\ $such that
for all $j\geq 1$, 
\begin{equation}
w_{j}(t)=d_{j}t^{j}+o(t^{j})\text{.}  \label{r:auto}
\end{equation}
\end{lemma}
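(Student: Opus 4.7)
The plan is to imitate the proof of Lemma \ref{l:one}, but with an induction that exploits the fact that the zero extends indefinitely to the right. Specifically, I would prove by induction on $l \geq 1$ that for every $j \geq l$ there exist constants $d_{j,l}$ with
\[
w_j(t) = d_{j,l} t^l + o(t^l) \quad (t \to 0),
\]
where the $d_{j,l}$ vanish for $j > l$ and are nonzero (with a controlled sign) for $j = l$. The desired $d_j$ will then be $d_{j,j}$.

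For the base case $l = 1$, since $w_i(0) = 0$ for all $i \geq 1$, equation (\ref{r:system}) evaluated at $t=0$ gives $\dot w_1(0) = a_1(0)w_0(0)$ and $\dot w_j(0) = 0$ for $j \geq 2$; so $d_{1,1} = a_1(0)w_0(0)$ and $d_{j,1} = 0$ for $j \geq 2$. For the inductive step, assuming the expansion holds at level $l-1$ for every $j \geq l-1$, I would plug this into (\ref{r:system}) exactly as in (\ref{r:recurrence}) (using continuity of $a_j,b_j,c_j$ to absorb the $t$-dependence of the coefficients into the $o(t^{l-1})$ error), integrate from $0$ to $t$ using $w_j(0)=0$, and obtain the recurrence
\[
d_{j,l} = \frac{1}{l}\bigl[a_j(0) d_{j-1,l-1} + b_j(0) d_{j+1,l-1} + c_j(0) d_{j,l-1}\bigr], \qquad j \geq l.
\]
Since $d_{i,l-1} = 0$ whenever $i > l-1$, this recurrence collapses to $d_{j,l} = 0$ for $j > l$ and $d_{l,l} = \tfrac{1}{l} a_l(0)\, d_{l-1,l-1}$ for $j = l$. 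Unwinding gives
\[
d_j \;=\; d_{j,j} \;=\; \frac{w_0(0)}{j!}\prod_{i=1}^{j} a_i(0),
\]
and since $a_i(0) \geq \delta > 0$ by (\ref{r:conditions}), we have $\mathrm{sgn}(d_j) = \mathrm{sgn}(w_0(0))$ as required.

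The only subtle point is making sure the induction is set up so that at level $l$ the expansion is only claimed for indices $j \geq l$; this is what allows us to substitute the hypothesis for all three of $w_{j-1}, w_j, w_{j+1}$ into the ODE, the key being that $w_{j+1}$ always has at least as much vanishing as $w_{j-1}$ (unlike the finite-degree case, where one also had to track the right boundary $w_{k+1}(0) \neq 0$). There is no competing contribution from the right, which is why the sign of every $d_j$ is simply $\mathrm{sgn}(w_0(0))$ and why the exponent is $j$ rather than $\min\{j, k+1-j\}$ as in Lemma \ref{l:one}. The main (very mild) obstacle is bookkeeping the range of indices at each inductive level; the pointwise-in-$j$ Taylor calculations themselves are routine.
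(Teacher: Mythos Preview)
Your proposal is correct and follows exactly the approach the paper intends: the paper's proof simply says ``an analogous analysis as in the proofs of Lemmas \ref{l:one} and \ref{l:two}'' yields $d_j = \frac{w_0(0)}{j!}\prod_{i=1}^j a_i(0)$, and you have accurately reconstructed what that analogous analysis is, including the correct restriction of the inductive claim to indices $j \geq l$ (which replaces the two-sided range $l \leq j \leq k+1-l$ from the finite-degree case).
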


\begin{proof}
An analogous analysis as in the proofs of Lemmas \ref{l:one} and \ref{l:two}
easily yields (\ref{r:auto})\ with%
\begin{equation*}
d_{j}=\frac{w_{0}(0)}{j!}\prod_{i=1}^{j}a_{i}(0).
\end{equation*}%
\bigskip
\end{proof}

\subsection{Proof of Lemma \protect\ref{t:zbe}.}

We construct explicitly the functions $c_{i}(s,t),d_{i}(s,t)$. We first
define sets of zeroes $S^{+}$, $S^{-}$ which are subsets of the set of
singular zeroes $S\subset \mathbb{Z}\times \lbrack T_{0},T_{1}]$ which
"disappear" either at $t=t_{0}\,$or at $t=t_{0}^{+}$. Assume without loss of
generality that $(1,t_{0}),...,(k,t_{0})$ is a singular zero of degree $k$.
Then we define:

(i) Zero of Type I, $k$ even, zero at $i=1,...,2l=k$: 
\begin{eqnarray*}
S^{-} &\ni &(1,t_{0}), \\
S^{+} &\ni &(1,t_{0}),(2,t_{0}),..,(l-1,t_{0}),(l+1,t_{0}),...,(2l,t_{0}).
\end{eqnarray*}

(ii)\ Zero of Type I, $k$ odd, zero at $i=1,...,2l+1=k$: 
\begin{equation*}
S^{+}\ni (1,t_{0}),(2,t_{0}),..,(l,t_{0}),(l+2,t_{0}),...,(2l+1,t_{0}).
\end{equation*}

(iii)\ Zero of Type II, $k$ even, zero at $i=1,...,2l=k$:\ 
\begin{equation*}
S^{+}\ni (1,t_{0}),...,(2l,t_{0}).
\end{equation*}

(iv)\ Zero of Type II, $k$ odd, zero at $i=1,...,2l+1=k$:%
\begin{eqnarray*}
S^{-} &\ni &(l+1,t_{0}), \\
S^{+} &\ni &(1,t_{0}),...,(2l+1,t_{0}).
\end{eqnarray*}

(v) Zero of degree $\infty $ at $i\geq 1$:%
\begin{equation*}
S^{+}\ni (1,t_{0}),(2,t_{0}),...
\end{equation*}

Also $S^{-},S^{+}$ contain no other elements apart from those defined above.
We now define%
\begin{equation*}
d_{i}(s,t)=|S^{+}\cap \{i\}\times \lbrack s,t)|+|S^{-}\cap \{i\}\times
(s,t]|,
\end{equation*}%
where $|A|$ is the cardinal number of a set. The rationale for this is clear
from\ Figure 1 and (\ref{r:order}), (\ref{r:auto}).

Without loss of generality we construct $c_{0}(0,S)$ for some $S>0$. For
each $t\in \lbrack 0,S]$ we choose an open interval $I_{t}\subset \mathbb{R}$
containing $t$ in the following way:

(i)\ If $w_{0}(t)\not=0$, then $I_{t}$ is such that for all $s\in I_{t}$, $%
w_{0}(s)\not=0$.

(ii) If $w_{0}(t)=0$, a regular zero, then $I_{t}$ is such that for all $%
s\in I_{t}$, $w_{i-1}(s)\not=0$, $w_{i+1}(s)\not=0$.

(iii) If $w_{0}(t)=0$ is a singular zero of Type I, degree odd, zero at $%
-l,-l+1,...,l$ (i.e centered at $0$), we choose $I_{t}$ so that for all $%
s\in I_{t}$, $s\not=t$, we have $u_{j}(s)\not=0$ for $j=-l,...,-1,1,...,l$.

(iv)\ If If $w_{0}(t)=0$ is a singular zero in all other cases, $I_{t}$ is
such that for all $s\in I_{t}$, $s\not=t$, we have $w_{0}(s)\not=0$.

We constructed the neighborhoods $I_{t}$ in such a way that for each time $%
s\in I_{t}$, there is at most one zero which may be crossing $i=0$ (the
singular zero cases follow from all the discussion above in this section.)

We first define $c(s,s^{\prime })$ for $w(s),w(s^{\prime }),s<s^{\prime }$
in the same interval $I_{t}$, by cases (i)-(iv) above:%
\begin{equation*}
\begin{array}{ll}
\text{Case (i)} & c_{0}(s,s^{\prime })=0, \\ 
\text{Case (ii)} & c_{0}(s,s^{\prime })=z_{0}(w(s^{\prime }))-z_{0}(w(s)),
\\ 
\text{Case (iii) } & c_{0}(s,s^{\prime })=\left\{ 
\begin{array}{ll}
z_{0,l}(w(s^{\prime }))+l-z_{0,l}(w(s)) & s^{\prime }>t\geq s \\ 
0 & \text{otherwise,}%
\end{array}%
\right.  \\ 
\text{Case (iv)} & c_{0}(s,s^{\prime })=0\text{.}%
\end{array}%
\end{equation*}%
Clearly $c_{0}(s,s^{\prime })\in \{-1,0,1\}$. Now assume $I_{t_{0}}$,$%
I_{t_{1}},...I_{t_{j}}$ is a finite subcover of $[0,S]$, and $%
0=s_{0}<s_{1}<...<s_{j^{\ast }}=S$ such that $s_{i-1},s_{i}$ are in the same
open interval $I_{t_{i}}$. We define%
\begin{equation*}
c_{0}(0,S)=\sum_{i=0}^{j^{\ast }-1}c_{0}(s_{i-1},s_{i})\text{.}
\end{equation*}%
It is now straightforward to verify that the definition is independent of
the choice of $t_{i}$, and that $c_{i},d_{i}$ satisfy (A1)-(A4).

\subsection{\textbf{Proof of Lemma \protect\ref{p:monotonicity}}.}

We slightly adapt \cite{Baesens05}, Proposition 2.1. Let $C$ be the uniform
bound on $|c_{i}(t)|$, and let $\overline{w}_{i}(t)=\exp (Ct)w_{i}(t)$. Then 
$\overline{w}_{i}(t)$ is a solution of an equation of type (\ref{r:system})
with $\overline{a}_{i}=a_{i},\overline{b}_{i}=b_{i},\overline{c}_{i}=c_{i}+C$%
, thus $\overline{a}_{i},\overline{b}_{i},\overline{c}_{i}\geq 0$ satisfying
(\ref{r:conditions}). Write it shortly 
\begin{equation}
d\overline{w}/dt=\overline{A}(t)\overline{w}(t),  \label{r:eqeq}
\end{equation}
where $\overline{A}(t)$ is an infinite matrix. By standard results on
existence of solutions on Banach spaces, the solution of (\ref{r:eqeq})
exists locally in any of the norms (\ref{r:norm}) and is the limit of Picard
iterations%
\begin{equation*}
\overline{w}^{(n)}(t)=\overline{w}(0)+\int_{0}^{t}\overline{A}(s)\overline{w}%
^{(n-1)}(s)ds,
\end{equation*}%
$\overline{w}^{(0)}(t)=\overline{w}(0)$. As $\overline{A}(s)$ has
non-negative elements, we easily see that $\overline{w}(0)\geq 0$ implies $%
\overline{w}(t)\geq 0$ for $t\geq 0$, and $w_{i}(t)$ and $\overline{w}%
_{i}(t) $ have the same sign.

\section{Appendix 2:\ On zeroes of degree $\infty $}

This Appendix is dedicated to proofs of\ Lemmas \ref{l:infinite} and \ref%
{l:der2}. Assume without loss of generality that the zero of degree $\infty $
of $w(0)=u(0)-v(0)$ is at $i=1$, i.e. $w_{0}(0)\not=0$, $w_{i}(0)=0$ for all 
$i\geq 1$. We first consider the case when all $u_{i}$ are strictly monotone
on either sides of $t=0$. The proof is somewhat combinatorial:\ we consider
different alternatives for the signs of $du_{j}/dt.$

\begin{lemma}
\label{l:three}Assume all as in Lemma \ref{l:infinite}. Assume also that for
some $i_{0}\geq 1$, there exist $\delta _{j}>0$ so that $du_{j}/dt$ is not
zero on $(-\delta _{j},0)$ and $(0,\delta _{j})$ for all $j\geq i_{0}$. Then
the conclusion of Lemma \ref{l:infinite} holds.
\end{lemma}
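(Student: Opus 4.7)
The plan is a case analysis on the sign patterns of $du_j/dt$ for large $j$, combined with the pointwise asymptotics of $w(t):=u(t)-v(t)$ given by Lemma~\ref{l:infitiy}. After a reflection of the spatial index and, if needed, an interchange of $u$ and $v$, assume that the infinite-degree zero of $w(0)$ takes the form $w_0(0)>0$ and $w_j(0)=0$ for all $j\geq 1$. By Proposition~\ref{p:existence}, $w$ solves a system of type (\ref{r:system}) satisfying (\ref{r:conditions}), so Lemma~\ref{l:infitiy} yields, for each fixed $j\geq 1$,
\begin{equation*}
w_j(t)=d_j t^{j}+o(t^{j}),\qquad d_j>0.
\end{equation*}
Thus, for every individual $j\geq 1$, $w_j(t)>0$ for small $t>0$ while $\text{sgn}\,w_j(t)=(-1)^{j}$ for small $t<0$.

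\textbf{Pigeonhole on sign patterns.} For $j\geq i_0$ the hypothesis assigns a well-defined, nonzero, constant sign $\epsilon_j^{-},\epsilon_j^{+}\in\{\pm 1\}$ to $du_j/dt$ on $(-\delta_j,0)$ and $(0,\delta_j)$ respectively. Since $(\epsilon_j^-,\epsilon_j^+)$ takes only four values, an infinite subset $J\subseteq\{j\geq i_0\}$ shares a common pattern $(s^-,s^+)$. By strict monotonicity of $t\mapsto u_j(t)$ on each side of $0$ for $j\in J$, the sign of $u_j(t_0)-u_j(t_1)$ (and of $u_j(t_0)-v_j(t_1)$, using $u_j(0)=v_j(0)$ for $j\geq 1$) is completely determined by $(s^-,s^+)$ for any sufficiently small $t_0,t_1$ on prescribed sides of $0$, uniformly in $j\in J$.

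\textbf{Producing a singular zero of finite degree.} For each of the four patterns $(s^-,s^+)$ I exhibit $t_0,t_1\in(-\varepsilon,\varepsilon)$ and a finite index $m$ such that $u(t_0)-v(t_1)$ or $u(t_0)-u(t_1)$ has a singular zero of finite degree at $m$. The mechanism is common to all four cases: using the freedom to choose $u$--$u$ versus $u$--$v$ intersections and the sides of $0$ on which $t_0,t_1$ lie, arrange that the sign of the difference at $j\in J$ (fixed by monotonicity) disagrees with its sign at $j=0$ (which, by continuity and $w_0(0)>0$, is close to $+$ as $t_0,t_1\to 0$). For every admissible small $(t_0,t_1)$ the sequence then has at least one zero in the interval $[1,\min J]$; shrinking one parameter continuously to $0$ and invoking Lemma~\ref{t:zeroset} together with the continuity of $c_i,d_i$ from Lemma~\ref{t:zbe}, the finitely many regular zeros in this window must evolve, and at some critical value either two adjacent zeros coalesce (yielding a Type~II singular zero) or a transverse zero degenerates into a tangency (Type~I). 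The requirement $u(t_0)\neq u(t_1)$ in the $u$--$u$ case is automatic from strict monotonicity of $u_j$ on $J$.

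\textbf{Main obstacle.} The expansion $w_j(t)=d_j t^{j}+o(t^{j})$ from Lemma~\ref{l:infitiy} is only \emph{pointwise} in $j$: the $o(t^{j})$ remainder is not uniform, and for any fixed $t\neq 0$ one cannot directly assert that infinitely many $w_j(t)$ carry the sign predicted by $d_j$. The uniform large-$j$ control is supplied instead by the monotonicity hypothesis on $u_j$ along $J$, which holds on an entire half-neighbourhood in $t$; this is the sole ingredient that closes the case analysis. The remaining delicate point is ensuring that the singular zero produced is of \emph{finite} rather than infinite degree: this is verified by contradiction, since an infinite-degree singular zero at the new time pair would, via a further application of Lemma~\ref{l:infitiy}, dictate a large-$j$ sign pattern incompatible with $(s^-,s^+)$ already fixed on $J$. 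The sub-cases $(+,-)$ and $(-,+)$, in which $u_j$ has a local extremum at $0$ for $j\in J$, require an additional split according to whether $t_0,t_1$ lie on the same side of $0$ or on opposite sides.
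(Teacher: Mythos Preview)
Your proposal has a genuine gap, and the approach diverges from the paper's in a way that does not obviously close.

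\textbf{Misapplication of Lemma~\ref{t:zeroset} and Lemma~\ref{t:zbe}.} Your core ``shrinking'' step fixes one time (say $t_1$) and varies the other. But the family $t_0\mapsto u(t_0)-v(t_1)$ is \emph{not} a solution of a system of type~(\ref{r:system}): that reduction (Proposition~\ref{p:existence}) requires \emph{both} arguments to evolve under~(\ref{r:FK}). Hence neither the monotonicity of the zero count in Lemma~\ref{t:zeroset} nor the zero-balance machinery of Lemma~\ref{t:zbe} applies to this family. What would be valid is the elementary observation that for a continuous one-parameter family of finite sequences with all zeroes regular the zero count is locally constant; but this is not what you invoke, and even granting it you never establish that the count \emph{actually changes} along your family in the finite window.

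\textbf{The pigeonholed pattern does not control $v$.} You assert that for $j\in J$ the sign of $u_j(t_0)-v_j(t_1)$ is determined by $(s^-,s^+)$. This is false unless $t_1=0$: the hypothesis constrains only $du_j/dt$, not $dv_j/dt$, so $v_j(t_1)$ is uncontrolled for $t_1\neq 0$. If you restrict to $t_1=0$, then for the pattern $(s^-,s^+)=(-,+)$ (local minimum of $u_j$ at $0$ for $j\in J$) one gets $u_j(t_0)-v_j(0)>0$ on $J$ for \emph{both} signs of $t_0$, so there is no forced sign change against $j=0$ and your mechanism stalls. Falling back on $u(t_0)-u(t_1)$ in this case gives no control at $j=0$ either. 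Your final paragraph acknowledges extra sub-cases here but does not resolve them.

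\textbf{How the paper proceeds.} The paper does not pigeonhole on $(s^-,s^+)$. It isolates three tailored sign configurations: four \emph{consecutive} equal signs of $du_j/dt$ on the left of $0$ (Case~1), four alternating signs on the right (Case~2), and a specific mixed block $-,+,\dots,+,-$ on the left with $-,\dots,-$ on the right (Case~3); it then checks these are exhaustive. In each case the asymptotics $w_j(t)=d_jt^{j}+o(t^{j})$ pin down the sign of $u_j(t_0)-v_j(t_0)$ at a few \emph{adjacent} indices, and a single application of the intermediate value theorem to one monotone coordinate $t\mapsto u_{j}(t)$ produces an explicit $t_1$ for which the difference has a singular zero of degree~$1$ (Cases~1,~2) or a finite-degree singular zero (Case~3). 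No continuity-in-a-parameter or zero-coalescence argument is used. If you want to repair your approach, you would need to replace the appeal to Lemma~\ref{t:zeroset} by this kind of direct IVT construction, which in turn forces the finer case split the paper uses rather than the coarse four-pattern pigeonhole.
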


\begin{proof}
First note that the conclusion of Lemma \ref{l:infitiy} holds for $%
w(0)=u(0)-v(0)$, and recall that $u_{j}(0)=v_{j}(0)$ for $j\geq 1$.

\textbf{Case 1}. Assume that for some $j_{1}\geq i_{0}$, the signs of $%
du_{j_{1}}/dt,...,du_{j_{1}+3}/dt$ are $+,+,+,+$ (analogously we show $%
-,-,-,-$) for $t\in (-\delta _{j_{1}},0),...,(-\delta _{j_{1}+3},0)$. Now
because of (\ref{r:auto}), for some $\varepsilon >0\,\ $small enough, for
any $t_{0}\in (-\varepsilon ,0)$, $u_{j_{1}}(t_{0})>v_{j_{1}}(t_{0})$, $%
u_{j_{1}+1}(t_{0})<v_{j_{1}+1}(t_{0})$, $%
u_{j_{1}+2}(t_{0})>v_{j_{1}+2}(t_{0})$; or $%
u_{j_{1}+1}(t_{0})>v_{j_{1}+1}(t_{0})$, $%
u_{j_{1}+2}(t_{0})<v_{j_{1}+2}(t_{0})$, $%
u_{j_{1}+3}(t_{0})>v_{j_{1}+3}(t_{0})$. Consider without loss of generality
the first possibility and fix some $0>t_{0}>-\varepsilon $. Now find $%
0>t_{1}>t_{0}$ so that $u_{j_{1}+1}(t_{1})=v_{j_{1}+1}(t_{0})$. As by
assumptions $u_{j_{1}}(t_{1})>v_{j_{1}}(t_{0})$ and $%
u_{j_{1}+2}(t_{1})>v_{j_{1}+2}(t_{0})$, we see that $u(t_{1})-v(t_{0})$ has
a singular zero of degree $1$ at $j_{1}+1$.

\textbf{Case 2}$.$ Assume that for some $i_{0}\leq j_{1}<j_{2}<j_{3}<j_{4}$
(not necessarily consecutive), the signs of $du_{j_{1}}/dt,...,du_{j_{4}}/dt$
alternate on $(0,\delta _{j_{1}}),...,(0,\delta _{j_{4}})$, say $-,+,-,+\,\ $%
(analogously $-,+,-,+$). By (\ref{r:auto}), either $u_{j}(t)>v_{j}(t)$ for
all $j\geq i_{0}$ and all $t\in (0,\delta _{j})$, or $u_{j}(t)<v_{j}(t)$,
consider again only the first possibility. Fix $\,0<t_{0}<\varepsilon $ for $%
\varepsilon $ small enough, and then we can find $0<t_{2}<t_{0}$ so that $%
u_{j_{2}}(t_{2})=v_{j_{2}}(t_{0})$. Thus we can find $t_{2}\leq t_{1}<t_{0}$%
, as the smallest such time such that $u_{j}(t_{1})\geq v_{j}(t_{0})$ for
all $j=j_{1}+1,...,j_{3}-1_{{}}$, and then for $t=t_{1}$ at least one of the
inequalities is $=$. As by assumptions $u_{j_{1}}(t_{1})>v_{j_{1}}(t_{0})$, $%
u_{j_{3}}(t_{1})>v_{j_{3}}(t_{0})$, we see that $u(t_{1})-v(t_{0})$ has a
singular zero of finite degree.

\textbf{Case 3}. Assume that for some $j_{1}\geq i_{0}$, $k\geq 2$ the signs
of $du_{j_{1}}/dt,...,du_{j_{1}+k}/dt$ have the signs $-,+,+,...,+,-$ on $%
(-\delta _{j},0)$ and the signs $\,-,-,...,-\ $on $(0,\delta _{j})$ (or the
alternate sign case). Fix $0<t_{0}<\varepsilon $ for $\varepsilon $ small
enough, and find $-\varepsilon <t_{1}<0$ as the largest $t<0$ so that $%
u_{j}(t)\geq u_{j}(t_{0})$ for all $j=j_{1}+1,...,j_{1}+k-1$, and then for $%
\varepsilon $ small enough at least one of inequalities is $=$. As by
assumptions, $u_{j_{1}}(t_{1})>u_{j_{1}}(t_{0})$, $%
u_{j_{1}+k}(t_{1})>u_{j_{1}+k}(t_{0})$, $u(t_{1})-u(t_{0})$ has a singular
zero of a finite degree.

Now it is straightforward to check that one of the cases 1-3 (including the
alternate possibilities) must hold.
\end{proof}

\begin{proof}[\textbf{Proof of Lemma \protect\ref{l:infinite}.}]
We consider separately the DC and AC\ case. In the DC case, we use the fact
that all the results on the zero-set apply to $w=du/dt$. As this does not
necessarily hold in the AC case, we apply\ real analyticity of the solution.

Consider the DC\ case. If $du_{j}(0)/dt\not=0$, then $u_{j}$ is strongly
monotone in a neighborhood of $t=0$. Also if $du_{j}(0)/dt=0$, by Lemmas \ref%
{l:one}, \ref{l:two} and \ref{l:infitiy}, $du_{j}/dt=d_{j}t^{k}+o(t^{k})$
for some $d_{j}\not=0$ and an integer $k\geq 1$ in all cases except possibly
when $(j,0)$ is a part of a zero of Type II of an even degree of $w=du(0)/dt$%
. If $w=du(0)/dt$ has only finitely many zeroes of Type II, even degree, on $%
(i,0)$ for $i\geq 1$, we see from this discussion that for some $i\geq i_{0}$
the assumptions of Lemma \ref{l:three} hold and the proof is completed. Now
assume the contrary, and consider a zero of Type II, even degree of $%
w=du(0)/dt$, say at $(j+1,0),...,(j+k,0)$. Then $du_{j}(0)/dt$ and $%
du_{j+k+1}(0)/dt\,$have alternate, non-zero signs. As there are infinitely
many such zeroes, we take any three and obtain the Case 2 in the proof of
Lemma \ref{l:three}.

Consider now the AC case, and assume without loss of generality $F(t)$ in (%
\ref{r:FK}) is not constant (if it is, this reduces to the DC\ case). By
assumptions and Lemma \ref{l:existence}, $u_{j}(t)$ is real analytic on
every open subset of the set of times for which $u(t)\in \mathcal{X}$. Then
we have either 
\begin{equation}
du_{j}/dt=d_{j}t^{k}+o(t^{k})\text{, }k\geq 0  \label{r:kk}
\end{equation}%
or $du_{j}/dt=0$ in a neighborhood of $t=0$ (see e.g. \cite{Krantz02},
Remark 1.2.13). If for all $j\geq i_{0}$, the first possibility holds, we
can apply Lemma \ref{l:three} and complete the proof. Assume the contrary,
i.e. existence of infinitely many $j\geq 1$ so that $du_{j}/dt$ locally
vanishes around $t=0$, and discuss all the possibilities. 1) If there are
three or more consecutive vanishing $du_{j}/dt$, inserting it into the
derivative of (\ref{r:FK}) implies that $f(t)$ is locally constant, thus as
real analytic and periodic, constant everywhere, which is a contradiction.
2) Assume there are exactly two consecutive vanishing $du_{j+1}/dt$, $%
du_{j+2}/dt$, for some $t\in (-\varepsilon ,\varepsilon )$, where $%
\varepsilon >0$ is small enough such that $du_{j}/dt,du_{j+3}/dt$ have no
zeroes on $(0,\varepsilon )$ (this is possible as they satisfy (\ref{r:kk}%
)). Then for any $0<t_{0}<t_{1}<\varepsilon $, clearly $u(t_{0})-u(t_{1})$
has a zero of degree 2 at $j+1,j+2$. 3)\ Assume there is a vanishing $%
du_{j+1}/dt$ on $t\in (-\varepsilon ,\varepsilon )$ such that $du_{j}/dt$
and $du_{j+2}/dt$ have no zeroes for $t\in (0,\varepsilon )$ and have the
same sign. Then analogously for any $0<t_{0}<t_{1}<\varepsilon $, $%
u(t_{0})-u(t_{1})$ has a singular zero of degree 1 at $j+1$. 4) Finally,
consider infinitely many vanishing $du_{j}/dt$ so that the signs of $%
du_{j-1}/dt$, $du_{j+1}/dt$ are not zero and alternate for $t\in (0,\delta
_{j})$. But this is the case 2 from the proof of Lemma \ref{l:three}. As we
have exhausted all the possibilities, the proof is completed.
\end{proof}

\begin{proof}[\textbf{Proof of Lemma \protect\ref{l:der2}.}]
We can apply Lemma \ref{l:infitiy} to $w=du/dt$ and immediately see that the
Case 3 of the proof of Lemma \ref{l:three} holds. So the conclusion of the
case 3 must hold, which completes the proof.
\end{proof}

\end{document}